\documentclass[11pt]{amsart}
\usepackage{amsmath,amsthm,amscd,amssymb, color}
\usepackage{latexsym}
\usepackage[colorlinks, citecolor = blue,backref]{hyperref}
\usepackage[alphabetic]{amsrefs}
\usepackage[capitalize, nameinlink]{cleveref}
\usepackage{enumerate}
\usepackage[margin=1.333in]{geometry}
\usepackage{pgfplots}
\usepackage{tkz-graph}
\usepackage[font=footnotesize]{caption}

\newcommand{\R}{\mathbb R}

\newcommand{\Q}{\mathbb Q}
\newcommand{\C}{\mathbb C}
\newcommand{\Z}{\mathbb Z}

\renewcommand{\phi}{\varphi}
\newcommand{\eps}{\varepsilon}

\newcommand{\frako}{\mathfrak o}
\newcommand{\calO}{\mathcal O}

\newcommand{\calF}{\mathcal F}
\newcommand{\calI}{\mathcal I}
\newcommand{\calJ}{\mathcal J}
\newcommand{\A}{\mathbb A}
\newcommand{\bs}{\backslash}
\newcommand{\bmx}{\left( \begin{matrix}}
\newcommand{\emx}{\end{matrix} \right)}
\newcommand{\Cl}{\mathrm{Cl}}
\newcommand{\Ram}{\mathrm{Ram}}

\newcommand{\new}{\mathrm{new}}
\newcommand{\odd}{\mathrm{odd}}

\renewcommand{\mod}{\bmod}
\newcommand{\Sq}{\mathrm{Sq}}
\newcommand{\Orb}{\mathrm{Orb}}
\newcommand{\Gal}{\mathrm{Gal}}

\usepackage{bbm}
\newcommand{\one}{\mathbbm{1}}

\newcommand{\norm}[1]{\left\lVert#1\right\rVert_\infty}
\newcommand{\leg}{\overwithdelims ()}

\DeclareMathOperator{\GL}{GL}

\DeclareMathOperator{\tr}{tr} 
 
\DeclareMathOperator{\Hom}{Hom} 

\newtheorem{lem}{Lemma}
\numberwithin{lem}{section}
\newtheorem{prop}[lem]{Proposition}
\newtheorem{thm}[lem]{Theorem}

\newtheorem{cor}[lem]{Corollary}
\newtheorem{conj}[lem]{Conjecture}
\crefname{figure}{Figure}{Figures}
\crefname{conj}{Conjecture}{Conjectures}

\theoremstyle{remark}
\newtheorem{rem}[lem]{Remark}

\theoremstyle{definition}
\newtheorem{example}[lem]{Example}

\numberwithin{equation}{section}

\pagestyle{plain}

\begin{document}

\title{Zeroes of quaternionic modular forms and central $L$-values}
\author{Kimball Martin}
\email{kimball.martin@ou.edu}
\address{Department of Mathematics, University of Oklahoma, Norman, OK 73019 USA}

\author{Jordan Wiebe}
\email{jwiebe@ou.edu}

\date{\today}

\maketitle

\begin{abstract}
Values of quaternionic modular forms are related to twisted
central $L$-values via periods and a theorem of Waldspurger.  
In particular, certain twisted $L$-values must be non-vanishing for
forms with no zeroes.  Here we study, theoretically and
computationally, zeroes of definite quaternionic
modular forms of trivial weight.
Local sign conditions force certain forms to have trivial
zeroes, but we conjecture that almost all forms have no nontrivial
zeroes.  In particular, almost all forms with appropriate local signs
should have no zeroes.  
We show these conjectures follow from a conjecture on the
average number of Galois orbits, and give applications  
to (non)vanishing of $L$-values.
\end{abstract}


\section{Introduction}


\subsection{Motivation}
Let $S_k(N)$ denote the space of weight $k$
elliptic cusp forms of level $\Gamma_0(N)$, and let
$S^\new_k(N)$ denote the subspace of newforms.
Let $B = B_N$ a definite quaternion algebra of discriminant $N$,
and $\calO = \calO_N$ a maximal order of $B$.
Necessarily, $N$ is a squarefree
product of an odd number of primes.  Let $\Cl(\calO)$ be the
set of right $\calO$-ideal classes in $B$, which is a finite set.

The space of trivial weight modular forms of level $\calO$ on $B$
is simply the space of functions $M(\calO) = \{ \phi : \Cl(\calO) \to \C \}$.
The subspace of cusp forms $S(\calO)$ can be defined as the 
codimension 1 subspace of forms orthogonal to the constant functions,
with respect to the Petersson inner product.
There is a well-known action of Hecke operators, and the
Eichler--Shimizu--Jacquet--Langlands correspondence gives a
Hecke isomorphism of $S(\calO)$ with $S^\new_2(N)$.
In particular, there is a 1-1 correspondence of Hecke eigenforms 
$\phi \in S(\calO)$ (modulo scalars) with newforms $f \in S_2(N)$.

A formula of Waldspurger \cite{wald} relates
certain twisted $L$-values to certain periods on $B$.   Namely,
if  $\phi$ and $f$ correspond as above, 
central $L$-values associated to certain twists of $f$ are nonzero if and 
only if certain linear combinations of values of $\phi$ called periods are nonzero.
Thus a natural approach to studying the vanishing of 
$L$-values is to study both (i) values of quaternionic
modular (eigen)forms $\phi$, and (ii) periods on $B$.
While periods have often been used to study $L$-values (e.g., see \cite{jacquet-chen},
\cite{michel-venkatesh}), we understand relatively little about the values of
quaternionic eigenforms.

The present paper represents one step in an attempt to
understand (i) and (ii), but especially values of quaternionic
modular forms. Specifically, we focus on the question of how often
values of quaternionic eigenforms are zero, and what this implies
about non-vanishing $L$-values.  In particular, let $K=\Q(\sqrt{-D})$ be the
imaginary quadratic field of discriminant $-D$ and $\eta_D(n) = {-D \leg n}$
the associated quadratic Dirichlet character.  We show that if $\phi$ is zero-free
and either (1) $K$ has class number 1 with $\eta_D(p) \ne 1$ for all $p | N$,
or (2) $K$ has one class per genus and $D | N$,
then $L(\frac 12, f) L(\frac 12, f \otimes \eta_D) \ne 0$ (\cref{thm:16}). 

Moreover, we conjecture that $\phi$ is zero-free 100\% of the time that certain necessary 
conditions on the Atkin--Lehner signs $w_p$ of $f$ are satisfied---e.g., if 
$w_p = -1$ for all $p | N$ 
(see \cref{conj11}).  Assuming these sign conditions hold, \cref{thm:nozeroes}
implies that $\phi$ is zero-free if the Galois conjugates of $f$ span the 
Atkin--Lehner eigenspace of $S_2^\new(N)$ for the signs $w_p$---something
that was conjectured to hold 100\% of the time in \cite{me:maeda}.
While we do not have other simple criteria to guarantee that $\phi$ is zero-free, it is
something that one can check computationally.  

We note that theorems which guarantee the nonvanishing of 
\emph{specific} central $L$-values typically rely on special arithmetic information like 
the properties of certain elliptic curves or the existence of an Eisenstein series
congruent to $f$.  Our emphasis is that knowledge of the values of quaternionic 
modular forms offers a different method to obtain nonvanishing of explicit twists.
This idea was also exploited in \cite{me:cong}, which used information on values of
quaternionic modular forms to construct Eisenstein congruences, and then apply this
to $L$-values.  In fact, if $f$ satisfies
Eisenstein congruence, then under some conditions one can conclude
 $\phi$ is zero-free (see \cite[Remark 2.4]{me:cong}).

Next we describe the contents in more detail.
We remark that much of what we say extends to Eichler and even more general 
orders.  However, for simplicity we restrict to maximal orders for the 
introduction and the bulk of the paper.

\subsection{Zeroes of quaternionic modular forms}

First, we note that $S(\calO)$ breaks up into Hecke-invariant
subspaces which are the analogue of Atkin--Lehner eigenspaces for
$S_2^\new(N)$.  Namely, for each $p | N$, $T_p$ acts as an involution
on $S(\calO)$, so each eigenform $\phi \in S(\calO)$ has
a collection of local signs $\eps_p(\phi) = \pm 1$ for $p | N$, which are the 
$T_p$-eigenvalues of $\phi$.  These signs are the opposite of the
Atkin--Lehner signs $w_p$.

In \cite{me:cong2}, we observed that the local signs of $\phi$ 
force $\phi$ to be 0 on certain ideal classes.  We call such zeroes
trivial zeroes, and study them in \cref{sec:triv0}.
In particular, if the product of all local signs is $-1$, which corresponds to
$\phi$ having root number $-1$, then one necessarily has trivial zeroes.
We estimate the number of trivial zeroes, and give a precise formula
in the case of prime level.  If all signs are $+1$,
then there are no trivial zeroes.  If $N$ is prime, so that there is only one
local sign, we thus have a complete answer to when and how many
trivial zeroes there are for any eigenform.
When $N$ is not prime, the situation is more complicated but we give
criteria for sign patterns to induce trivial zeroes.
See \cref{prop36} for necessary and sufficient criteria when $N$ is odd.
 
Using a combination of numerics and heuristics, we make the
following conjectures.
Let $\Sq_r$ denote the positive squarefree
integers with exactly $r \ge 1$ prime factors, and 
$\Sq_r(X) = \{ N \in \Sq_r : N < X \}$.  For each $N \in \Sq_\odd : = \bigcup_{r \, \odd} \Sq_r$, we choose a maximal order $\calO_N$ in the definite
quaternion algebra $B_N$ of discriminant $N$.  For each $N$,
there are a finite
number of choices for $\calO_N$ up to isomorphism, but this
choice does not affect the values of the eigenforms in $S(\calO_N)$
(see \cref{lem1}).

\begin{conj}\label{conj11} Fix $r$ odd.  Consider the collection of all eigenforms 
(modulo scalars) which lie in some $S(\calO_N)$ with $N \in \Sq_r$, partially 
ordered by level. Then we have:

\begin{enumerate}[(i)]
\item $100\%$ of the zeroes of these eigenforms are trivial zeroes; and 

\item $100\%$ of these eigenforms have no trivial zeroes.
\end{enumerate}
\end{conj}

Note $r=1$ here corresponds to restricting to prime levels.  One can 
also formulate a version without fixing $r$, i.e., for $N \in \Sq_\odd$.

For a given eigenform $\phi$, its Hecke eigenvalues generate
a number field $K_\phi$, which equals the rationality field $K_f$ of the
associated newform $f \in S_2(N)$, i.e., the number field generated by the
Fourier coefficients of $f$.  Define the \emph{(rationality) degree} of the
$\phi$ (and also $f$) to be the degree $[K_\phi : \Q]=[K_f:\Q]$ of the rationality field.

\begin{conj}[Rough form] \label{conj12} Quaternionic eigenforms are
more likely to have nontrivial zeroes if they have smaller degree.
\end{conj}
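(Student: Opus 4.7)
The plan is to first make the rough conjecture precise, then reduce it to a combination of a Galois-orbit statistics heuristic and Waldspurger-type control of $L$-values. To make the statement precise, I would first observe that vanishing of $\phi$ at an ideal class is Galois-invariant: if $\phi(x) = 0$ and $\sigma \in \Gal(\overline\Q/\Q)$, then the Galois conjugate $\phi^\sigma$ (an eigenform of the same degree) also vanishes at $x$. So ``having a nontrivial zero'' depends only on the Galois orbit. Let $\Orb_d(\calO_N)$ denote the set of Galois orbits in $S(\calO_N)$ of degree exactly $d$, and let $Z_d(\calO_N) \subseteq \Orb_d(\calO_N)$ be the subset having a nontrivial zero. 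A precise version of the conjecture would assert that $|Z_d(\calO_N)|/|\Orb_d(\calO_N)|$, averaged over $N \in \Sq_r(X)$ as $X \to \infty$, is a decreasing function of $d$ and tends to $0$ as $d \to \infty$.

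The heuristic justification has two complementary strands. First, a nonzero algebraic integer $\alpha \in K_\phi$ of degree $d$ satisfies $\prod_\sigma |\alpha^\sigma| \ge 1$, so the vanishing condition $\alpha = 0$ corresponds in a naive random model to all $d$ archimedean embeddings being small simultaneously, a codimension-$d$ event whose probability should decrease with $d$. Second, via Waldspurger's formula, vanishing of suitable linear combinations of values of $\phi$ is equivalent to vanishing of central twisted $L$-values $L(1/2, f \otimes \chi)$ for $f$ the newform attached to $\phi$; known and expected non-vanishing statistics for such $L$-values exclude systematic vanishing except in arithmetically special situations (CM forms, forms with inner twists, and similar anomalies), which are concentrated disproportionately at small degree. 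These two effects reinforce each other and predict the degree-dependence claimed.

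To make this rigorous, I would try to reduce the precise form of the conjecture to the conjecture on the average number of Galois orbits already promised in the introduction, combined with a quantitative form of the random-algebraic-integer heuristic above. The main obstacle is that lower bounds on central twisted $L$-values in the appropriate families are conditional on deep analytic input, and the distribution of Hecke-field degrees in $S(\calO_N)$ is essentially governed by a Maeda-type conjecture that is currently out of reach; hence an unconditional proof seems unlikely. As a reasonable fallback I would pair numerical experiments on the prime-level case (where the trivial-zero picture from \cref{prop36} is exact) with the orbit-count heuristic, using the data to calibrate a conjecturally sharp rate of decay for $|Z_d(\calO_N)|/|\Orb_d(\calO_N)|$ in $d$ and to test whether the smallest-degree orbits really do dominate the nontrivial zeroes.
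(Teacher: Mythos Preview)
This statement is a conjecture, so neither you nor the paper gives a proof; the relevant comparison is between your precise formulation and heuristics and the paper's.

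Your precise version (the averaged ratio $|Z_d(\calO_N)|/|\Orb_d(\calO_N)|$ is decreasing in $d$) differs from the paper's. The paper sharpens \cref{conj12} to \cref{conj:deg1}: as $N\to\infty$, $100\%$ of nontrivial zeroes come from degree~$1$ forms (with an analogous remark for each cutoff $d$). These formulations are related but not equivalent, and the paper's version is supported directly by the data in \cref{tab:deg-hist}.

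On heuristics, your first strand (vanishing is a ``codimension-$d$'' event) is close in spirit to the paper's argument, but the paper makes it more concrete: it embeds a primitive integral eigenform into the lattice $\frako_{K_\phi}^h\subset(\R^d)^h$, models each value $\phi(x_i)$ as a lattice point drawn from an approximately $d$-dimensional normal distribution, and argues that the probability of hitting $0$ decreases with $d$ because the number of available lattice points in a ball of given radius grows with $d$ (see \cref{sec52,sec:vals}). Your second strand via Waldspurger is misdirected for this conjecture: Waldspurger relates \emph{periods} $P_{K,\chi}(\phi)$ (sums of values over the image of $\Cl(K)$) to $L$-values, not individual values $\phi(x_i)$. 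Only in the very special case $h_K=1$ does a period reduce to a single value, so $L$-value nonvanishing statistics give no direct control on whether $\phi(x_i)=0$ at a generic ideal class. The paper in fact runs the implication in the opposite direction, using zero-freeness of $\phi$ to deduce $L$-value nonvanishing (\cref{sec:Lvals}).

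Finally, you correctly observe that zeroes are Galois-invariant, but you stop short of exploiting it. The paper uses exactly this observation to prove the only rigorous evidence for \cref{conj12}: since the $d$ Galois conjugates of $\phi$ share the same support and span a $d$-dimensional subspace of $M^\eps(\calO)$, a degree-$d$ eigenform can have at most $\#\calF_\eps-d$ zeroes on the fundamental domain (\cref{prop42}, \cref{cor43}). This is the key missing ingredient in your plan.
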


See \cref{conj:nnontriv,conj:deg1}, and \cref{rem:deg2} for more precise
forms of \cref{conj11}(i) and \cref{conj12}.  Our heuristics also lead
us to briefly consider the distribution of values of quaternionic eigenforms
in \cref{sec:vals}.

As evidence for \cref{conj12}, we show that eigenforms with large
degree cannot have too many zeroes (\cref{cor43}).
The idea of the proof is very simple---it uses the fact that
$\phi$ and its Galois conjugates have zeroes at the same locations,
and they span a $d$-dimensional subspace of $S(\calO_N)$.
Then one uses the description of the subspaces of $S(\calO_N)$ with
fixed local signs.  
This result also provides evidence for \cref{conj11}, under some hypotheses 
about the sizes of Galois orbits.
Namely, in \cite{me:maeda}, the first author made the following conjecture.

\begin{conj} \label{gal-conj}   Fix $k \in 2\Z_{>0}$ and $r \in \Z_{>0}$.
The average number of Galois orbits of $S_k^\new(N)$ for $N \in \Sq_r$
is $2^r$, i.e., 
\[ \lim_{X \to \infty} \frac {\sum_{N \in \Sq_r(X)} \# \Orb(S_k^\new(N))}
{\# \Sq_r(X) } = 2^r. \]
Here $\# \Orb(S_k^\new(N))$ denotes the number of Galois orbits of newforms
in $S_k(N)$.
\end{conj}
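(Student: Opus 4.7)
The plan is to reduce \cref{gal-conj} to a refined Maeda-type statement, one attached to each Atkin--Lehner sign pattern, and then to analyze irreducibility of Hecke characteristic polynomials on each piece.

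First, I would exploit the Atkin--Lehner decomposition. For squarefree $N$ with $r$ prime factors, the involutions $w_p$ for $p \mid N$ commute with all $T_\ell$ with $\ell \nmid N$, and simultaneously diagonalize $S_k^\new(N)$, giving
\[ S_k^\new(N) = \bigoplus_{\eps \in \{\pm 1\}^r} S_k^\new(N)^\eps. \]
Because the $w_p$-eigenvalues are rational, each Galois orbit of eigenforms is contained in a single summand, so $\# \Orb(S_k^\new(N)) = \sum_\eps \# \Orb(S_k^\new(N)^\eps)$. Moreover, for $N$ large, a dimension count (using Eichler--Selberg) shows every summand is nonzero, yielding the easy lower bound $\# \Orb(S_k^\new(N)) \geq 2^r$ for all but finitely many $N \in \Sq_r$. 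Thus \cref{gal-conj} reduces to the matching upper bound, equivalently the statement that on average each of the $2^r$ Atkin--Lehner summands is a single Galois orbit.

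Next, I would attempt the upper bound via an irreducibility criterion for Hecke polynomials. Fix an auxiliary prime $\ell$; let $\chi_{\ell,\eps}(x) \in \Z[x]$ denote the characteristic polynomial of $T_\ell$ on $S_k^\new(N)^\eps$. If $\chi_{\ell,\eps}$ is irreducible, then $S_k^\new(N)^\eps$ is a single Galois orbit. The approach is to show that, for a density-one subset of $N \in \Sq_r$, one can choose $\ell$ (possibly depending on $N$) such that $\chi_{\ell,\eps}$ is irreducible for every $\eps$. The tools here are (a) horizontal Sato--Tate equidistribution of $T_\ell$-eigenvalues as $N$ varies in $\Sq_r$, to control the distribution of roots of $\chi_{\ell,\eps}$, and (b) mod-$p$ factorization and Newton-polygon criteria (as employed in Maeda-style numerical work) which force irreducibility over $\Z$ from irreducibility modulo a well-chosen auxiliary prime. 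Heuristically, an ``integer polynomial of growing degree whose roots equidistribute according to Sato--Tate'' should be irreducible with probability tending to $1$, and establishing this in a density-one sense over $\Sq_r(X)$ would close the argument.

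The main obstacle is precisely this irreducibility claim, which is a substantial strengthening of Maeda's conjecture: the classical case $N = 1$ is already open, and here one needs simultaneous irreducibility on all $2^r$ Atkin--Lehner summands, for a density-one family of squarefree $N$. A potentially more tractable intermediate step is to establish the average upper bound
\[ \limsup_{X \to \infty} \frac{1}{\#\Sq_r(X)} \sum_{N \in \Sq_r(X)} \# \Orb\bigl(S_k^\new(N)\bigr) \leq 2^r \]
directly, bypassing pointwise irreducibility: one could try to bound the expected number of irreducible factors of $\chi_{\ell,\eps}$ via a large-sieve or Chebotarev-type argument applied to the trace formula for $T_\ell^n$ averaged over $N$. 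Even this weaker goal seems to require new equidistribution inputs, and bridging from statistical statements about eigenvalue distributions to the algebraic statement that a specific $\Q$-algebra is a field appears to be the essential difficulty.
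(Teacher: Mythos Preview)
The statement you are trying to prove is a \emph{conjecture}, not a theorem: the paper does not prove it, and indeed it is explicitly attributed to \cite{me:maeda} as an open conjecture (a Maeda-type statement in the level aspect). So there is no ``paper's own proof'' to compare against.

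Your outline is a reasonable heuristic explanation of why one should believe the conjecture, and you correctly isolate the two halves. The Atkin--Lehner decomposition does give $\#\Orb(S_k^\new(N)) \ge 2^r$ for all but finitely many $N \in \Sq_r$, exactly as you say, and this lower bound is genuinely easy. The entire content of the conjecture is the matching upper bound, and here you are honest that your proposed mechanism---irreducibility of $\chi_{\ell,\eps}$ on each Atkin--Lehner piece for a density-one set of $N$---is a strengthening of Maeda's conjecture, which is already open for $N=1$. The ``more tractable intermediate step'' you suggest (bounding the average number of irreducible factors via large sieve or trace formula) is an interesting idea, but as you note, passing from eigenvalue statistics to the algebraic statement that a Hecke algebra is a field is exactly the gap no one knows how to bridge. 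So what you have written is not a proof but a restatement of the conjecture in equivalent language, together with a candid acknowledgment that the key step is open. That is the correct assessment of the situation; just be aware that the paper is not claiming to prove this either.
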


Then the following is a simple consequence of \cref{thm:nozeroes}.

\begin{thm} \label{thm14}
\cref{gal-conj} (for $k=2$) implies \cref{conj11}(ii).
\end{thm}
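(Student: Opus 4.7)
The plan is to reduce to the geometric argument behind \cref{cor43}: if the Galois conjugates of an eigenform $\phi \in S(\calO_N)^{\eps}$ span the whole sign-pattern subspace $S(\calO_N)^{\eps}$, then $\phi$ cannot vanish at any ideal class outside the trivially-forced set $T(\eps)$. Under \cref{gal-conj} this spanning property holds for every eigenform in 100\% of levels, which will yield \cref{conj11}(ii).

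First I would recall the decomposition $S(\calO_N) = \bigoplus_{\eps \in \{\pm 1\}^r} S(\calO_N)^{\eps}$ and note that each Galois orbit of Hecke eigenforms lies in a single $S(\calO_N)^{\eps}$, since the Atkin--Lehner eigenvalues $\eps_p(\phi) \in \{\pm 1\}$ are Galois invariant. If $\phi \in S(\calO_N)^{\eps}$ has degree $d$, its $d$ conjugates $\phi^\sigma$ are linearly independent elements of $S(\calO_N)^{\eps}$ which vanish at exactly the same ideal classes.

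Next, I would deduce from \cref{gal-conj} (for $k=2$) that, for a subset $\calF \subseteq \Sq_r$ of natural density one, each of the $2^r$ sign-pattern subspaces of $S(\calO_N)$ contains exactly one Galois orbit. The conjecture gives average number of orbits equal to $2^r$. On the other hand, for $N$ sufficiently large every sign-pattern subspace is nonzero---standard dimension bounds give $\dim S(\calO_N)^{\eps} \sim h_N / 2^r$ uniformly in $\eps$---so the number of orbits in $S_2^{\new}(N)$ is at least $2^r$. A nonnegative integer-valued sequence with average $2^r$ and pointwise lower bound $2^r$ must equal $2^r$ on a density-one set $\calF$, and for $N \in \calF$ each sign-pattern subspace is filled by a single Galois orbit of degree $d_{\eps} := \dim S(\calO_N)^{\eps}$.

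To conclude, for $N \in \calF$ and $\phi \in S(\calO_N)^{\eps}$, suppose $\phi(I) = 0$ for some $I \notin T(\eps)$. Then all conjugates $\phi^\sigma$ also vanish at $I$, so $S(\calO_N)^{\eps}$ (being the span of these conjugates) lies inside the proper subspace $\{f \in S(\calO_N)^{\eps} : f(I) = 0\}$, contradicting $I \notin T(\eps)$. Hence every zero of $\phi$ is trivial. Since the density-zero complement $\Sq_r \setminus \calF$ satisfies $\sum h_N = o\bigl(\sum_{N \in \Sq_r(X)} h_N\bigr)$ by the trivial bound $h_N \leq C N$ together with density zero in $\Sq_r$, the contribution of its eigenforms to the count is negligible, yielding \cref{conj11}(ii). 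The main obstacle in this plan is the uniform lower bound $\dim S(\calO_N)^{\eps} \geq 1$ for all $\eps$ and $N$ large (equivalently $\#\Orb(S_2^{\new}(N)) \geq 2^r$): this should follow from the dimension formulas implicit in \cref{prop36}, but requires care when multiple primes divide $N$ so that all $2^r$ individual sign-pattern dimensions are controlled simultaneously.
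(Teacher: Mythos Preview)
Your approach is the same as the paper's: reduce to the single-Galois-orbit-per-eigenspace hypothesis via \cref{gal-conj}, then apply the Galois-conjugate spanning argument. The paper packages the latter as \cref{thm:nozeroes} and declares \cref{thm14} an immediate consequence.

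There is one genuine gap in your step 4. For the sign pattern $\eps = +_N$, the trivial-zero set is defined via $M^{+_N}(\calO) = \C\one \oplus S^{+_N}(\calO)$, and since $\one$ never vanishes one has $T(+_N) = \emptyset$. Thus ``$I \notin T(+_N)$'' is vacuous and gives you no reason that $\{f \in S^{+_N}(\calO) : f(I) = 0\}$ is a \emph{proper} subspace of $S^{+_N}(\calO)$: the witness $f$ with $f(I) \ne 0$ guaranteed by $I \notin T(+_N)$ might be $\one$, which is not cuspidal. The paper treats this case separately in the proof of \cref{thm:nozeroes} by explicitly constructing a zero-free cusp form in $S^{+_N}(\calO)$: any $\sum_j a_j 1_{X_j}$ with all $a_j \ne 0$ satisfying the single linear cuspidality constraint works, and such $a_j$ exist once there are at least two $\mathrm{Pic}(\calO)$-orbits (equivalently, once $\dim S^{+_N}(\calO) \ge 1$). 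With this in hand your argument goes through unchanged.

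Separately, what you flag as the main obstacle---$\dim S(\calO_N)^\eps \ge 1$ for all $\eps$ and large $N$---is not actually one: the refined dimension formulas of \cite{me:dim} give $\dim M^\eps(\calO_N) \sim \frac{\phi(N)}{12 \cdot 2^r}$ uniformly over all $2^r$ sign patterns (this is quoted at the start of \cref{sec52}), so every eigenspace is eventually nonzero and the lower bound $\#\Orb(S_2^\new(N)) \ge 2^r$ holds for all large $N \in \Sq_r$.
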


\subsection{Nonvanishing of $L$-values}
A well-known theorem
of Waldspurger \cite{wald} relates twisted central $L$-values
to periods on quaternion algebras.  In special situations, we can
use the nonvanishing of values of quaternionic modular forms to
deduce nonvanishing of certain periods, and thus of certain $L$-values.

Let $K=\Q(\sqrt{-D})$ be the imaginary quadratic field of discriminant
$-D$.  Denote by $\Cl(K)$ its ideal class group, $\eta_{D}(n) = {-D \leg n}$,
and $\phi_K$ the base change 
of an eigenform $\phi$ to $K$.
For a quaternionic or elliptic modular eigenform $\phi$ and a character $\chi$ defined
over the same field, let $L(s, \phi \otimes \chi)$
denote the twisted automorphic $L$-function normalized so that $s= \frac 12$
is the central point.  In particular, if $\chi$ is the trivial character of $\Cl(K)$,
then $L(s, \phi_K \otimes \chi) = L(s, \phi_K)= L(s, \phi) L(s, \phi \otimes \eta_{D})$.

Let $B=B_N$.  Then $K$ embeds in $B$ if and only if $\eta_{D}(p) \ne 1$
for each $p | N$.  We may choose a maximal order $\calO_N$ of $B$
and an embedding of $K$ into $B$ such that
$\frako_K$, the ring of integers of $K$, embeds into in $\calO_N$.  
This leads to an ideal class map of sets:
$\Cl(K) \to \Cl(\calO_N)$.  We study properties of this map in \cref{sec61}.

Now for a character $\chi$ of
$\Cl(K)$ and $\phi \in S(\calO_N)$, 
we define a period $P_{K,\chi}(\phi)$ which is
a sum of $h_K$ (not necessarily distinct) values of $\phi$
in terms of this ideal class map.  Assuming $\phi$ is an eigenform,
\cite{wald} tells us that 
$P_{K,\chi}(\phi) \ne 0$ implies $L(\frac 12, \phi_K \otimes \chi) \ne 0$.
Under certain local conditions the converse is also true.

We say an eigenform $\phi \in S(\calO)$ is zero-free if $\phi(x) \ne 0$ for all $x \in \Cl(\calO)$.  From \cref{conj11}, we expect that 100\% of eigenforms $\phi$
with appropriate local sign conditions are zero-free.
It is immediate that if $h_K = 1$ and $\phi$ is zero-free, then $P_{K,1}(\phi) 
\ne 0$.  More generally, orthogonality of characters implies the following.

\begin{thm}[see \cref{thm:existchi}] \label{thm:15}
Suppose $N \in \Sq_\odd$, and
let $K$ be a quadratic field embedding in $B_N$.
Assume $\phi \in S(\calO_N)$ is zero-free.  
Then there exists a character $\chi$ of $\Cl(K)$ such that
$L(\frac 12, \phi_K \otimes \chi) \ne 0$.
\end{thm}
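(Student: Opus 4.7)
The plan is to apply orthogonality of characters on the finite abelian group $\Cl(K)$ to the function $\chi \mapsto P_{K,\chi}(\phi)$, and then to invoke Waldspurger's theorem in the (unconditional) direction quoted in the introduction. Note that, since zero-freeness is defined only for eigenforms, $\phi$ is implicitly an eigenform here, so that Waldspurger's theorem does apply.

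Let $\iota \colon \Cl(K) \to \Cl(\calO_N)$ be the ideal class map from \cref{sec61}, so that by definition
$$P_{K,\chi}(\phi) = \sum_{[\mathfrak{a}] \in \Cl(K)} \chi([\mathfrak{a}])\, \phi(\iota([\mathfrak{a}])).$$
Thus $\chi \mapsto P_{K,\chi}(\phi)$ is, up to normalization, the discrete Fourier transform on $\Cl(K)$ of the function $[\mathfrak{a}] \mapsto \phi(\iota([\mathfrak{a}]))$. I would expand $|P_{K,\chi}(\phi)|^2$ as a double sum over $\Cl(K) \times \Cl(K)$, sum over all characters $\chi$ of $\Cl(K)$, and then apply the orthogonality relation $\sum_\chi \chi([\mathfrak{a}])\overline{\chi([\mathfrak{b}])} = h_K \cdot \one_{[\mathfrak{a}] = [\mathfrak{b}]}$ to obtain
$$\sum_{\chi} |P_{K,\chi}(\phi)|^2 = h_K \sum_{[\mathfrak{a}] \in \Cl(K)} |\phi(\iota([\mathfrak{a}]))|^2.$$

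Because $\phi$ is zero-free, every value $\phi(\iota([\mathfrak{a}]))$ is nonzero, and so the right-hand side is strictly positive. Hence at least one character $\chi$ of $\Cl(K)$ satisfies $P_{K,\chi}(\phi) \ne 0$, and the easy direction of Waldspurger's formula then yields $L(\tfrac 12, \phi_K \otimes \chi) \ne 0$. There is no real obstacle: the only points to double-check are that the explicit formula for $P_{K,\chi}(\phi)$ is indeed the one established in \cref{sec61}, and that we invoke Waldspurger in the direction that does not require any local hypotheses beyond the existence of the embedding $K \hookrightarrow B_N$ (which is part of the hypothesis). The whole proof amounts to a one-line character orthogonality argument once the setup from \cref{sec61} is in hand.
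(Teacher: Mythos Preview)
Your proof is correct and uses essentially the same orthogonality-of-characters argument as the paper. The only cosmetic difference is that the paper sums the periods $P_{K,\chi}(\phi)$ linearly over $\chi$ to obtain $h_K\,\phi(1)$ (rather than summing $|P_{K,\chi}(\phi)|^2$ via Plancherel), which is why the version actually proved as \cref{thm:existchi} needs only $\phi(1)\ne 0$ rather than the full zero-free hypothesis.
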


We note that for any $\phi$ with root number $+1$,
the conclusion follows for $K$ of sufficiently large discriminant
via equidistribution of the ideal class maps (see \cite{michel},
\cite{michel-venkatesh}).  
However, our emphasis is that the above is valid for any $K$
embedding in $B_N$, which lets 
us conclude nonvanishing for specific $K$.

The most interesting case is when $\chi=1$, as then one can
conclude both $L(\frac 12, \phi) \ne 0$ and the quadratic Dirichlet twist
$L(\frac 12, \phi \otimes \eta_D) \ne 0$.  Using properties of the
ideal class map, we can show the following.
Let us call an imaginary quadratic $K=\Q(\sqrt{-D})$ special for $B$ if either 
(i) $h_K = 1$ and $K$ embeds in $B$,
or (ii) $K$ has one class per genus (i.e., the class group has exponent 2) 
and $p | D$ implies $p | N$.  (The latter condition implies $K$ embeds in $B$.)
E.g., the class number 2 field $K=\Q(\sqrt{-5})$ is special for any $B_N$ such
that $10 | N$.

\begin{thm}[see \cref{thm:1cpg}] \label{thm:16}
Suppose $N \in \Sq_\odd$ and $\phi \in S(\calO_N)$ is zero-free.  Then 
$L(\frac 12, \phi_K) \ne 0$ for all $K$ which are special for $B_N$.
\end{thm}

While there is no apparent way of expressing the condition that
$\phi \in S(\calO_N)$ is zero-free directly in terms of the associated newform
$f \in S_2(N)$, we briefly
explain two ways one can computationally verify this from the perspective of
classical modular forms.
First, suppose $p \nmid N$ is such that $T_p$ acts on $S_2^\new(N)$ with distinct Hecke
eigenvalues.  One can realize $T_p$ as a Brandt matrix, and $\phi$ as an eigenvector
of this matrix for the eigevalue $a_p(f)$.  So $\phi$ being zero-free means that an 
eigenvector of a Brandt matrix has no zero entries.  This is the direct, computational
approached we used to generate our data.

Here is another situation in which one can check $\phi$ is zero-free using some of our
results below.  Suppose the Atkin--Lehner eigenvalues $w_p(f) = -1$ for all $p | N$,
or that the dimension of the minimal Atkin--Lehner eigenspace containing $f$ is one 
more than the dimension of the Atkin--Lehner eigenspace with all Atkin--Lehner eigenvalues $w_p = -1$.  If the Galois conjugates of
$f$ generate the whole Atkin--Lehner eigenspace, then $\phi$ is zero-free by
\cref{thm:nozeroes} and the condition in \eqref{notriv0cond}.  
In either of these situations, \cref{thm:16} tell us that
$L(\frac 12, f)L(\frac 12, f \otimes \eta_K) \ne 0$ for any special $K$ embedding in $B$.

Note that there are only finitely many $K$ which are special for a given $B$.
In fact, there are at most 66 one-class-per-genus imaginary quadratic fields
\cite{weinberger}, so the collection of $K$ to which \cref{thm:16} applies is rather
small.  However, this still may be of some interest as 
there are very few general results about nonvanishing of
specific twists of specific forms which do not use special arithmetic
constructions such as Eisenstein congruences.  This is somewhat general
in the sense that, for any given one-class-per-genus $K$, there are infinitely 
many $B$ for which $K$ is special.  The analytic rank of $\phi$ is 
defined to be the order of vanishing of $L(s, \phi)$ at $s = \frac 12$.
If $B$ is such that there exists
a special $K$ for $B$ (which holds for more than 99\% of prime discriminant
$B$), then $\phi$ being zero-free is a sufficient condition for having analytic
rank 0 (i.e., $L(\frac 12, \phi) \ne 0$).

Being more speculative, we can view the proportion of values of an eigenform $\phi \in S(\calO_N)$ which are 0 as a proxy for the probability that a special
base change $\phi_K$ has analytic rank $\ge 2$, assuming each local sign
is $+1$ (see \cref{prop:per-van}).  (This proxy is not meant to be taken
too literally, but experimentally rank 2 forms tend to have more zeroes
than rank 0 forms.)
Thus the proportion of values of $\phi$
which are nonzero is a proxy for an upper bound on the probability that
$\phi$ has analytic rank $\ge 2$.  This combined with \cref{conj12}
(or rather the more precise \cref{conj:deg1}), and the proliferance of rank 2 elliptic curves, suggests that most high rank newforms come from elliptic curves:

\begin{conj} \label{last-conj}
Among all weight $2$ newforms with analytic rank $\ge 2$,
$100\%$ have degree 1---i.e., rational Fourier coefficients---when partially ordered by level.
\end{conj}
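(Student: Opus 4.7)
The plan is to reduce \cref{last-conj} to sharpened forms of \cref{conj11} and \cref{conj12} via Jacquet--Langlands and Waldspurger's theorem. Since the paper develops its quaternionic framework only for definite quaternion algebras of squarefree discriminant, I first restrict to levels $N \in \Sq_\odd$; extending to arbitrary levels would require parallel results for Eichler orders and, via Shimura curves, indefinite quaternion algebras, which I regard as a separate (and harder) task.

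Fix $f \in S_2^\new(N)$ with $N \in \Sq_\odd$, root number $+1$, and analytic rank $\ge 2$, and let $\phi \in S(\calO_N)$ be its Jacquet--Langlands transfer. Root number $+1$ forces every local sign $\eps_p(\phi)$ to be $+1$, so $\phi$ has no trivial zeroes. If $\phi$ were also zero-free, then for any $K$ special for $B_N$, \cref{thm:1cpg} would give $L(\tfrac12, \phi_K) = L(\tfrac12, f)\, L(\tfrac12, f \otimes \eta_{D}) \ne 0$, and in particular $L(\tfrac12, f) \ne 0$, contradicting $f$ having rank $\ge 2$. Hence, for those $N$ admitting a special $K$ (by the discussion following \cref{thm:1cpg}, this covers almost all $N$), every rank $\ge 2$ newform with root number $+1$ corresponds to a $\phi$ with at least one nontrivial zero.

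The final step invokes the sharpened \cref{conj:deg1}: $100\%$ of quaternionic eigenforms admitting a nontrivial zero have degree $1$. Combining, $100\%$ of rank $\ge 2$, root-number-$+1$ newforms at squarefree odd levels have degree $1$. An analogous argument should handle root-number-$-1$ forms of rank $\ge 2$ (necessarily rank $\ge 3$): these already carry the trivial zeroes forced by the sign condition, so the question becomes whether they carry extra zeroes beyond the obligatory trivial ones, and the refinement of \cref{conj12} should again force degree $1$ in the limit.

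The main obstacles I foresee are threefold. First, the ``proxy'' interpretation equating the proportion of zeroes of $\phi$ with the probability of analytic rank $\ge 2$ for its base changes is only heuristic; the one-way implication ``rank $\ge 2 \Rightarrow \phi$ has a zero'' can be made rigorous via the special-$K$ argument above, but counting rank $\ge 2$ forms precisely enough to match the degree distribution of zero-bearing $\phi$ requires much finer control of which values of $\phi$ contribute to which periods $P_{K,\chi}(\phi)$. Second, passing from $\Sq_\odd$ to all weight $2$ newforms requires analogous quaternionic theory not developed here. Third, unconditionally there is no available density theorem for analytic ranks of newforms ordered by level, so the best realistic outcome is a conditional reduction of \cref{last-conj} to the paper's conjectures on zeroes of quaternionic modular forms; the hardest single ingredient is the sharpened \cref{conj:deg1}, on which the whole implication ultimately rests.
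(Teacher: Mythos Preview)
First, note that \cref{last-conj} is a conjecture: the paper offers heuristic reasoning for it, not a proof. The paper's heuristic is quite different from yours and does not route through zeroes of quaternionic forms at all. It argues that each Atkin--Lehner eigenspace of $S_2^\new(N)$ should typically consist of one large Galois orbit together with at most a few small ones; by \cite{ILS} a positive proportion of newforms with root number $+1$ (resp.\ $-1$) have analytic rank $0$ (resp.\ $1$), and since Galois conjugates share analytic rank, every form in a large orbit then has rank $\le 1$. Hence rank $\ge 2$ forms sit in small orbits, and the expectation from \cite{me:maeda} that small orbits are predominantly degree $1$ yields the conjecture. This line of reasoning is level-agnostic and requires neither special $K$ nor any sign restriction.

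Your approach has a concrete error: root number $+1$ means $\eps_N(\phi) = \prod_{p\mid N}\eps_p(\phi) = +1$, \emph{not} that each local sign $\eps_p(\phi)$ equals $+1$. For $N \in \Sq_r$ with $r \ge 3$ one can have, e.g., two signs equal to $-1$ and the rest $+1$. In that case $\phi \notin M^{+_N}(\calO)$, $\phi$ may have trivial zeroes, and condition (ii) of \cref{prop:per-van} (on which \cref{thm:1cpg} depends) can fail at primes $p \mid \gcd(N,D)$ with $\eps_p = -1$, so your invocation of \cref{thm:1cpg} is unjustified. Your argument is only sound as stated for prime $N$. Even there, the final step does not match \cref{conj:deg1}: that conjecture counts nontrivial \emph{zeroes}, whereas you need a statement about the proportion of \emph{forms possessing a nontrivial zero} (indeed, specifically a zero at the identity class, which is all the special-$K$ argument produces) that have degree $1$. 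These two counts are not interchangeable without further input comparing the number of rank $\ge 2$ forms to the number of higher-degree forms with a nontrivial zero.
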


To elaborate a little more on our reasoning for \cref{last-conj}, from 
\cite{me:maeda} we expect
that for $N$ squarefree the set of newforms in $S_2^\new(N)$ with fixed
Atkin--Lehner signs $w_p$ for all $p | N$ should decompose into 1 large Galois
orbit and at most a few small Galois orbits.  As $N \to \infty$, it is conjectured
that 50\% of newforms in $S_2^\new(N)$ 
have analytic rank $0$ and 50\% have analytic rank $1$ (e.g., see \cite{ILS}).
Consequently, one---and therefore every---form in a large Galois orbit should
have analytic rank $\le 1$, i.e., all newforms with analytic rank $\ge 2$ should 
have small degree.  

Now \cite{me:maeda} suggests most small degree newforms are
in fact degree 1.  Further, \cref{conj:deg1} suggests that the probability,
as a function of $N$, that a degree $d$ form has rank $\ge 2$ tends to 0
notably faster for $d > 1$ than for $d=1$.  These expectations lead to
\cref{last-conj}.

We remark that, according to data in the LMFDB, for weight 2 newforms of level 
$\Gamma_0(N)$ with $N < 10000$ (not necessarily in $\Sq_\odd$ or squarefree), there are 2436 Galois orbits with 
analytic rank at least 2.  (In fact, only one has rank bigger than 2, 
corresponding to the rank 3 elliptic curve of conductor 5077.)  Of these 1970 
are degree 1, 372 are degree 2, 74 are degree 3,
18 are degree 4, and 2 are degree 5.  These data, while not ample,
agree with the above expectations.

\subsection*{Acknowledgements}
We thank David Farmer for useful comments.  We also appreciate the detailed
and thoughtful feedback of the referee.
The first author was supported by a grant from the 
Simons Foundation/SFARI (512927, KM).

\subsection*{Notation}
Throughout $B$ denotes a definite quaternion algebra over $\Q$
and $\calO$ an order in $B$.  Initially $\calO$ is allowed to
be an Eichler or sometimes more general order.
However, from \cref{sec:dims} onwards we assume $\calO$ is 
maximal.  
Denote by $\Cl(\calO) = \{ x_1, \dots, x_h \}$ the set of invertible right $\calO$-ideal
classes in $B$, and $h = |\Cl(\calO)|$ the class number of $\calO$.

Let $\Ram(B)$ be the set of finite rational primes ramified in $B$.
Let $\Delta = \prod_{p \in \Ram(B)} p$  be the discriminant of $B$,
and $N$ the level of $\calO$.  Note that some authors normalize
levels so that the level of a maximal order is 1, and some so that
the level of a maximal order is $\Delta$---we use the latter convention.

For us, $p$ always denotes a finite rational prime.
For a $\Z$-module $M$, we denote by $M_p = M \otimes_{\Z} \Z_p$
the localization at $p$.  Set $\hat \calO^\times = \prod \calO_p^\times$
and $\hat B = \prod' B_p^\times$, where $\prod'$ means 
the restricted direct product with respect to $\{ \calO_p^\times \}$.  

Similarly, if $K$ is a number field, we let $\frako_K$ be its integer ring,
$\Cl(K)$ its class group, $h_K$ its class number, and let
$\hat \frako_K^\times$ and $\hat K^\times$ denote the usual restricted products 
of the localized multiplicative groups over finite primes.
In \cref{sec:Lvals}, $K = \Q(\sqrt{-D})$ will always denote an imaginary quadratic
field of discriminant $-D$.  

For $N \in \Z_{>0}$, denote by $\omega(N)$ the number of prime divisors of $N$.

With apologies to the reader, $\eps$ is used to denote a sign or
a collection thereof, and $\epsilon$ a positive real number.


\section{Quaternionic modular forms}


Here we review the theory of definite quaternionic modular forms of trivial weight
and make some basic observations.  
Some references for quaternionic modular forms and their connection to
classical modular forms are \cite{dembele-voight}, \cite{me:cong}
and \cite{me:basis}.
For simplicity, we work over $\Q$, and primarily with
Eichler orders, but the theory extends to totally real number fields and more general
orders.

Let $B/\Q$ be a definite quaternion algebra of discriminant $\Delta$, and
$\calO$ an order in $B$.
We identify the set $\Cl(\calO)$ of invertible right $\calO$-ideal classes
with $B^\times \bs \hat B^\times / \hat \calO^\times$.
Here  $B^\times$ is diagonally embedded in $\hat B^\times$.
For ease of notation, for a function $f$ on $\Cl(\calO)$, we write
$f(x)$ for $f(B^\times x \hat \calO^\times)$ where $x \in \hat B^\times$.

We define the space of quaternionic modular forms on $B$ of level $\calO$ 
(with trivial weight and trivial central character) to be
\[ M(\calO) = \{ \phi : \Cl(\calO) \to \C \}. \]
Let $x_1, \dots, x_h \in \hat B^\times$ be a set of representatives 
for $\Cl(\calO)$, and fix right $\calO$-ideal class representatives $\calI_i$ corresponding to each $x_i$.  Let $e_i = [\calO_l(\calI_i)^\times : \Z^\times]$, where
$\calO_l(\calI_i)$ denotes the left order of $\calI_i$, i.e.,
$\calO_l(\calI_i) = x_i \hat \calO x_i^{-1} \cap B$.

Consider the Petersson inner product on $M(\calO)$ given by
\[ (\phi, \phi') = \sum_{i=1}^h \frac 1{e_i} \phi(x_i) \overline{\phi'(x_i)}. \]
For $\alpha \in \hat B^\times$, define the Hecke operator
\[ T_\alpha(\phi)(x) = \sum \phi(x \beta), \]
where $\beta$ runs over a set of representatives for 
$\hat \calO^\times \alpha \hat \calO^\times / \hat \calO^\times$ so that
$\hat \calO^\times \alpha \hat \calO^\times = \bigsqcup \beta \hat \calO^\times$.
For a positive integer $n$, define $T_n = \sum T_\alpha$ where $\alpha$ runs over
a set of elements in $\hat B^\times$ such that $\bigsqcup \hat \calO^\times \alpha
\hat \calO^\times = \{ x \in \hat \calO : |N_{B/\Q}(x)| = n \}$.  Here
$N_{B/\Q}$ denotes the reduced norm from $B$ to $\Q$.
  Classically, we may view
$T_n \phi$ as the sum of right translates of $\phi$ by the integral
right $\calO$-ideals of norm $n$.
The Hecke operators $T_n$ are a commuting family of self-adjoint
operators with respect to $( \cdot \,, \cdot )$, which are algebraically
generated by the $T_p$'s.

For simplicity, we now assume $\calO$ is Eichler or more generally
special of unramified quadratic type in the terminology of \cite{me:basis}.  
Let $N$ be the level of $\calO$.  Then the class number $h$ of $\calO$
only depends on $\Delta$ and $N$.  See \cite{HPS} for a formula for $h$.
We call the Hecke algebra generated over $\Z$ by all $T_p$'sacting on $M(\calO)$ or $S(\calO)$ the full Hecke algebra.
The subalgebra generated by all $T_p$'s with $p \nmid N$ is called the unramified
or spherical Hecke algebra. 

Let $\one \in M(\calO)$ denote the constant function 1.  Then
$\C \one$ is a Hecke-invariant subspace of $M(\calO)$
(for $p \nmid N$, a local calculation shows there are $p+1$
right $\calO$-ideals of norm $p$, so $T_p \one = (p+1) \one$), which we think of
as the Eisenstein subspace of $M(\calO)$.
We define the space $S(\calO)$ of cusp forms to be the orthogonal
complement of $\one$ in $M(\calO)$, i.e., $\phi \in S(\calO)$ if and
only if $\sum \frac 1{e_i} \phi(x_i) = 0$.
By an eigenform of $M(\calO)$, we mean an element $\phi$ of $M(\calO)$
which is a simultaneous eigenvector of all $T_p$'s for $p \nmid N$. 
If $\calO$ is maximal, this is equivalent to saying that
$\phi$ is a simultaneous eigenvector for every $T_p$ (or every $T_n$).

The (Eichler--Shimizu--)Jacquet--Langlands correspondence induces
a monomorphism of Hecke modules from $S(\calO) \to S_2(N)$ for the unramified Hecke algebra, and one can describe the image as
a Hecke module as in \cite{me:basis}.  In the special case that $\calO$ is maximal,
we in fact get Hecke module isomorphisms with newspaces for the
full Hecke algebra:
\begin{equation} \label{eq:JL}
M(\calO) \simeq \C E_{2,N} \oplus S_2^\new(N), \quad
S(\calO) \simeq S_2^\new(N),
\end{equation}
Here $E_{2,N}$ is an Eisenstein series in $M_2(N)$ with the
appropriate ramified Hecke eigenvalues (see
\cite{me:cong} or \cite{me:basis}).  For $N$ prime, we may take
$E_{2,N}$ the unique Eisenstein series in $M_2(N)$ with first Fourier coefficient
1.

\begin{example} \label{ex:37}
Let $N=37$, $B$ be the quaternion algebra of discriminant $N$, and 
$\calO$ be a maximal order in $B$.  For prime discriminants $N$,
one can use the Eichler mass formula and class number formula to
determine $h$ and the $e_i$'s in terms of the congruence class of 
$N \mod 12$.  E.g., see \cite[Table 1.3]{gross}.   In this case,
the class number is $h=3$ and each $e_i = 1$.
Hence $S(\calO) = \{ \phi \in M(\calO) : \sum_{i=1}^3 \phi(x_i) = 0 \}$.

We compute (the Brandt matrix for) the
 Hecke operator $T_2$ in Sage with the command  
 \verb+BrandtModule(37).hecke_matrix(2)+.  This gives
 $T_2 = \bmx 1 & 1 & 1 \\ 1 & 0 & 2 \\ 1 & 2 & 0 \emx$.
 Since $T_2$ acts on $\C E_{2,N} \oplus S_2^\new(N)$, and thus
 $M(\calO)$, with distinct eigenvalues, it generates the Hecke algebra over $\C$,
 and $\phi$ in $M(\calO)$ will be an eigenform if and only if
 $(\phi(x_1), \phi(x_2), \phi(x_3))$ is an eigenvector of $T_2$.
Hence a basis of eigenforms for $M(\calO)$ is given by the following table:
 \begin{center}
\begin{tabular}{c|ccc|c}
& $x_1$ & $x_2$ & $x_3$ & $\eps_{37}$  \\
\hline
$\one$ & 1 & 1 & 1  & $+1$  \\
$\phi_1$ & 2 & $-1$ & $-1$ & $+1$  \\
$\phi_2$ & 0 & $1$ & $-1$ & $-1$ \\
\end{tabular}
\end{center}
The $x_i$ column represents the value $\phi(x_i)$, and the $\eps_{37}$ column
represents the eigenvalue for $T_N = T_{37}$, which for $N$ prime gives the
root number of the eigenform.  

Note $S_2(37)$ contains two newforms $f_1, f_2$.
These are both rational (degree 1) and thus correspond to the two isogeny classes of 
elliptic curves of conductor 37.  Say $f_1$ has root number $+1$ and $f_2$ has
root number $-1$.  Then the Jacquet--Langlands correspondence says that,
for all Hecke operators $T_n$,
$\one$ has the same Hecke eigenvalues as $E_{2,37}$, and $\phi_i$ has the
same Hecke eigenvalues as $f_i$ for $i=1, 2$.
\end{example}

The following lemma is valid for arbitrary orders.

\begin{lem} \label{lem1}
If $\calO$ and $\calO'$ are locally isomorphic orders in $B$ (i.e., $\calO_p \simeq \calO'_p$ for all $p$), then there
is a bijection $\iota : \Cl(\calO') \to \Cl(\calO)$ such that
$\phi \mapsto \phi \circ \iota$ defines a Hecke isomorphism
$\iota^*: M(\calO) \to M(\calO')$ for the full Hecke algebra,
i.e., $\iota^*(T_n \phi) = T_n (\iota^* \phi)$ for all $n$.
\end{lem}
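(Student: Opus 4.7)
The plan is to produce the required bijection by conjugation by a single adelic element, and then derive Hecke equivariance from the fact that the Hecke operators are indexed by double cosets that transport under conjugation.

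First, I would set up $\alpha \in \hat B^\times$ with $\alpha \hat \calO \alpha^{-1} = \hat \calO'$. Local isomorphism gives, for each prime $p$, an isomorphism $\calO_p \xrightarrow{\sim} \calO'_p$ of $\Z_p$-orders; by Skolem--Noether applied to $B_p$, this extends to conjugation by some $\alpha_p \in B_p^\times$. To assemble these into an element of $\hat B^\times$ I need $\alpha_p \in \calO_p^\times$ for almost all $p$, for which it suffices to check that $\calO_p = \calO'_p$ at almost all $p$: writing a $\Z$-basis of each in terms of the other shows $\calO' \subset \tfrac1d \calO$ and $\calO \subset \tfrac1{d'}\calO'$ for some integers $d, d'$, so the two local orders agree for $p \nmid dd'$. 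At such $p$ take $\alpha_p = 1$.

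Next I would define the map $\iota : \Cl(\calO') \to \Cl(\calO)$ on adelic representatives by
\[ \iota(B^\times x \hat \calO'^\times) = B^\times x\alpha \hat \calO^\times. \]
This is well defined because $\hat \calO'^\times = \alpha \hat \calO^\times \alpha^{-1}$, so a change $x \mapsto bxu$ with $u \in \hat \calO'^\times$ translates to $x\alpha \mapsto bx\alpha(\alpha^{-1}u\alpha)$ with $\alpha^{-1}u\alpha \in \hat \calO^\times$. The inverse, sending $B^\times z \hat \calO^\times \mapsto B^\times z\alpha^{-1}\hat \calO'^\times$, is well defined by the same identity, so $\iota$ is a bijection and $\iota^* : M(\calO) \to M(\calO'), \ \phi \mapsto \phi \circ \iota$, is a $\C$-linear isomorphism.

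For Hecke equivariance, the key observation is that conjugation by $\alpha$ induces a bijection
\[ \hat \calO^\times \gamma \hat \calO^\times \longleftrightarrow \hat \calO'^\times (\alpha\gamma\alpha^{-1}) \hat \calO'^\times \]
on double cosets, and the reduced norm is invariant under conjugation. Hence if $\gamma_1,\dots,\gamma_m$ are coset representatives for the integral right $\calO$-ideals of norm $n$, then $\beta_i := \alpha \gamma_i \alpha^{-1}$ serve as coset representatives for the integral right $\calO'$-ideals of norm $n$. Now compute directly:
\[ (T_n \iota^*\phi)(x) = \sum_{i} \phi\bigl(\iota(x\beta_i)\bigr) = \sum_i \phi(x\beta_i \alpha) = \sum_i \phi(x\alpha\gamma_i) = (T_n\phi)(x\alpha) = (\iota^*T_n\phi)(x), \]
establishing $\iota^* T_n = T_n \iota^*$ for every $n$.

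The only genuinely subtle step is the first one --- producing a single global adelic conjugator $\alpha$ rather than a family of local ones, which requires the elementary but easy-to-overlook observation that two $\Z$-orders in $B$ automatically coincide at all but finitely many primes. Everything else is bookkeeping on double cosets.
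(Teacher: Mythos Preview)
Your proof is correct and follows essentially the same approach as the paper: produce an adelic element $\alpha$ (the paper calls it $\xi$) with $\hat\calO' = \alpha\hat\calO\alpha^{-1}$, define $\iota$ by right multiplication by $\alpha$, and verify Hecke equivariance by noting that conjugation by $\alpha$ carries integral right $\calO$-ideals of norm $n$ to integral right $\calO'$-ideals of norm $n$. The only difference is that you spell out the existence of $\alpha$ via Skolem--Noether plus the almost-all-primes commensurability argument, whereas the paper simply asserts it.
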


\begin{proof} Since $\calO$ and $\calO'$ are locally isomorphic, there exists
$\xi \in \hat B^\times$ such that $\hat \calO' = \xi \hat \calO \xi^{-1}$.
The map $x \mapsto x \xi$ for $x \in \hat B^\times$ induces a bijection
$\iota : \Cl(\calO') \to \Cl(\calO)$.  

If we let $\gamma \in \hat \calO/\hat \calO^\times$ 
run over the integral right $\calO$-ideals of norm $n$,
then
\[ \iota^*(T_n \phi)(x) = (T_n \phi)(x \xi) = \sum \phi(x \xi \gamma) = 
\sum \phi (x \gamma' \xi) = T_n(\iota^*\phi)(x), \]
where $\gamma' = \xi \gamma \xi^{-1}$, as then
$\gamma'$ runs over the integral right $\calO'$-ideals of norm $n$.
\end{proof}

The genus of $\calO$ is the collection of all orders $\calO'$ in $B$
which are locally isomorphic to $\calO$. The
number of isomorphism classes of orders in the genus of $\calO$ is finite,
and is called the type number of $\calO$.

Consequently, with the notation as in the lemma,
if we take a basis of eigenforms $\{ \phi_i \}$ for $M(\calO)$, then
$\{ \phi_i \circ \iota \}$ is a basis of eigenforms for $M(\calO')$.
If $\calO$ is Eichler (or special of unramified quadratic type), then
the genus of $\calO$ simply consists of all Eichler (or special of
unramified quadratic type) orders $\calO' \subset B$ of level $N$
\cite{HPS}.  Now \cref{lem1} tells us that 
the sets of values of the eigenforms in $M(\calO)$ only
depend upon the genus of $\calO$, which for our class of orders 
only depends upon $\Delta$ and $N$.

Suppose now $\calO$ is maximal (so $N=\Delta$).
Then, given any two eigenforms $\phi, \phi' \in M(\calO)$,
their collections of Hecke eigenvalues agree for all $T_n$ 
(or even almost all $T_p$)
if and only if $\phi$ and $\phi'$ are scalar multiples.
Thus the multiset of values $\{ \phi(x_1), \dots, \phi(x_h) \}$
of an eigenform $\phi$ for a maximal order $\calO$ is, up to a scalar multiple,
determined by its Hecke eigenvalues.  In particular, the set of zeroes
of $\phi$ is determined its Hecke eigenvalues.  Moreover,
the number of zeroes of such a $\phi$ is an invariant
of the associated newform $f \in S_2(N)$, i.e., it does not depend upon
the choice of $\calO$ or the choice of $\phi$ (provided $\Delta = N$).

In \cref{sec:Lvals}, we also consider quaternionic modular forms 
from the representation-theoretic perspective. 
Trivial weight automorphic forms for $B^\times(\A_\Q)$ with trivial character
are simply functions $\phi : B^\times \A_\Q^\times \bs B^\times(\A_\Q)/B^\times(\R) 
\simeq B^\times \hat \Q^\times \bs \hat B^\times \to \C$.  
Via the right regular representation, this space of automorphic forms
decomposes into a direct sum of irreducible automorphic representations 
$\pi$, and  correspondingly $M(\calO)$ breaks up as a direct
sum of $\hat \calO^\times$-invariant functions:
$M(\calO) = \bigoplus \pi^{\hat \calO^\times}$.  Similarly, $S(\calO) = \bigoplus \pi^{\hat \calO^\times}$, where $\pi$ runs over the irreducible automorphic representations
of $B^\times(\A_\Q)$ which are not 1 dimensional (so necessarily infinite dimensional).


\section{Local involutions and trivial zeroes}
\label{sec:triv0}


In this section, we study the zeroes of quaternionic modular forms
forced by local sign conditions, which we call trivial zeroes.
Recall $\Delta$ is the discriminant of $B$.
We assume now $\calO \subset B$ is Eichler of level $N$.  From
\cref{sec:dims} onwards, we will further assume $\calO$ is maximal,
i.e., $N=\Delta$.

\subsection{Local involutions and trivial zeroes} 
Consider a prime $p | N$. If $p | \Delta$, then
$\calO_p$ is the unique maximal order in $B_p$, and we let $\varpi_p = \varpi_{B_p}$ 
denote a uniformizer in $\calO_p$.  If $p \nmid \Delta$, then
$\calO_p \subset B_p = M_2(\Q_p)$ is a local Eichler order of some level $p^e$, 
and we can choose
$\varpi_p = \varpi_{\calO_p} \in \calO_p$ such that $\varpi_p$ is 
$\GL_2(\Q_p)$-conjugate to $\bmx & 1 \\ p^e \emx$
and $\varpi_p$ normalizes $\calO_p$. 

In either case, we lift our local $\varpi_p \in \calO_p$ to the element
$\hat \varpi_{p} = (1, \dots, 1, \varpi_{p}, 1, \dots) \in \hat B^\times$,
which normalizes $\hat \calO^\times$.
We may assume $\varpi_{p}^2 = p^{v_p(N)} \in Z(B_p^\times) \simeq \Q_p^\times$.
Thus right multiplication by $\hat \varpi_{B_p}$  gives an action on $\Cl(\calO)$ 
which is a permutation of order at most 2.
Denote this involution by $\sigma_p$, which we will also view as the
corresponding permutation of our fixed set of representatives
$\{x_1, \dots, x_h \}$.

We can also view this action classically in terms of ideals.  Let $\mathrm{Pic}(\calO)$
denote the Picard group of 2-sided invertible $\calO$-ideals modulo $\Q^\times$.
Each $\hat \varpi_{B_p}$ corresponds to a 2-sided (prime) $\calO$-ideal $\mathfrak P_p$ 
of norm $p$,
and these ideals form a set of generators for $\mathrm{Pic}(\calO) \simeq (\Z/2\Z)^{\omega(N)}$.  
Then $\mathrm{Pic}(\calO)$ acts on $\Cl(\calO)$
by right multiplication, with multiplication by the class of the $\mathfrak P_p$ acting
by $\sigma_p$.

If $p | \Delta$, the Hecke operator $T_{\hat \varpi_p} = T_p$, with $T_p$ defined
as above.  
However, if $p | N$ but $p \nmid \Delta$, then $T_{\hat \varpi_p}$ is not the $T_p$
defined in the previous section, but is the analogue of the Atkin--Lehner operator 
$W_p$ on $M(\calO)$.  E.g., if $p || N$ but $p \nmid \Delta$,
then $T_{\hat \varpi_p}$ acts as $-T_p$ on $S(\calO)$ and as the identity on $\C \one$.

Now $\phi \in M(\calO)$ being an eigenform for $T_{\hat \varpi_p}$ for some $p | N$
with eigenvalue $\eps_p = \pm 1$ is equivalent
to the statement that
\begin{equation} \label{Tpram}
 \phi(\sigma_p(x_i)) = \eps_p \phi(x_i), \quad 1 \le i \le h. 
\end{equation}
In particular, if $x_i$ is a fixed point of the involution $\sigma_p$, then
any $\phi$ in the $(-1)$-eigenspace for $T_{\hat \varpi_p}$ must necessarily vanish on $x_i$.  For instance, in \cref{ex:37}, one can deduce from \eqref{Tpram}
that $\sigma_{37}$ permutes $x_2$ and $x_3$ and fixes $x_1$, which forces
$\phi_2(x_1) = 0$.
This is the simplest way in which the local involutions $\sigma_p$
can force an eigenform $\phi$ to have zeroes, but there are others.

For instance, suppose $p$ and $q$ are primes dividing $\Delta$,
 and $\sigma_p$ and $\sigma_q$
both interchange $x_1$ and $x_2$.  If the $T_p$ and $T_q$
eigenvalues of $\phi$ have opposite signs, then we are forced
to have $\phi(x_1) = \phi(x_2) = 0$ (as in \cref{ex:154} below).
Alternatively, suppose $p$, $q$ and $r$ are primes dividing $\Delta$, and 
$\sigma_p$ (resp.\ $\sigma_q$, resp.\ $\sigma_r$) interchanges $x_1$ and $x_2$
(resp.\ $x_2$ and $x_3$, resp.\ $x_3$ and $x_1$).
Then if $\phi$ is a form in the $(-1)$-eigenspace for $T_p$ and the $(+1)$-eigenspaces 
for $T_q$ and $T_r$, we must have
\[ \phi(x_1) = - \phi(x_2) = - \phi(x_3) = - \phi(x_1), \]
forcing $\phi(x_1)$ (and thus $\phi(x_2)=\phi(x_3)$) to be zero.  

Let $X_1, \dots, X_t$ denote the set of orbits of $\Cl(\calO)$
under the action of $\mathrm{Pic}(\calO)$.  We remark that
two ideal classes in $\Cl(\calO)$ have isomorphic left orders
if and only if they lie in the same orbit under $\mathrm{Pic}(\calO)$.
Consequently, $t$ is the type number of $\calO$.

By a sign pattern $\eps$ for $N$, we mean
a multiplicative function $d \mapsto \eps_d$ from the set of divisors $d$ of $N$ to 
$\{ \pm 1 \}$.
Consider the associated eigenspace for $(T_{\hat \varpi_p})_{p | N}$,
\[ M^\eps(\calO) = \{ \phi \in M(\calO) : T_{\hat \varpi_p}(\phi) = \eps(p) \phi \text{ for } p | N \}. \]

Now, as in \cite{me:cong2}, we define a signed (multi)graph $\Sigma^\eps$ on $\Cl(\calO)$
as follows.  Begin with an empty graph on $\Cl(\calO)$.  
For $p | N$, we adjoin an edge between $x_i$ and $\sigma_p(x_i)$ with
sign $\eps_p $ for each orbit $\{ x_i, \sigma_p(x_i) \}$ of $\sigma_p$.   
(If $x_i$ is a fixed point of $\sigma_p$, we are adjoining a signed loop.)
Call the resulting graph $\Sigma^\eps$.  Note that its connected components are
$X_1, \dots, X_t$.

For a (not necessarily simple) path in $\Sigma^\eps$, define its parity to 
be the product of the signs of its edges.  By a cycle we will simply mean a closed path,
(allowing for loops as well as repeated vertices and edges). 
 We call a component $X_j$ $\eps$-admissible
if there are no cycles of parity $-1$ in $\Sigma^\eps$ that are supported on $X_j$.

Consider any $\phi \in M^\eps(\calO)$.  If $x, x' \in X_j$, then by definition there
is a sequence $p_1, \dots, p_m$ of (not necessarily distinct) primes dividing $N$
such that $(\sigma_{p_m} \circ \dots \circ \sigma_{p_1})(x) = x'$.
Consequently, the values of $\phi$ on all ideal classes in a given $X_j$ 
are determined by the value on a single $x \in X_j$, and in fact
$\phi(x') = \pm \phi(x)$ for $x, x' \in X_j$.  Here the sign must be the parity
of the path in $\Sigma^\eps$ given by
$x, \sigma_{p_1}(x), \dots, (\sigma_{p_m} \circ \dots \circ \sigma_{p_1})(x)$.  
In particular, when $x'=x$ this path is a cycle, and if it has parity $-1$, then
we must have $\phi(x) = 0$.  We call such a zero of $\phi$ a trivial zero,
i.e., we say $\phi(x)$ is a trivial zero if $\phi \in M^\eps(\calO)$ and
the $\mathrm{Pic}(\calO)$-orbit of $x$ is not $\eps$-admissible.
Since this notion of a trivial zero only depends on $\eps$ and the orbit
$X$ rather than on $\phi$ and $x$, 
we sometimes simply say that $x$ or $X$ is a trivial zero for $\eps$.

\begin{lem} There exists some $\phi \in M^\eps(\calO)$ which is nonzero on
$X_j$ if and only if $X_j$ is $\eps$-admissible.
\end{lem}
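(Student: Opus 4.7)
The plan is to split into the two directions. For the forward direction (``nonzero on $X_j$'' implies $\eps$-admissibility), I would simply invoke the paragraph immediately preceding the lemma: if a cycle of parity $-1$ occurs in $X_j$, say based at $x$, then iterating $\phi(\sigma_p(y)) = \eps_p \phi(y)$ around that cycle forces $\phi(x) = -\phi(x) = 0$. Propagating the sign relations $\phi(x') = \pm \phi(x)$ along a path from $x$ to any $x' \in X_j$ then forces $\phi \equiv 0$ on all of $X_j$, contradicting the hypothesis.

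For the converse, assuming $X_j$ is $\eps$-admissible, I would construct $\phi \in M^\eps(\calO)$ explicitly. First I would fix a basepoint $x_0 \in X_j$ and declare $\phi(x_0) = 1$. For any other $x' \in X_j$, I would choose some walk $x_0, \sigma_{p_1}(x_0), \dots, (\sigma_{p_m} \circ \cdots \circ \sigma_{p_1})(x_0) = x'$ in $\Sigma^\eps$ and set $\phi(x') = \eps_{p_1} \cdots \eps_{p_m}$. On every other orbit $X_i$ (with $i \ne j$), I would set $\phi \equiv 0$.

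The one genuinely nontrivial step is verifying that $\phi$ is well-defined on $X_j$. If two walks connect $x_0$ to $x'$, concatenating one with the reverse of the other produces a cycle based at $x_0$ in $\Sigma^\eps$; by $\eps$-admissibility this cycle has parity $+1$, so the two walks assign $x'$ the same value. Once well-definedness is in hand, verifying $\phi \in M^\eps(\calO)$ is automatic: for $p \mid N$ and $x \in X_j$, appending the $\sigma_p$-edge to any walk from $x_0$ to $x$ produces a walk from $x_0$ to $\sigma_p(x)$ whose parity is $\eps_p$ times the original, giving $\phi(\sigma_p(x)) = \eps_p \phi(x)$; for $x$ outside $X_j$, the orbit containing $x$ is closed under $\sigma_p$ (the orbits $X_i$ are precisely the $\mathrm{Pic}(\calO)$-orbits), so both $\phi(x)$ and $\phi(\sigma_p(x))$ vanish.

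I do not expect any serious obstacle; the only real content is the well-definedness argument, which is essentially immediate once one thinks in graph-theoretic terms. The minor bookkeeping point is handling walks that repeat vertices or traverse loops $\sigma_p(x) = x$, but since parities multiply edge-by-edge, these cases need no special treatment.
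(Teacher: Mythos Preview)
Your proposal is correct and follows essentially the same approach as the paper's proof: the forward direction is deferred to the preceding discussion, and the converse constructs $\phi$ supported on $X_j$ by fixing a basepoint, assigning values via path parities, and using $\eps$-admissibility to check well-definedness. Your write-up is slightly more explicit about verifying the eigenvalue condition on and off $X_j$, but the argument is the same.
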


\begin{proof} The ``only if'' direction was shown above.  
Conversely, suppose $X_j$ is $\eps$-admissible.  Define a nonzero $\phi$ supported
on $X_j$ as follows.  Fix some $x \in X_j$ and set $\phi(x) = 1$.  
For any $x' \in X_j$, choose
a sequence of $p_1, \dots, p_m$ of primes dividing $N$ which corresponds to
a path from $x$ to $x'$ as above.  Put $\phi(x') = \pm 1$ where $\pm$ is the
parity of this path in $\Sigma^\eps$.  Note that, by admissibility, the parity is independent 
of the choice of path (otherwise we could concatenate them to get a cycle of parity $-1$).
In particular, $\phi(\sigma_p(x_i)) = \eps_p \phi(x_i)$ for $p | N$ and $1 \le i \le h$,
i.e., $\phi \in M^\eps(\calO)$.
\end{proof}

Since we may take a basis for $M^\eps(\calO)$ which consists of forms $\phi$
that are supported on a single connected component $X_j$, this implies
that $\dim M^\eps(\calO)$ equals the number of $X_j$ which are $\eps$-admissible.  Thus the set of trivial zeroes for $\eps$, i.e., the union of the 
$\eps$-inadmissible orbits, is precisely the set of $x \in \Cl(\calO)$
such that $\phi(x) = 0$ for all $\phi \in M^\eps(\calO)$.

\begin{example} \label{ex:154}
 Here is a basis of eigenforms for a maximal
order $\calO$ in the quaternion algebra $B$ of discriminant
$\Delta = N=154 = 2 \cdot 7 \cdot 11$, for a suitable ordering of
ideal classes.  The $\eps_p$ columns
denote the associated eigenvalue for $T_p = T_{\hat \varpi_p}$, where $p | N$.
 \begin{center}
\begin{tabular}{c|cccccc|c|c|c}
& $x_1$ & $x_2$ & $x_3$ & $x_4$ & $x_5$ & $x_6$ & $\eps_2$ & $\eps_7$ & $\eps_{11}$ \\
\hline
$\one$ & 1 & 1 & 1 & 1 & 1 & 1 & $+$ & $+$ & $+$ \\
$\phi_1$ & 1 & 1 & $\alpha$ & $\alpha$ & $-2\alpha-2$ &
$-2\alpha-2$ & $+$ & $+$ & $+$ \\
$\phi_2$ & 1 & 1 & $\bar \alpha$ & $\bar \alpha$ & $-2\bar \alpha-2$ &
$-2\bar \alpha-2$ & $+$ & $+$ & $+$ \\
$\phi_3$ & 0 & 0 & 0 & 0 & 1 & $-1$ & $+$ & $-$ & $-$ \\
$\phi_4$ & 1 & $-1$ & 0 & 0 & 0 & 0 & $-$ & $-$ & $+$ \\
$\phi_5$ &0 & 0 & 1 & $-1$ & 0 & 0 & $-$ & $-$ & $-$
\end{tabular}
\end{center}
Here $\alpha = \frac{-3+\sqrt 5}2$ and $\bar \alpha = \frac{-3-\sqrt 5}2$.
One can deduce from this table that there are 3 orbits: 
$X_1 = \{ x_1, x_2 \}$,
$X_2 = \{ x_3, x_4 \}$ and $X_3 = \{ x_5, x_6 \}$.  
Moreover, in cycle notation $\sigma_2$ acts as $(x_1 \, x_2)(x_3 \, x_4)$,
$\sigma_7$ acts as $(x_1 \, x_2)(x_3 \, x_4)(x_5 \, x_6)$,
and $\sigma_{11}$ acts as $(x_3 \, x_4)(x_5 \, x_6)$.  
The signed graph $\Sigma^\eps$ is
\begin{center}
\begin{tikzpicture}
\useasboundingbox (0,-0.75) rectangle (11,0.75);
\Vertex[L=\hbox{$x_1$},x=0,y=0]{v1}
\Vertex[L=\hbox{$x_2$},x=2,y=0]{v2}
\Vertex[L=\hbox{$x_3$},x=4,y=0]{v3}
\Vertex[L=\hbox{$x_4$},x=7,y=0]{v4}
\Vertex[L=\hbox{$x_5$},x=9,y=0]{v5}
\Vertex[L=\hbox{$x_6$},x=11,y=0]{v6}
\tikzset{LabelStyle/.style =
   {fill=white}}
\Loop[dist=1.5cm,label=$\eps_{11}$](v1)
\Loop[dist=1.5cm,dir=EA,label=$\eps_{11}$](v2)
\Edge[style={bend left, <->},label=$\eps_2$](v1)(v2)
\Edge[style={bend left, <->},label=$\eps_7$](v2)(v1)
\Edge[style={bend left, <->},label=$\eps_2$](v3)(v4)
\Edge[style={bend left, <->},label=$\eps_{11}$](v4)(v3)
\Edge[style=<->,label=$\eps_7$](v3)(v4)
\Loop[dist=1.5cm,label=$\eps_2$](v5)
\Loop[dist=1.5cm,dir=EA,label=$\eps_2$](v6)
\Edge[style={bend left, <->},label=$\eps_7$](v5)(v6)
\Edge[style={bend left, <->},label=$\eps_{11}$](v6)(v5)
\GraphInit[vstyle=empty]
\end{tikzpicture}
\end{center}
In particular, unless all the signs are the same, we
have trivial zeroes on $X_2$---indeed, $\phi_3$ and $\phi_4$
are zero on $X_2$.  If $\eps_{11} = -1$ or if $\eps_2$ and $\eps_7$ have
opposite signs (as for $\phi_3$ and $\phi_5$), we have trivial zeroes on $X_1$. 
Similarly, if $\eps_2 = -1$ or if $\eps_7$ and $\eps_{11}$ have opposite signs (as for $\phi_4$ and $\phi_5$), we have trivial zeroes on $X_3$.
Hence all zeroes in the table are trivial zeroes.
\end{example}

Let $+_N$ (resp.\ $-_N$) denote the sign pattern for $N$ which is $+1$ (resp.\ $-1$) for all divisors (resp.\ prime divisors) of $N$.

\begin{prop} \label{prop33}
Let $\eps$ be a sign pattern for $N$.
Then the number of orbits $X_j$ on which
$\eps$ has trivial zeroes is $\dim M^{+_N}(\calO) - \dim M^\eps(\calO)$.
In particular, $\eps$ has no trivial zeroes if and only if
$\dim M^\eps(\calO) = \dim M^{+_N}(\calO)$.
\end{prop}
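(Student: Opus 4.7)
The plan is to reduce the statement to the preceding lemma together with one trivial observation about the all-$+$ sign pattern. By that lemma, for any sign pattern $\eps$ we have
\[ \dim M^\eps(\calO) = \#\{ j : X_j \text{ is } \eps\text{-admissible}\}, \]
since we can take a basis of $M^\eps(\calO)$ consisting of forms supported on a single $\eps$-admissible component. So the proof amounts to evaluating this count for $\eps = +_N$.

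The key observation is that $\Sigma^{+_N}$ has every edge labeled $+1$, so every cycle in $\Sigma^{+_N}$ has parity $+1$. In particular, no cycle has parity $-1$, so every one of the $t$ orbits $X_1, \dots, X_t$ is $+_N$-admissible. Applying the lemma with $\eps = +_N$ therefore gives $\dim M^{+_N}(\calO) = t$.

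Now, an orbit $X_j$ carries trivial zeroes for $\eps$ exactly when it fails to be $\eps$-admissible, by the definition of trivial zero. Hence the number of orbits on which $\eps$ has trivial zeroes is
\[ t - \#\{ j : X_j \text{ is } \eps\text{-admissible}\} = \dim M^{+_N}(\calO) - \dim M^\eps(\calO), \]
proving the main assertion. The ``in particular'' statement follows immediately: $\eps$ has no trivial zeroes precisely when every orbit is $\eps$-admissible, i.e.\ when the count of $\eps$-admissible orbits equals $t = \dim M^{+_N}(\calO)$.

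There is essentially no real obstacle here; the content has already been packaged into the preceding lemma, and the only new ingredient is the triviality that a product of $+1$'s is $+1$. The one thing worth being careful about is the convention that cycles may repeat edges and include loops (as specified just before the definition of admissibility), so that ``every cycle has parity $+1$'' still holds unconditionally for $\eps = +_N$.
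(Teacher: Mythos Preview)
Your proof is correct and follows essentially the same approach as the paper: both use that $\dim M^\eps(\calO)$ equals the number of $\eps$-admissible orbits (established in the discussion following the lemma) and then observe that every orbit is $+_N$-admissible because all edges in $\Sigma^{+_N}$ carry sign $+1$. The only minor point is that the dimension count is stated in the paragraph after the lemma rather than in the lemma itself, but this does not affect the argument.
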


In particular, no eigenform in $M^{+_N}(\calO)$ has trivial zeroes.

\begin{proof} As explained above, $\dim M^\eps(\calO)$ is the number of
$\eps$-admissible orbits $X_j$.  Hence it suffices to show that
all orbits are $+_N$-admissible.  However, this is obvious as all cycles have parity
$+1$ for $\Sigma_{+_N}$.
\end{proof}

For example, if $N=\Delta=30$ so $\calO$ is maximal, then $h=2$.  
Let $\eps$ be the sign pattern with $\eps_2 = \eps_5 = -1$ and 
$\eps_3 = 1$.  Then  $\dim M^\eps(\calO) = \dim M^{+_{30}}(\calO) = 1$,
so the eigenform $\phi \in M^\eps(\calO)$ (which corresponds to the
unique newform in $S^\new_2(30)$) has no trivial zeroes.  In fact,
we may scale $\phi$ so that $\phi(x_1) = -\phi(x_2) = 1$, and 
$\phi$ has no zeroes.

\subsection{Dimension formulas} \label{sec:dims}

From now on, we assume $\calO$ is a maximal order in $B$, i.e.,
$N = \Delta$.
In this situation, we use refined dimension formulas from
\cite{me:dim} to get more explicit 
results on trivial zeroes for sign patterns.
The restriction to $\calO$ maximal is largely for simplicity---one
should be able to similarly treat Eichler orders of squarefree level with
an extension of \cite{me:dim}.
In principle, one could also consider Eichler
orders of non-squarefree level, but then the relevant dimension formulas
are much more complicated.

\begin{lem} \label{sigmaN-fixpt}
The involution $\sigma_N = \prod_{p|N} \sigma_p$
 acts on $\Cl(\calO)$ with fixed points.
\end{lem}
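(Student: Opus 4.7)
The plan is to reduce to finding a principal generator of the ``ramified'' two-sided ideal at some maximal order, then transfer the resulting fixed point to $\Cl(\calO)$ via a connecting ideal.

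First I would observe that $K = \Q(\sqrt{-N})$ embeds in $B$.  The embedding criterion for a definite quaternion algebra of discriminant $N$ is that every $p | N$ be non-split in $K$, and every prime dividing $N$ divides the discriminant of $K$: for odd $p | N$ this is immediate, and for $p = 2 | N$, the fact that $-N \equiv 2 \pmod 4$ forces $2$ to ramify in $K$.  Fix such an embedding and let $\beta \in B$ be the image of $\sqrt{-N}$, so that $\beta$ has reduced trace $0$ and reduced norm $N(\beta) = N$.  Being integral, $\beta$ lies in some maximal order $\calO' \subset B$.

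Next I would verify that $\beta$ normalizes $\calO'$ locally at every prime.  For $p \nmid N$, $N(\beta)$ is a $p$-adic unit, so $\beta \in (\calO'_p)^\times$ and conjugation preserves $\calO'_p$; for $p | N$, $\calO'_p$ is the unique maximal order of the division algebra $B_p$ and is therefore normalized by every element of $B_p^\times$.  Thus $\beta \calO' = \calO' \beta$ is a two-sided invertible $\calO'$-ideal of reduced norm $N$.  Since $\mathrm{Pic}(\calO') \simeq (\Z/2\Z)^{\omega(N)}$ has a unique element of reduced norm $N$, namely the product of the prime ideals $\mathfrak{P}_p$ for $\calO'$, this element is principal, generated by $\beta$.

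To transfer the fixed point to $\Cl(\calO)$, I would pick any invertible right $\calO$-ideal $\calI$ with left order $\calO_l(\calI) = \calO'$; such $\calI$ exists because any two maximal orders of $B$ are connected via an invertible ideal.  Conjugation by $\calI$ sends the two-sided ideal group of $\calO$ to that of $\calO'$ preserving reduced norms, so $\calI \mathfrak{P}_N \calI^{-1}$ must be the unique two-sided $\calO'$-ideal of reduced norm $N$, namely $\beta \calO'$.  Right-multiplying by $\calI$ gives $\calI \mathfrak{P}_N = \beta \calI$, hence $[\calI \mathfrak{P}_N] = [\calI]$ in $\Cl(\calO)$, showing $\sigma_N$ fixes the class $[\calI]$.

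I expect the only nontrivial point to be the embedding criterion for $\Q(\sqrt{-N})$ when $2 | N$; the remaining steps---local normalization, classifying two-sided ideals of norm $N$ on a maximal order, and existence of a connecting ideal---are all standard consequences of the local-global theory of maximal orders in definite quaternion algebras.  The conceptual heart is that while $\mathfrak{P}_N$ need not be principal in the given $\calO$, there is always some maximal order $\calO'$ in $B$ in which it is, and the connecting-ideal class then provides the desired fixed point.
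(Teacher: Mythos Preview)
Your argument is correct and genuinely different from the paper's.  The paper argues by trace: since $T_N$ acts on $M(\calO)$ as the permutation matrix of $\sigma_N$, one has $\tr T_N = \#\{\text{fixed points of }\sigma_N\}$; but via Jacquet--Langlands, $\tr T_N = 1 + \dim S_2^\new(N)^+ - \dim S_2^\new(N)^-$, and a dimension inequality from \cite{me:dim} gives $\tr T_N \ge 1$.  Your approach is instead constructive: you embed $\Q(\sqrt{-N})$ into $B$, use the image of $\sqrt{-N}$ to realize $\frakP_N$ as a principal two-sided ideal in some maximal order $\calO'$, and transport the fixed point to $\Cl(\calO)$ via a connecting ideal.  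In fact the paper later acknowledges exactly this alternative (see the remark following \cref{prop:classmap2}, where the case $d=N$ recovers \cref{sigmaN-fixpt}), though without the explicit transfer step you supply.  Your route is more elementary in that it avoids the cited dimension bias and yields an explicit fixed ideal class; the paper's route is shorter once the dimension result is in hand and gives the extra information that the number of fixed points equals $1 + \dim S_2^\new(N)^+ - \dim S_2^\new(N)^-$, which feeds into the later count of trivial zeroes.
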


\begin{proof} 
By identifying $M(\calO)$ with $\C^h$ via 
$\phi \mapsto (\phi(x_1), \dots, \phi(x_h))$, we see from \eqref{Tpram} that we can
represent $T_N$ by a matrix which agrees with the matrix representation
for the permutation $\sigma_N$ of $\{ x_1, \dots, x_h \}$.  
In particular, $\tr T_N$ is the number of fixed points of $\sigma_N$.

Now
\[ \tr T_N = 1 + \dim S_2^\new(N)^+ - \dim S_2^\new(N)^-, \]
where $S_2^\new(N)^\pm$ is denotes the subspace of forms
with root number $\pm 1$.   However, \cite[Corollary 2.3]{me:dim}
tells us that $\dim S_2^\new(N)^+ \ge  \dim S_2^\new(N)^-$,
so $\tr T_N \ge 1$.  Hence $\sigma_N$ has fixed points.
\end{proof}

Note that this lemma is really special to $\sigma_N$---it is not valid for
a single $\sigma_p$ in general.  E.g., $\sigma_7$ from 
\cref{ex:154} acts with no fixed points.  More generally, see 
\cref{lem:fixed-pt}.

\begin{cor}  \label{cor35}
If $\eps$ is a sign pattern for $N$ such that $\eps_N = -1$,
then any eigenform in $M^\eps(\calO)$ has trivial zeroes.
\end{cor}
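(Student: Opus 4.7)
The plan is to combine \cref{sigmaN-fixpt} with the cycle-parity criterion for triviality of zeroes. Since $\eps$ is multiplicative and $\eps_N = \prod_{p|N} \eps_p = -1$, an odd number of the local signs $\eps_p$ are $-1$. The strategy is to exhibit, for some ideal class, a cycle in $\Sigma^\eps$ whose parity is exactly $\eps_N$, thereby forcing that class to be a trivial zero.

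First I would invoke \cref{sigmaN-fixpt} to obtain an ideal class $x \in \Cl(\calO)$ fixed by $\sigma_N = \prod_{p|N}\sigma_p$. Order the primes dividing $N$ as $p_1, \dots, p_m$ and consider the sequence of ideal classes
\[ x_{(0)} = x, \quad x_{(j)} = (\sigma_{p_j} \circ \cdots \circ \sigma_{p_1})(x) \quad (1 \le j \le m). \]
By hypothesis $x_{(m)} = \sigma_N(x) = x$, so this is a closed walk in the graph $\Sigma^\eps$: at step $j$, by the very definition of $\Sigma^\eps$, there is an edge between $x_{(j-1)}$ and $x_{(j)}$ labelled with sign $\eps_{p_j}$ (a loop if $x_{(j-1)}$ happens to be fixed by $\sigma_{p_j}$). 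All of $x_{(0)}, \dots, x_{(m)}$ lie in the same orbit $X_j$ of $\mathrm{Pic}(\calO)$, namely the orbit containing $x$.

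Next I would compute the parity of this cycle: it is the product of the edge signs, which is
\[ \eps_{p_1} \eps_{p_2} \cdots \eps_{p_m} = \eps_N = -1. \]
Thus the component containing $x$ contains a cycle of parity $-1$, so by definition this orbit is not $\eps$-admissible, and $x$ is a trivial zero for $\eps$. In particular, every $\phi \in M^\eps(\calO)$ satisfies $\phi(x) = 0$, which is the desired conclusion.

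There is no real obstacle here beyond unpacking the definitions: all the work is contained in \cref{sigmaN-fixpt}, which guarantees a fixed point of the composite involution, and in the observation that such a fixed point automatically produces a closed walk in $\Sigma^\eps$ whose parity is the global sign $\eps_N$. If one wanted a slightly cleaner statement, one could also remark that the argument gives at least one trivial-zero orbit per fixed point of $\sigma_N$, which via \cref{prop33} yields the quantitative bound $\dim M^{+_N}(\calO) - \dim M^\eps(\calO) \ge \#\mathrm{Fix}(\sigma_N)/({\sim})$, though this refinement is not needed for the corollary itself.
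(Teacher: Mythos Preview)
Your proof is correct and follows exactly the same idea as the paper's: the paper's one-sentence argument (``any fixed point of $T_N$ must be a trivial zero for $\eps$'') is precisely what you have unpacked, namely that a fixed point of $\sigma_N$ yields a closed walk in $\Sigma^\eps$ of parity $\prod_{p\mid N}\eps_p = \eps_N = -1$. Your version simply makes explicit the cycle construction that the paper leaves to the reader.
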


In other words, this says that any $\phi$ which corresponds to a newform
$f \in S_2(N)$ with root number $-1$ has trivial zeroes.

\begin{proof}
If $\eps_N = -1$, then any fixed point of $\sigma_N$ must be a trivial zero
for $\eps$.
\end{proof}

In the simple case that $N$ is prime and
$\calO$ is maximal, there are only two sign patterns, $+_N$ and $-_N$,
and we know that all eigenforms for $+_N$ have no trivial zeroes and
all eigenforms in $-_N$ have trivial zeroes.
When $N$ is not prime, the situation is more complicated, as we
explain presently.  

Denote by $a_n(f)$ the $n$-th Fourier coefficient of
an elliptic modular form $f \in M_2(N)$.
Let $E_{2,N}$ denote the normalized weight 2 Hecke eigenform
Eisenstein series in $M_2(N)$ with Fourier coefficients $a_p(E_{2,N}) = 1$ for each
$p | N$.  Put
\[ M_2^\new(N)^* = \C E_{2,N} \oplus S_2^\new(N). \]
By a newform in $M_2^\new(N)^*$, we will mean either a newform in $S_2^\new(N)$
or $E_{2,N}$. 
For $p | N$, let $W_p$ denote the Atkin--Lehner operator on $M_2^\new(N)^*$,
which is defined as usual on $S_2^\new(N)$ and and acts by $-1$ on $\C E_{2,N}$.  
Then $W_p(f) = -a_p(f)$ for each newform in $M_2^\new(N)^*$.

Let $\eps$ be a sign pattern for $N$.  Put
\[ M_2^{\new,\eps}(N)^* = \{ f \in M_2^\new(N)^* : W_p f = \eps(p) f \text{ for } p | N \}, \]
and similarly $S_2^{\new,\eps}(N) = M_2^{\new,\eps}(N)^* \cap S_2^\new(N)$.
Note $\dim M_2^{\new,\eps}(N)^*$ is simply $1+ \dim S_2^{\new,\eps}(N)$ if
$\eps = -_N$ and $\dim S_2^{\new,\eps}(N)$ otherwise.

Denote by $-\eps$  the sign pattern for $N$
which has the opposite signs as $\eps$ at all primes $p | N$.
Then the Jacquet--Langlands correspondence gives an isomorphism of full
Hecke modules
\[ M^\eps(\calO) \simeq M_2^{\new,-\eps}(N)^*. \]
Hence by \cref{prop33}, a form in $M^\eps(\calO)$ will have no trivial zeroes if and only if
$\eps = +_N$ or 
\begin{equation} \label{notriv0cond}
 \dim S_2^{\new,-\eps}(N) = 1 + \dim S_2^{\new, -_N}(N).
\end{equation}
The instances where \eqref{notriv0cond} holds with $\eps \ne +_N$ and $N < 100$ 
occur when  $N \in \{ 30, 42, 70, 78 \}$.  In these examples $\dim S_2^{\new}(N)=1$ but
$\dim S_2^{\new, -_N}(N) = 0$.  The unique newforms of these levels have root number
$+1$ with 1 Atkin--Lehner sign $-1$ and 2 Atkin--Lehner signs $+1$.  
The values of the associated quaternionic modular
forms are given in \cite[Appendix A]{jordan-thesis}, which tabulates quaternionic
eigenforms for maximal orders of levels $< 100$.
These forms have no zeroes, trivial or not.
Here is the next non-trivial instance when \eqref{notriv0cond} holds.

\begin{example} Let $N=105$.  Here $\dim S_2^\new(N) = 3$.  
We order the newforms $f_1, f_2, f_3 \in S_2(N)$ so that $f_1$ has rationality field
$\Q$ and $f_2, f_3$ are the Galois conjugate forms with rationality field $\Q(\sqrt 5)$.
Then $f_1$ has all 3 Atkin--Lehner signs $-1$, and $f_2, f_3$ both have Atkin--Lehner
signs $w_3 = w_5 = +1$ and $w_7 = -1$.

Taking $\calO$ to be a maximal order in the quaternion algebra $B$ of discriminant
$N$, we compute (in Magma or using class number and mass formulas) that
$h=4$ and each $e_i = 1$.  We may order $\Cl(\calO)$
so that a basis of eigenforms is given by the following table:
\begin{center}
\begin{tabular}{c|cccc|c|c|c}
& $x_1$ & $x_2$ & $x_3$ & $x_4$ & $\eps_3$ & $\eps_5$ & $\eps_{7}$ \\
\hline
$\one$ & 1 & 1 & 1 & 1 & $+$ & $+$ & $+$ \\
$\phi_1$ & 1 & 1 & $-1$ & $-1$ & $+$ & $+$ & $+$ \\
$\phi_2$ & 1 & $-1$ & $2-\sqrt 5$ & $-2 + \sqrt 5$ & $-$ & $-$ & $+$ \\
$\phi_3$ & 1 & $-1$ & $2+\sqrt 5$ & $-2 - \sqrt 5$ & $-$ & $-$ & $+$ \\
\end{tabular}
\end{center}
If we assume $f_2, f_3$ are ordered so that $a_2(f_2) = \sqrt 5$ and $a_2(f_3) = 
-\sqrt 5$, then $\phi_i$ corresponds to $f_i$ for each $1 \le i \le 3$.  Here we see that
\eqref{notriv0cond} holds with $\eps$ given by $(\eps_3, \eps_5, \eps_7) = (-, -, +)$,
and the eigenforms in $M^\eps(\calO)$ have no zeroes, trivial or not.
\end{example}

Denote by $\Delta_d$ the discriminant of $\Q(\sqrt{-d})$ and $h(\Delta_d)$ its
class number.

\begin{prop} \label{prop36}
Assume $N$ is odd, and
let $\eps \ne +_N$ be a sign pattern for $N$.  Let
$S$ be the set of divisors $d > 1$ of $N$ 
such that ${\Delta_d \leg p} = -1$ for all $p | \frac Nd$.
Then $\eps$ has no trivial zeroes if and only
if both of the following conditions hold:

(i) $\eps_d = 1$ for all $d \in S$; and

(ii) if $3 | N$, then $\eps_3 = 1$ or $N$ is divisible by some prime
$p \equiv 1 \mod 3$.
\end{prop}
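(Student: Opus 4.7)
The plan is to reduce the statement to a dimension identity and then analyze it via the refined Atkin--Lehner eigenspace dimension formulas of \cite{me:dim}. By \cref{prop33} and the Jacquet--Langlands isomorphism $M^\eps(\calO)\simeq M_2^{\new,-\eps}(N)^*$ just described, the condition that $\eps\ne +_N$ has no trivial zeroes is equivalent to
\[
\dim S_2^{\new,-\eps}(N)\;=\;1+\dim S_2^{\new,-_N}(N),
\]
the $+1$ on the right tracing back to the Eisenstein series $E_{2,N}\in M_2^{\new,-_N}(N)^*$. The entire argument is then about characterizing when this identity holds.

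Next I would invoke the formulas of \cite{me:dim}: for $N$ odd and squarefree and each sign pattern $\eta$, $\dim S_2^{\new,\eta}(N)$ equals a main term depending only on $N$ plus a sum of elliptic corrections indexed by divisors $d\mid N$ with $d>1$. Each correction at $d$ is a constant multiple of $h(\Delta_d)/w_d$ weighted by (a) a combinatorial factor in the signs $\eta_p$ for $p\mid d$ and (b) local factors over $p\mid N/d$ built from ${\Delta_d\leg p}$, which vanish unless every $p\mid N/d$ is inert in $\Q(\sqrt{-d})$---that is, unless $d\in S$. Subtracting, the main terms cancel and the required identity reduces to a linear equation in the $h(\Delta_d)/w_d$ for $d\in S$.

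The analysis of this equation splits by $d$. For $d\in S$ with $d\ne 3$ one has $w_d=1$ and $h(\Delta_d)\ge 1$, and the combinatorial coefficient at $d$ should be proportional to $(-\eps)(d)-(-1)^{\omega(d)}$, which vanishes iff $\eps_d=1$. For the residual integer $+1$ to survive cleanly, all such integer-valued corrections must cancel, yielding condition~(i). The case $d=3$ (occurring only when $3\mid N$) is genuinely special: since $w_3=3$ and $h(-3)=1$, the correction is the unique fractional $\tfrac{1}{3}$-contribution, and for it to disappear we need either its combinatorial coefficient to vanish (i.e.\ $\eps_3=1$) or its local factor already to vanish---equivalently, some $p\mid N/3$ with ${-3\leg p}=1$, meaning $p\equiv 1\pmod 3$ and $3\notin S$. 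These are precisely the two clauses of condition~(ii).

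The main obstacle will be the delicate sign and coefficient bookkeeping extracted from \cite{me:dim}: verifying that the residual $+1$ survives correctly, that no accidental combinations among the $d$-contributions solve the identity outside (i) and (ii), and that the isolated fractional $\Delta_{-3}$ term really behaves as described. I would calibrate signs and constants on small test cases such as $N=3\cdot 5\cdot 7$ (where $7\equiv 1\pmod 3$, so $3\notin S$) and $N=3\cdot 5\cdot 11$ (where no $p\mid N$ is $\equiv 1\pmod 3$, so $3\in S$) to fix conventions before writing the final argument.
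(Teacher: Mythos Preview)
Your outline is essentially the paper's own approach: reduce to a dimension comparison via \cref{prop33} and the Jacquet--Langlands identification, then invoke the refined formulas of \cite{me:dim}. The paper in fact works directly with $\dim M^{+_N}(\calO)-\dim M^{\eps}(\calO)$ rather than passing to $S_2^{\new,-\eps}(N)$, but this is cosmetic.

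Where you diverge is in the endgame. You anticipate ``delicate sign and coefficient bookkeeping'' and worry about ``accidental combinations among the $d$-contributions,'' planning to calibrate on test cases. The paper avoids all of this with a single observation you are missing: \emph{positivity}. The formula from \cite{me:dim} gives
\[
2^{\omega(N)}\bigl(\dim M^{+_N}(\calO)-\dim M^{\eps}(\calO)\bigr)
=\tfrac12\sum_{1<d\mid N}(1-\eps_d)\,h'(\Delta_d)\,b(d,N/d)\prod_{p\mid N/d}\Bigl(1-\Bigl(\tfrac{\Delta_d}{p}\Bigr)\Bigr)
+\delta_{3\mid N}\,\tfrac{1-\eps_3}{3}\prod_{p\mid N/3}\Bigl(1-\Bigl(\tfrac{-3}{p}\Bigr)\Bigr),
\]
and because $N$ is odd, every factor in every term is $\ge 0$: $(1-\eps_d)\in\{0,2\}$, $h'(\Delta_d)>0$, $b(d,N/d)\in\{1,2,4\}$, and each Euler factor is $0$ or $2$. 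Hence the whole expression vanishes iff each term vanishes individually---no cancellation analysis needed. Condition~(i) is exactly the vanishing of every term in the big sum (the product over $p\mid N/d$ is nonzero iff $d\in S$), and condition~(ii) is exactly the vanishing of the separate $\delta_{3\mid N}$ term. Your fractional/integer dichotomy at $d=3$ and the worry about the residual $+1$ are symptoms of not having this positivity; once you see it, the proof is two lines.
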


\begin{proof} It follows from Propositions 1.4 and 3.2 of \cite{me:dim}
(or more directly from Theorem 3.3 of \emph{op.\ cit.} if $3 \nmid N$) 
that $2^{\omega(N)}(\dim M^{+_N}(\calO) - \dim M^{\eps}(\calO))$ 
equals
\begin{equation} \label{eq:dim-bias}
\frac 12 \sum_{1 < d | N} (1 - \eps_d) h'(\Delta_d)
b(d, N/d) \prod_{p | \frac Nd} \left( 1 - {\Delta_d \leg p} \right) 
+ \delta_{3|N} \frac{(1 - \eps_3)}{3} \prod_{p | \frac N3} 
\left( 1 - {-3 \leg p} \right). 
\end{equation}
Here we use the same notation as in  \emph{op.\ cit.}:
each $h'(\Delta_d)$ is a weighted class number and $b(d,N/d)$ is
1, 2 or 4 depending only on $d \mod 8$.  For $N$ odd,
$b(d,N/d) = b(d,1)$, where $b(d,1)$ is given by \eqref{eq:bN} below.
In addition $\delta_{3|N}$ means 1 if $3|N$ and 0 otherwise.
The point is that each term in the above sum is $\ge 0$, so
$\dim M^{+_N}(\calO) = \dim M^{\eps}(\calO)$ if and only if
each term is 0.  
Condition (i) is equivalent to each term in the
first sum being 0, and condition (ii) is equivalent to the final term being 0.
\end{proof}

\begin{rem} (a) Since these conditions are complicated, it is not
obvious how frequently they are satisfied.  In Sage, we computed these
conditions for the 820 odd squarefree levels $N < 10000$ which 
are products of 3 primes.\footnote{Code may be found at:
\url{https://math.ou.edu/~kmartin/data/}}
Of these levels,
465 have at least one sign pattern $\eps \ne +_N$ with no trivial
zeroes.  The total number of such sign patterns 559.  Thus it appears
relatively common that for (non-prime) $N$ as above there is at
least one sign pattern besides $+_N$ with no trivial zeroes.  

(b) While we still have similar dimension formulas
when $N$ is even, the issue with the above argument is that the 
constants $b(d,N/d)$ are $-1$, $-2$ or 0 if $N/d$ is even, so one cannot
use a positivity argument.  We note that of the 980 even levels $N \in \Sq_3$
with $N < 10000$, 372 have a sign pattern $\eps \ne +_N$ with no trivial zeroes.
\end{rem}
 
There is a simpler sufficient condition (in addition to the
root number condition) which allows us to say certain sign patterns
must have trivial zeroes.  We recall the following.

\begin{lem} (\cite[Lemma 4.3]{me:cong2}) \label{lem:fixed-pt}
Let $p | N$.  Then $\sigma_p$
acts on $\Cl(\calO)$ without fixed points if and only if

\begin{enumerate}
\item $p$ is odd and ${-p \leg q} = 1$ for some odd prime $q | N$;

\item $p \equiv 7 \mod 8$ and $N$ is even; or

\item $p=2$, ${-2 \leg q} = 1$ for some prime $q | N$, and $N$ has
a prime factor which is $1 \mod 4$.
\end{enumerate}
\end{lem}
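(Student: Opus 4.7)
The plan is to reinterpret a fixed point of $\sigma_p$ as a specific element of $B$, reduce its existence to the embeddability of an imaginary quadratic field in $B$, and translate this into splitting conditions at primes dividing $N$.

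First I would unpack the definition: the ideal class represented by $x_i \in \hat B^\times$ is fixed by $\sigma_p$ if and only if there exist $\gamma \in B^\times$ and $u \in \hat\calO^\times$ with $\gamma x_i = x_i \hat\varpi_p u$. Examining this relation place by place forces $\gamma \in \calO_l(\calI_i)$ with reduced norm $N(\gamma) = p$ (using definiteness of $B$), and $\gamma_p$ a local uniformizer of $\calO_l(\calI_i)_p$; conversely any such $\gamma$ produces a fixed point.

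Next I would classify such $\gamma$ via its minimal polynomial $X^2 - tX + p$ (with $t^2 < 4p$). Because $B_p$ is a division algebra and $\gamma_p$ has local valuation $1$, the subfield $\Q_p(\gamma) \subset B_p$ must be a ramified quadratic extension of $\Q_p$. For $p$ odd this forces $p \mid t$, hence $t = 0$ (the tiny case $p = 3$ with $t = \pm 3$ still yields the same field), so $K := \Q(\gamma) \simeq \Q(\sqrt{-p})$. For $p = 2$ the analogous local analysis restricts $t$ to $\{0, \pm 2\}$, giving $K \in \{\Q(\sqrt{-2}), \Q(i)\}$.

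Finally I would invoke Eichler's theorem on optimal embeddings: the existence of a suitable $\gamma$ in \emph{some} left order $\calO_l(\calI_i)$ is equivalent to a positive count of optimal embeddings of the relevant quadratic order into some maximal order of $B$, which (since $\calO$ is maximal and $B$ is definite) reduces to the global embeddability of $K$ in $B$ --- i.e., no prime $q \mid N$ splits in $K$. Thus $\sigma_p$ is fixed-point-free iff \emph{every} candidate $K$ has some $q \mid N$ that splits in it. Translating splitting into residue-symbol conditions: for $p$ odd, splitting at an odd $q \mid N$ with $q \ne p$ is ${-p \leg q} = 1$ (condition (1)), and splitting at $q = 2$ amounts to $p \equiv 7 \pmod 8$ (condition (2)); for $p = 2$ we need both $\Q(\sqrt{-2})$ and $\Q(i)$ to fail to embed, yielding condition (3). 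The main obstacle is this last reduction, where one must carefully apply the optimal-embedding machinery to convert the existence of $\gamma$ summed across all ideal classes into the clean criterion that $K$ embeds in $B$.
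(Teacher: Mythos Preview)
The paper does not give its own proof of this lemma; it is quoted verbatim from \cite[Lemma 4.3]{me:cong2} and used as a black box. So there is no in-paper argument to compare against.

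That said, your sketch is the standard approach and is essentially correct. The translation of a $\sigma_p$-fixed point into the existence of $\gamma \in \calO_l(\calI_i)$ with $N(\gamma)=p$ and $v_{B_p}(\gamma)=1$ is right, as is the deduction that $\Q_p(\gamma)/\Q_p$ must be ramified (an element of odd $B_p$-valuation cannot lie in the unramified quadratic subfield). For odd $p$ this forces $t=0$ and $K=\Q(\sqrt{-p})$; for $p=2$ you correctly exclude $t=\pm 1$ (since $2$ splits in $\Q(\sqrt{-7})$, which cannot embed in the division algebra $B_2$) and are left with $K\in\{\Q(\sqrt{-2}),\Q(i)\}$. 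The final step---that such a $\gamma$ exists in \emph{some} left order iff $K$ embeds in $B$, i.e.\ iff no $q\mid N$ splits in $K$---is exactly the content of the local-global principle for embeddings into maximal orders; since every maximal order of $B$ is isomorphic to some $\calO_l(\calI_i)$, this goes through. The residue-symbol translation then gives (1)--(3).

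One small point worth making explicit in a full write-up: for the converse direction when $p\equiv 3\pmod 4$, you need $\Z[\sqrt{-p}]$ (not just $\frako_K$) to sit inside some maximal order; but this is automatic once $K\hookrightarrow B$, since $\sqrt{-p}$ is integral and hence lies in some maximal order.
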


Thus if some $p | N$ does not satisfy any of these conditions, i.e., if $\sigma_p$ acts with
fixed points, then any sign pattern $\eps$ for $N$ with $\eps_p =-1$ 
has trivial zeroes (and the number of trivial zeroes is at least the
number of fixed points of $\sigma_p$).

\subsection{Counting trivial zeroes} \label{sec33}
Recall that $\calO$ is maximal.

\begin{prop} \label{triv0-bound}

(i) Fix $r$ odd.  
As $N \to \infty$ among elements of $\Sq_r$, the maximum number of 
trivial zeroes for a sign pattern for $N$ is $\ll_r N^{\frac 12} \log N$.

(ii) Fix $\epsilon > 0$.  As $N \to \infty$ along $\Sq_\odd$, the
maximum number of trivial zeroes for a sign pattern for $N$ is
$O(N^{\frac 12 + \epsilon})$.
\end{prop}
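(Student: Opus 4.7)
The plan is to translate the count of trivial zeroes into a dimension difference via \cref{prop33}, expand this difference using the explicit formula \eqref{eq:dim-bias} from the proof of \cref{prop36}, and finish with the classical upper bound $h(\Delta_d) \ll \sqrt{d} \log d$ for imaginary quadratic class numbers.

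First I would note that by \cref{prop33} the number of $\mathrm{Pic}(\calO)$-orbits on which a sign pattern $\eps$ has trivial zeroes equals $\dim M^{+_N}(\calO) - \dim M^{\eps}(\calO)$.  Since every $\mathrm{Pic}(\calO)$-orbit in $\Cl(\calO)$ has cardinality dividing $|\mathrm{Pic}(\calO)| = 2^{\omega(N)} = 2^r$, the number of trivial zeroes (counted as ideal classes in $\Cl(\calO)$) is bounded above by $2^r\bigl(\dim M^{+_N}(\calO) - \dim M^{\eps}(\calO)\bigr)$.

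Next I would apply \eqref{eq:dim-bias} with the trivial bounds $|1 - \eps_d| \le 2$, $b(d, N/d) \le 4$, and $\prod_{p | N/d}(1 - (\Delta_d/p)) \le 2^{\omega(N/d)}$ to get
\[ 2^{\omega(N)}\bigl(\dim M^{+_N}(\calO) - \dim M^{\eps}(\calO)\bigr) \ll \sum_{1 < d | N} h'(\Delta_d)\, 2^{\omega(N/d)} + 2^{\omega(N)}. \]
Using $h'(\Delta_d) \le h(\Delta_d) \ll \sqrt{d} \log d$ and the identity $\omega(N/d) = r - \omega(d)$ for $d | N \in \Sq_r$, the sum on the right is at most
\[ \sqrt{N} \log N \cdot 2^r \sum_{d | N} 2^{-\omega(d)} = 2^r (3/2)^r \sqrt{N} \log N = 3^r \sqrt{N} \log N, \]
so the number of trivial zeroes is $\ll 3^r \sqrt{N} \log N$.

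For fixed $r$ this immediately gives (i).  For (ii), the standard estimate $\omega(N) \ll \log N / \log\log N$ yields $3^{\omega(N)} = e^{O(\log N/\log\log N)} = O(N^{\epsilon/2})$ for any $\epsilon > 0$, and combined with $\log N \ll N^{\epsilon/2}$ this gives $O(N^{1/2 + \epsilon})$.  The only real pitfall is that the factor $2^{\omega(N)}$ converting orbits to ideal classes could in principle swallow the square-root savings from the class number bound; however, the divisor sum $\sum_{d | N} 2^{-\omega(d)} = (3/2)^{\omega(N)}$ precisely offsets this, leaving only the benign factor $3^{\omega(N)}$.  Apart from the classical Dirichlet bound on $h(\Delta_d)$, no deep ingredient is required, and I expect the argument to be entirely routine divisor-sum bookkeeping.
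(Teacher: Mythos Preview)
Your proposal is correct and follows essentially the same route as the paper: convert trivial zeroes to the dimension difference via \cref{prop33}, bound orbit sizes by $2^{\omega(N)}$, estimate \eqref{eq:dim-bias} using the class-number bound $h(\Delta_d)\ll\sqrt{d}\log d$, and finish by showing the resulting exponential-in-$r$ factor is $O(N^\epsilon)$.  The only point you gloss over is that \eqref{eq:dim-bias} was derived in \cref{prop36} under the hypothesis that $N$ is odd; for even $N$ the coefficients $b(d,N/d)$ can be negative or zero and the formula acquires a bounded correction term (as the paper remarks), so strictly speaking you should note that a formula of the same shape persists with $|b(d,N/d)|\le 4$, which is all your bound needs.
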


For comparison, we note that the total number $h$ of values for a
quaternionic modular form in $M(\calO)$ is asymptotic to
$\frac{\phi(N)}{12}$, where here $\phi$ is the Euler totient function.

\begin{proof}
Let $N \in \Sq_r$ and $\eps$ be a sign pattern for $N$.
Recall the number of $\mathrm{Pic}(\calO)$-orbits $X_j$ on which $\eps$ has trivial zeroes is
precisely $\dim M^{+_N}(\calO) - \dim M^\eps(\calO)$.
Moreover, since each orbit is generated by $r$ commuting involutions,
each orbit has size at most $2^r$.  Hence for (i) it suffices to show
$\dim M^{+_N}(\calO) - \dim M^\eps(\calO) = O(N^{1/2} \log N)$.
Up to the addition of a bounded term when $N$ is even, this difference
of dimensions is still given by a formula of the form \eqref{eq:dim-bias}
(cf.\ \cite{me:dim}), and thus for a suitable constant $C$ is bounded by
\[ 2^r C \sum_{d | N} h(\Delta_d) \ll 4^r N^{\frac 12} \log N. \]
Here we used the standard upper bound $h(\Delta_d) \ll \sqrt d \log d \le \sqrt N \log N$
and the fact that $N$ has $2^r$ divisors.  This gives (i).  

To get (ii), it suffices to show that $4^{r} = O(N^\epsilon)$ where $r=\omega(N)$
and $N$ is squarefree.  Clearly $N \ge p_1 \dots p_r$, where $p_i$ is the $i$-th
prime number.  The growth of the primorial function $\prod_{i \le r} p_i$ is known
to be $e^{(1+o(1)) r \log r}$, thus $N$ grows at least as fast as $r^r$. 
From this it follows that $4^r = O(N^\epsilon)$ for any $\epsilon > 0$.
\end{proof}

\begin{rem} In fact one can say a bit more about the sizes of orbits of $\mathrm{Pic}(\calO)$
and trivial zeroes.  First each orbit has size $2^j$ for some $j \le r$ (see 
\cite[Section 4.4]{me:cong2}).  One can show that an orbit of size $2^j$
is inadmissible for some sign pattern $\eps$ if and only if $j < r$.
So one can replace $2^r$ by $2^{r-1}$ in the bound in the above proof.
Further, from the asymptotic equidistribution of sign patterns (see \emph{op.\ cit.}) 
one sees
that almost all orbits must have maximal size $2^r$ as 
$N \to \infty$ along $\Sq_r$.
\end{rem}

Let us now treat the case where $N$ is prime in more detail.  
In this case, there is only a single local involution
$\sigma_N$ to consider, and we know from 
\cref{sigmaN-fixpt} (or \cref{lem:fixed-pt}) that $\sigma_N$
necessarily has fixed points.  Moreover the sign pattern $+_N$ has
no trivial zeroes, and the number of trivial zeroes of $-_N$ must
be the number of fixed points of $\sigma_N$.

For $d$ odd, define
\begin{equation} \label{eq:bN}
 b(d,1) = \begin{cases}
1 & d \equiv 1 \mod 4 \\
2 & d \equiv 7 \mod 8 \\
4 & d \equiv 3 \mod 8.
\end{cases}
\end{equation}

\begin{prop} \label{prop:nfixedpoints}
For $N > 3$ prime, the sign pattern $-_N$ has
exactly $\frac 12 h(\Delta_N) b(N,1)$ trivial zeroes.
\end{prop}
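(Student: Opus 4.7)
The plan is to combine a direct analysis of the signed graph $\Sigma^{-_N}$ with the dimension identity used in the proof of \cref{prop36}.

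First, I would analyze $\Sigma^{-_N}$ in the prime-level case. Since $N$ is prime, there is a single involution $\sigma_N$ acting on $\Cl(\calO)$, so every $\mathrm{Pic}(\calO)$-orbit has size $1$ or $2$. In $\Sigma^{-_N}$ all edges carry sign $\eps_N=-1$. A size-$2$ orbit $\{x,\sigma_N(x)\}$ has a single non-loop edge, so every closed walk traverses it an even number of times and has parity $(-1)^{\text{even}}=+1$; such an orbit is $-_N$-admissible. A size-$1$ orbit $\{x\}$ (i.e.\ a fixed point of $\sigma_N$) carries a signed loop of sign $-1$, which is itself a cycle of parity $-1$, so such an orbit is $-_N$-inadmissible. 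Hence the $-_N$-inadmissible orbits are in bijection with the fixed points of $\sigma_N$, and each contributes exactly one trivial zero.

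Next, by \cref{prop33}, the total number of trivial zeroes equals $\dim M^{+_N}(\calO) - \dim M^{-_N}(\calO)$. To evaluate this difference I would invoke formula \eqref{eq:dim-bias} from the proof of \cref{prop36}. Since $N>3$ is prime, the only divisor $d>1$ of $N$ is $d=N$ itself, the product $\prod_{p\mid N/d}$ is empty, and the Kronecker delta $\delta_{3\mid N}$ vanishes. With $\eps_N=-1$ and $2^{\omega(N)}=2$, the identity collapses to
\[
\dim M^{+_N}(\calO) - \dim M^{-_N}(\calO) \;=\; \tfrac{1}{2}\, h'(\Delta_N)\, b(N,1).
\]

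Finally, for $N>3$ prime we have $|\Delta_N|\geq 7$, so the imaginary quadratic field $\Q(\sqrt{-N})$ has unit group $\{\pm 1\}$ and the weighted class number coincides with the usual one: $h'(\Delta_N)=h(\Delta_N)$. Combining this with the previous two steps yields the claimed count $\tfrac12 h(\Delta_N)b(N,1)$. The only place requiring care is the parity argument in the first step (ensuring size-$2$ orbits are always admissible and fixed-point orbits are always inadmissible); after that, the dimension formula does the rest, and the restriction $N>3$ is used precisely to discard both the $3\mid N$ correction term and the root-of-unity discrepancy between $h'$ and $h$.
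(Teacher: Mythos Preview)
Your proposal is correct and follows essentially the same route as the paper: identify the trivial zeroes for $-_N$ with the fixed points of $\sigma_N$, express their count as $\dim M^{+_N}(\calO)-\dim M^{-_N}(\calO)$, and then evaluate this via \eqref{eq:dim-bias} (or \cite[Theorem 2.2]{me:dim}). Your write-up is in fact more explicit than the paper's---you spell out the graph analysis showing size-$2$ orbits are admissible and singletons are not, and you note the $h'(\Delta_N)=h(\Delta_N)$ identification for $N>3$---but be slightly careful that \cref{prop33} literally counts inadmissible \emph{orbits} rather than trivial zeroes, so your step~1 (that each inadmissible orbit is a singleton) is what converts that orbit count into the trivial-zero count.
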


\begin{proof}
The number of fixed points of $\sigma_N$ is
$\dim M^{+_N}(\calO) - \dim M^{-_N}(\calO)$.
This is precisely $\frac 12 h(\Delta_N) b(N,1)$,
which is a special case of either \eqref{eq:dim-bias}
or \cite[Theorem 2.2]{me:dim}.
\end{proof}

Note that this exact formula in the prime level case tells us that
\cref{triv0-bound} is close to being sharp.  

\begin{cor} \label{triv0-prime-lb}
For any $\epsilon > 0$, as $N \to \infty$ along primes,
the number of trivial zeroes for the sign pattern $-_N$ is
$\gg N^{\frac 12 - \epsilon}$.
\end{cor}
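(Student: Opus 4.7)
The plan is to use the exact formula from \cref{prop:nfixedpoints} and combine it with a classical lower bound on class numbers of imaginary quadratic fields.

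First, I would invoke \cref{prop:nfixedpoints}, which says that for prime $N > 3$ the number of trivial zeroes for $-_N$ is exactly $\frac{1}{2} h(\Delta_N) b(N,1)$. Since $b(N,1) \in \{1,2,4\}$, we certainly have $b(N,1) \ge 1$, so the number of trivial zeroes is $\ge \frac{1}{2} h(\Delta_N)$. Thus the whole matter reduces to finding a good lower bound for $h(\Delta_N)$.

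Next I would bring in Siegel's theorem. For $N$ prime, $\Delta_N$ is the discriminant of $\Q(\sqrt{-N})$, so $|\Delta_N|$ equals $N$ or $4N$ depending on $N \bmod 4$; in particular $|\Delta_N| \asymp N$. The analytic class number formula for imaginary quadratic fields gives
\[ h(\Delta_N) = \frac{\sqrt{|\Delta_N|}}{\pi} L(1, \chi_{\Delta_N}) \]
(up to the standard correction for $|\Delta_N| \in \{3,4\}$, which is irrelevant here). Siegel's theorem says that for any $\epsilon > 0$, $L(1, \chi_{\Delta_N}) \gg_\epsilon |\Delta_N|^{-\epsilon}$, where the implied constant is ineffective. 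Combining these,
\[ h(\Delta_N) \gg_\epsilon |\Delta_N|^{\frac{1}{2} - \epsilon} \gg_\epsilon N^{\frac{1}{2} - \epsilon}, \]
as $N \to \infty$ along primes.

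Putting the two ingredients together yields the claimed lower bound on the number of trivial zeroes. There is no real obstacle here: the substance of the statement is already captured by \cref{prop:nfixedpoints}, and the Siegel bound is a black-box input, with the only mild wrinkle being that the implied constant in Siegel's theorem is ineffective (which does not affect the stated asymptotic inequality).
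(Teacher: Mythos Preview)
Your proposal is correct and is exactly the intended argument: the paper states this corollary without proof immediately after \cref{prop:nfixedpoints}, so the implicit proof is precisely to combine that exact formula with Siegel's lower bound $h(\Delta_N) \gg_\epsilon |\Delta_N|^{1/2-\epsilon}$, just as you do. There is nothing to add.
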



\section{Bounds on nontrivial zeroes}


As before, $\calO \subset B$ is a maximal order of level $N$.
We first note the following trivial upper bounds 
on the number of zeroes of an arbitrary (not necessarily eigen) form $\phi$.

\begin{lem} 
Let $\phi \in S(\calO)$ be nonzero.  Then $\phi$ is nonzero on at least
$\max\{2, \frac{(\phi, \phi)}{\norm{\phi}^2} \}$ elements of $\Cl(\calO)$.
\end{lem}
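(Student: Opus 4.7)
The plan is to split the bound into its two parts and handle each separately, as the pieces have very different flavors.

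First I would verify $|\mathrm{supp}(\phi)| \ne 1$, which gives the lower bound of $2$. By definition of $S(\calO)$, any cusp form satisfies the Eisenstein orthogonality relation $\sum_{i=1}^h \frac{1}{e_i}\phi(x_i) = 0$. If $\phi$ were supported on a single ideal class $x_j$, this relation would read $\frac{1}{e_j}\phi(x_j) = 0$, forcing $\phi(x_j) = 0$ and contradicting $\phi \ne 0$. Since $\phi$ is nonzero, the support has size at least $2$.

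Next I would obtain the bound $|\mathrm{supp}(\phi)| \ge (\phi,\phi)/\norm{\phi}^2$, interpreting $\norm{\phi}$ as the sup norm $\max_i |\phi(x_i)|$ (the natural choice making the ratio dimensionally a count). Let $S = \{i : \phi(x_i) \ne 0\}$. Since each $e_i = [\calO_l(\calI_i)^\times : \Z^\times] \ge 1$, we have
\[ (\phi,\phi) = \sum_{i \in S} \frac{1}{e_i}|\phi(x_i)|^2 \;\le\; \sum_{i \in S} |\phi(x_i)|^2 \;\le\; |S|\cdot \max_{i \in S}|\phi(x_i)|^2 \;=\; |S|\cdot \norm{\phi}^2. \]
Rearranging gives the desired inequality.

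Combining the two, $|\mathrm{supp}(\phi)| \ge \max\{2,(\phi,\phi)/\norm{\phi}^2\}$. There is no real obstacle here; the only subtlety is the conceptual one of recognizing that the cuspidal condition (which is an averaging-to-zero condition with weights $1/e_i$) is precisely what rules out $|S| = 1$, while the second bound is a purely formal consequence of the $L^2$ versus $L^\infty$ comparison together with $e_i \ge 1$.
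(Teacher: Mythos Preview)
Your proof is correct and follows essentially the same approach as the paper: the bound of $2$ comes from the cuspidality condition $(\phi,\one)=\sum e_i^{-1}\phi(x_i)=0$, and the bound $(\phi,\phi)/\norm{\phi}^2$ comes from the chain $(\phi,\phi)\le \sum_i |\phi(x_i)|^2 \le |\mathrm{supp}(\phi)|\cdot \norm{\phi}^2$, using $e_i\ge 1$ and the sup norm interpretation of $\norm{\phi}$.
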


\begin{proof} 
Let $n_\phi$ denote the number of zeroes of $\phi$.
The lower bound of $\frac{(\phi, \phi)}{\norm{\phi}^2}$ follows
from $(\phi, \phi) \le \sum |\phi(x_i)|^2 \le (h-n_\phi){\norm{\phi}^2}$ and
the lower bound of $2$ follows from 
from $(\phi, \one) = \sum \frac 1{e_i} \phi(x_i) = 0$.
\end{proof}

We also note that if $\phi \in S(\calO)$ is a nonzero eigenform, we may
scale its entries to be real.  Then at least one value must be strictly positive
and at least one value must be strictly negative.

By sup norm bounds \cite{blomer-michel}, for an eigenform $\phi$ 
we have $\frac{(\phi, \phi)}{\norm{\phi}^2} \gg h^{\delta}$ for
any $\delta < \frac 1{24}$.
This gives nontrivial bounds for the number of zeroes of an eigenform,
but in fact we expect something much stronger is true.

\subsection{Galois orbits of eigenforms}
Let $\phi \in M(\calO)$ be an eigenform, and $f \in S_2(N)$ the corresponding
newform.  For any $n \ge 1$, let $\lambda_n$
be the $T_n$-eigenvalue for $\phi$.   Let $K_\phi$ be the field generated
by all Hecke eigenvalues $\lambda_n$.  By \eqref{eq:JL}, we know each
$\lambda_n = a_n(f)$, and thus $K_\phi = K_f$ is also the rationality field of $f$.

We normalize $\phi$ so that all of its values lie in $K_\phi$.
To see that this is possible, 
let $V$ be the space of all $K_\phi$-valued functions in $M(\calO)$.  Note that
each $T_{n}$ acts on $V$, since each $T_n$ is integral (i.e., every value of $T_n\phi$
is an $\Z$-linear combination of values of $\phi$).  
Let $V_0 \subset V$ be the intersection of
all of the $\lambda_{n}$-eigenspaces for the $T_{n}$'s.  Note that
$\dim_{K_\phi} V_0 = \dim_{\C} V_0 \otimes \C$.  The latter space is 1-dimensional
by \eqref{eq:JL} and is spanned by $\phi$, so some nonzero scalar multiple of $\phi$ lies in 
$V_0$, i.e., takes values in $K_\phi$.  While not necessary at present, we may also 
scale $\phi$ so that its values lie in the integer ring $\frako_{K_\phi}$.  
We also note that $K_\phi$ is the minimal number field which can contain
all values of a nonzero scalar multiple of $\phi$, as every $\lambda_n$ must lie must lie
in the field generated by the values of $\phi$ by the integrality of the Hecke operators.

Given an element $\tau \in \Gal(\C/\Q)$,
the Galois conjugate form $\phi^\tau(x) := \tau(\phi(x))$ 
is an eigenvector for each $T_n$ with eigenvalue $\tau(\lambda_n)$.
Hence $\Gal(\C/\Q)$ acts on (lines of) eigenforms, and partitions the set of 
(lines of) eigenforms into Galois orbits.
Writing $K_\phi = \Q(\alpha)$ for some algebraic number $\alpha$, we see the
number of distinct Galois conjugates $\phi^\tau$ equals the number of embeddings
of $\alpha$ into $\C$, i.e., 
the degree $[K_\phi : \Q]$ of the minimal polynomial of 
$\alpha$.  In other words, the Galois orbit $\{ \phi^\tau : \tau \in \Gal(\C/\Q) \}$ of 
$\phi$ has size
$\deg(\phi) := [K_\phi : \Q]$, which is the (rationality) degree of $\phi$ (or of $f$).

Clearly the zeroes of an eigenform $\phi$ are the same as the zeroes of any 
conjugate $\phi^\tau$.   Also note that, since the full Hecke algebra
is commutative, if $\phi \in M^\eps(\calO)$, then each conjugate $\phi^\tau
\in M^\eps(\calO)$.

\subsection{Fundamental domains of eigenforms}
For a sign pattern $\eps$ for $N$, let $\calF_\eps$ be a set of representatives for the $\eps$-admissible orbits of $\mathrm{Pic}(\calO)$ acting on $\Cl(\calO)$.
Thus $\# \calF_\eps = \dim M^\eps(\calO)$, and any $\phi \in M^\eps(\calO)$
is determined by its values on $x \in \calF_\eps$.  Hence we think of
$\calF_\eps$ as a fundamental domain for functions in $M^\eps(\calO)$.

\begin{prop}\label{prop42}
 Suppose $\phi \in M^\eps(\calO)$ is a degree $d$ eigenform.
Then there are at most $\# \calF_\eps - d$ elements
$x \in \calF_\eps$ such that $\phi(x) = 0$.
\end{prop}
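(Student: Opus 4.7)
The plan is to combine three facts that have already been set up in the preceding subsections: (a) that the Galois conjugates of $\phi$ all lie in $M^\eps(\calO)$ and have exactly the same zero set as $\phi$; (b) that they span a $d$-dimensional subspace of $M^\eps(\calO)$; and (c) that elements of $M^\eps(\calO)$ are determined by their restriction to $\calF_\eps$. Combining these via a simple linear-algebra / dimension count gives the bound.

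More concretely, let $d = \deg \phi = [K_\phi : \Q]$, let $\bar K_\phi$ be its Galois closure, and choose representatives $\tau_1,\dots,\tau_d \in \Gal(\bar K_\phi/\Q)$ for the cosets modulo $\Gal(\bar K_\phi/K_\phi)$. Set $\phi_i = \phi^{\tau_i}$ and let $V = \mathrm{span}_\C\{\phi_1,\dots,\phi_d\} \subseteq M(\calO)$. As observed in the subsection on Galois orbits, each $\phi_i$ lies in $M^\eps(\calO)$ (since $T$ commutes with the $T_{\hat\varpi_p}$), and the $\phi_i$ have $d$ distinct eigenvalues under $T$, hence are linearly independent. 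Thus $\dim V = d$ and $V \subseteq M^\eps(\calO)$.

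Next, for any $x \in \Cl(\calO)$ we have $\phi_i(x) = \tau_i(\phi(x))$, so $\phi_i(x) = 0$ if and only if $\phi(x) = 0$. Let $Z = \{x \in \calF_\eps : \phi(x) = 0\}$; then every element of $V$ vanishes on $Z$. Consider the evaluation map
\[
\mathrm{ev} : V \longrightarrow \C^{\calF_\eps}, \qquad \psi \longmapsto (\psi(x))_{x \in \calF_\eps}.
\]
Because $\calF_\eps$ is a fundamental domain for $M^\eps(\calO)$ (i.e.\ any element of $M^\eps(\calO)$ is determined by its values on $\calF_\eps$), this map is injective. Its image is contained in the coordinate subspace $\{v \in \C^{\calF_\eps} : v_x = 0 \text{ for all } x \in Z\}$, which has dimension $\#\calF_\eps - \#Z$. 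Injectivity then forces $d \le \#\calF_\eps - \#Z$, i.e.\ $\#Z \le \#\calF_\eps - d$, which is the claim.

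There is no real obstacle here; the only things to double-check are that the Galois conjugates really do preserve the sign-pattern eigenspace (already noted in the Galois orbits subsection) and that evaluation on $\calF_\eps$ is injective on $M^\eps(\calO)$ (which is immediate from the definition of $\calF_\eps$ as a set of representatives for the $\eps$-admissible orbits, together with the fact that on any $\eps$-admissible orbit the value of a form in $M^\eps(\calO)$ at one point determines its value at every other point of the orbit up to a sign).
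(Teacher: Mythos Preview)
Your proof is correct and follows essentially the same approach as the paper's own proof: both use that the $d$ Galois conjugates of $\phi$ span a $d$-dimensional subspace of $M^\eps(\calO)$ with common support, and then bound this dimension by the number of nonvanishing points in $\calF_\eps$. Your version simply makes the linear algebra step more explicit via the evaluation map, whereas the paper states the dimension bound directly.
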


\begin{proof}
The $d$ Galois conjugates $\phi_1, \dots, \phi_d$ of $\phi$ generate a
$d$-dimensional subspace of $M^\eps(\calO)$.
On the other hand, since each $\phi_i$ has the same support as $\phi$, 
the dimension of the span of $\phi_1, \dots, \phi_d$
is at most the number of $x \in \calF_\eps$ such that $\phi(x) \ne 0$.
\end{proof}

Since each $\eps$-admissible orbit has size at most $2^{\omega(N)}$
ideal classes, we conclude the following.

\begin{cor} \label{cor43}
Suppose $\phi \in M^\eps(\calO)$ is a degree $d$ eigenform.  Then
$\phi$ has at most $2^{\omega(N)} (\dim M^\eps(\calO) - d)$ nontrivial zeroes.
\end{cor}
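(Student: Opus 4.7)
The plan is to deduce this directly from Proposition \ref{prop42} together with the two simple structural observations already established: (a) $\# \calF_\eps = \dim M^\eps(\calO)$, and (b) $\phi \in M^\eps(\calO)$ is zero on an ideal class $x \in \Cl(\calO)$ precisely when it is zero on the $\mathrm{Pic}(\calO)$-orbit representative of $x$ in $\calF_\eps$, since values on an orbit differ only by a sign determined by $\eps$.

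First I would note that the $\mathrm{Pic}(\calO)$-action partitions $\Cl(\calO)$ into orbits of size at most $|\mathrm{Pic}(\calO)| = 2^{\omega(N)}$, and only the $\eps$-admissible orbits can support nonzero values of $\phi$ (the rest are trivial zeroes, contributing to at most $2^{\omega(N)} (\dim M^{+_N}(\calO) - \dim M^\eps(\calO))$ zeroes, but we only need the cruder orbit-size bound). Thus for any $\phi \in M^\eps(\calO)$, the total number of zeroes in $\Cl(\calO)$ is bounded by $2^{\omega(N)}$ times the number of orbit representatives $x \in \calF_\eps$ at which $\phi$ vanishes.

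Next I would apply Proposition \ref{prop42}, which gives that the number of $x \in \calF_\eps$ with $\phi(x) = 0$ is at most $\# \calF_\eps - d = \dim M^\eps(\calO) - d$. Multiplying by the orbit-size bound $2^{\omega(N)}$ yields the claim.

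There is no real obstacle here; the content is entirely in Proposition \ref{prop42}, and the corollary is just the translation from a count on the fundamental domain $\calF_\eps$ to a count on all of $\Cl(\calO)$ via the trivial bound on orbit sizes. The only thing to be careful about is using the correct orbit-size bound: since each orbit is an orbit of $\mathrm{Pic}(\calO) \simeq (\Z/2\Z)^{\omega(N)}$, its size is at most $2^{\omega(N)}$, and this factor is exactly what appears in the corollary.
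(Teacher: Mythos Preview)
Your proposal is correct and follows essentially the same one-line argument as the paper: apply Proposition~\ref{prop42} on the fundamental domain $\calF_\eps$ and then use that each $\mathrm{Pic}(\calO)$-orbit has size at most $2^{\omega(N)}$. (As your parenthetical already hints, the bound really counts zeroes lying in $\eps$-admissible orbits---i.e., nontrivial zeroes---since $\calF_\eps$ does not see inadmissible orbits; the paper reads ``zeroes'' this way as well, consistent with the section heading.)
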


\subsection{Conditional results}

Note that \cref{gal-conj} for $k=2$ and fixed $r$ implies that, as $N$
ranges over $\Sq_r$, 100\% of Atkin--Lehner eigenspaces $S_2^{\new, \eps}(N)$ 
are spanned by a single Galois orbit, and thus 100\% of
newforms lie in such Atkin--Lehner eigenspaces.
Consequently, the following implies \cref{thm14}.

\begin{thm} \label{thm:nozeroes} Let $\eps$ be a sign pattern for $N$.
Suppose the Atkin--Lehner eigenspace $S_2^{\new, -\eps}(N)$
is spanned by a single Galois orbit.  Then no eigenforms in $M^\eps(\calO)$ have
nontrivial zeroes.
\end{thm}

\begin{proof}
If $\eps \ne +_N$, then it is immediate from \cref{cor43} that any of the Galois conjugate
eigenforms in $M^\eps(\calO) = S^\eps(\calO)$ have no trivial zeroes.
Also, $\one$ clearly has no trivial zeros.

So assume $\eps = +_N$, $\dim S^{+_N}(\calO) > 0$ and consider the eigenforms 
$\phi_1, \dots, \phi_d \in S^{+_N}(\calO)$,
which are all Galois conjugate by assumption.
By the same reasoning as in the proof of \cref{prop42}, each $\phi_i$
must be zero-free if there exists some zero-free (not necessarily eigen)
form $\phi \in S^{+_N}(\calO)$.  

But the existence of such a $\phi$ is easy to see.
Recall $X_1, \dots, X_t$ are the orbits of $\Cl(\calO)$ under $\mathrm{Pic}(\calO)$.
These are all $+_N$-admissible, and because $\dim S^{+_N}(\calO) > 0$,
i.e., $\dim M^{+_N}(\calO) > 1$, we have $t \ge 2$.
Note $\phi \in S^{+_N}(\calO)$
if and only if $\phi$ is constant on each $X_j$ and
$\sum_{i=1}^h e_i^{-1} \phi(x_i) = 0$.  Put $e_{X_j} = e_i$
for any $x_i \in X_j$---this does not depend on the choice of
$x_i$---see \cite[Lemma 4.1]{me:cong2}.
Then we may construct a
zero-free $\phi \in S^{+_N}(\calO)$ by choosing nonzero $a_j$'s
such that $\sum_{j=1}^t e_{X_j}^{-1} a_j = 0$
and defining $\phi = \sum a_j 1_{X_j}$.  
\end{proof}


\section{Data and conjectures}
\label{sec:data}


In this section, we present some data and conjectures on zeroes
of quaternionic modular forms.  Our data is based on calculations
of quaternionic modular forms in both Sage \cite{sage} 
and Magma \cite{magma}.\footnote{Some of our code is available at \url{https://math.ou.edu/~kmartin/data/}}

\subsection{Computations}
We calculate a basis of eigenforms $\phi$ for $M(\calO)$ as follows.
We use built-in functions to compute Brandt matrices $T_p$ acting on 
$M(\calO)$ for prime $N$ with Sage or general $N \in \Sq_\odd$ with Magma.
Given $N$, we compute $T_p$ for $p \nmid N$ for all small $p$ until we find
a $T_p$ with no repeated eigenvalues.  (In all cases, we find such a $p$
and it is usually the smallest $p \nmid N$.)   
Then the eigenvectors of $T_p$ give a basis of eigenforms for $M(\calO)$ as
in \cref{ex:37},
and we may scale them to make all entries algebraic integers.
We compute exact eigenvectors of $T_p$ whenever $N < 4000$ ($N \in \Sq_\odd$)
or when the degree $d$ of the (minimal polynomial of the) eigenvalue is small and 
$N < 20000$ is prime. 

Given the exact eigenvectors, we can determine the exact number and position of
zeroes of our eigenforms.  Then for prime level $N$, we determine which zeroes are
trivial zeroes as follows.  For small $N$, one can compute $T_N$ and determine the sign of an eigenform $\phi$ and identify any trivial zeroes, but the calculation of $T_N$
is slow for $N$ large.  (We expect that one should be able to implement a faster algorithm 
using $\sigma_N$, but we have not attempted this.)
Instead, suppose we have computed a basis of eigenforms for $S(\calO)$.  
Recall for prime $N$, the trivial zeroes occur exactly at the fixed 
points of $\sigma_N$ for $\phi \in S^{-_N}(\calO)$.  We know the number $r$ of
fixed points of $\sigma_N$ from \cref{prop:nfixedpoints}.
Then we find the $x_i$'s in $\Cl(\calO)$ such that at least $\dim S^{-_N}(\calO)$ eigenforms
are zero at $x_i$.  In our calculations, there are always exactly $r$ such $x_i$'s, 
so these $x_i$'s must be the fixed points of $\sigma_N$, and hence the sign
(i.e., eigenvalue of $T_N$) for an eigenform $\phi$ is $+1$ if $\phi$ is nonzero at
one of these $x_i$'s and $-1$ otherwise.

When $d$ and $N$ are both large, the implementations of exact eigenvector calculations 
are slow in Magma and very slow in Sage.   For example, consider $T_2$ with $N=1009$, which is represented by an $84 \times 84$ matrix.  Its characteristic polynomial has irreducible factors of degrees 1, 37 and 46.  Here exact
calculations of eigenspaces took about 57 minutes of CPU time in Sage,
and about 2 minutes of CPU time in Magma.
Instead, we use  {\tt numpy} to compute approximate eigenvectors for prime $N < 20000$, and thus compute numerical zeroes of eigenforms.  (For $N=1009$,
 computing numerical eigenvectors 
with {\tt numpy} only required about 0.09 seconds of CPU time.)

The issue, of course, with numerical eigenvector methods is that one needs to worry about
how close the set of numerical zeroes matches the set of 
actual zeroes.
While we can check that these counts do not always match exactly, they do appear to be
very close.  More precisely, for prime $N < 20000$, we used a numerical version of the algorithm described above
to numerically compute the fixed points of eigenforms, and thus numerically compute
the root number of such eigenforms.  Because this method always resulted in the
correct number of fixed points, we believe that our numerical methods are properly
detecting every trivial zero.  In addition, comparing the numerical calculations with
exact calculations for both small degree forms with prime $N < 20000$ 
and all forms with prime $N < 4000$ suggests that the number of numerical nontrivial
zeroes we find is quite also close to the actual number of nontrivial zeroes.
(Most numerical zeroes are accounted for by trivial zeroes or by nontrivial zeroes
of small degree forms.)  As further evidence that we are not picking up too many
 numerical nontrivial zeros which are not actual zeroes, consider 
the 1563 prime levels $N < 20000$ such that $S(\calO)$ has exactly 2 Galois orbits.
The eigenforms have no nontrivial zeroes in these levels by \cref{thm:nozeroes},
and only 20 of these levels possess numerical nontrivial zeroes, and in each of these
20 levels there are exactly 2 spurious numerical nontrivial zeroes.
(The smallest such level is 7351, and the rest are over 10000.)

We remark that in \cite{me:maeda}, we in fact computed a $T_p$ with
no repeated eigenvalues for each prime $N < 60000$.  In principle we could use this
to extend our calculations of numerical zeroes for $N > 20000$, 
but the issue is that we do not have a good, practical way to either improve or
estimate the accuracy of this numerical approach for a much larger range of $N$.  
The problem is that numerical
methods are not good at distinguishing eigenspaces with close eigenvalues.
In large levels it happens the numerical approximations for some nonzero entries 
are smaller (in absolute value) than the numerical approximations for entries which 
are actually zeroes, so one cannot simply adjust a numerical threshold to only detect 
true zeroes.  Moreover, there is no simple way
to increase precision in the numerical eigenvector methods in {\tt numpy}, 
and in any case increasing the precision would significantly slow down the
numerical methods.

For squarefree levels $N$, it is more complicated to determine which zeroes are trivial.
Again, one could compute $T_p$ for each $p | N$ and use this to construct the signed
graphs $\Sigma^\eps$, which is feasible when all prime factors of $N$ are small.
On the other hand, we have criteria to determine  which sign patterns $\eps$ have
trivial zeroes, and can compute the dimension of each $M^\eps(\calO)$ from the
formulas for dimensions of Atkin--Lehner spaces in \cite{me:dim}.  Thus
we can count how many forms have no trivial zeroes.  We use this to determine exactly
how many forms in $S(\calO)$ with no trivial zeroes are zerofree for squarefree levels
$N < 4000$.

\subsection{Counting zeroes} \label{sec52}
For $N \in \Sq_\odd$, fix a maximal quaternionic order $\calO=\calO_N$ of level
$N$ in $B_N$.
Let $S$ be a Hecke-invariant subspace of $M(\calO_N)$.
By the number of (trivial/nontrivial) zeroes for $S$ we mean
the total number of (trivial/nontrivial) zeroes as we run over a basis of eigenforms
for $S$.

\begin{prop} As $N \to \infty$ in $N \in \Sq_\odd$, the number of trivial zeroes for
$S(\calO_N)$ is $O(N^{3/2+\epsilon})$ for any $\epsilon > 0$.  This is essentially
optimal in the sense that this number is not $O(N^{3/2-\epsilon})$ for any
$\epsilon > 0$ when $N$ ranges over all of $\Sq_\odd$.
\end{prop}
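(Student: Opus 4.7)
The plan is to encode the total trivial-zero count via the sign-pattern decomposition. Write
\[
\#\{\text{trivial zeroes across a basis of eigenforms of } S(\calO_N)\} = \sum_{\eps} T(\eps) \cdot \dim S^\eps(\calO_N),
\]
where $T(\eps)$ denotes the number of $x \in \Cl(\calO_N)$ lying in an $\eps$-inadmissible $\mathrm{Pic}(\calO_N)$-orbit; this is precisely the common trivial-zero locus of every eigenform in $S^\eps(\calO_N)$, so the identity is bookkeeping once one uses $M(\calO_N) = \bigoplus_\eps M^\eps(\calO_N)$.

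For the upper bound, I would crudely factor out the maximum over sign patterns, giving
\[
\sum_\eps T(\eps) \dim S^\eps(\calO_N) \le \Bigl(\max_\eps T(\eps)\Bigr) \cdot \sum_\eps \dim S^\eps(\calO_N) = \Bigl(\max_\eps T(\eps)\Bigr) \cdot \dim S(\calO_N).
\]
Proposition 3.9(ii) yields $\max_\eps T(\eps) = O(N^{1/2+\epsilon})$, and $\dim S(\calO_N) \le h - 1 \ll N$ (from the Eichler mass formula, $h \sim \phi(N)/12$). Multiplying gives the required $O(N^{3/2+\epsilon})$.

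For essential optimality, I would specialize to $N$ prime, where $-_N$ is the only nontrivial sign pattern. Proposition 3.11 gives $T(-_N) = \tfrac12 h(\Delta_N) b(N,1)$, and Siegel's theorem (the same input used in Corollary 3.13) yields $h(\Delta_N) \gg_\epsilon N^{1/2-\epsilon}$. On the other hand, since $\one \in M^{+_N}(\calO_N)$ when $N$ is prime, the dimension identities give $\dim S^{-_N}(\calO_N) = \dim M^{-_N}(\calO_N) = \tfrac12 (h - T(-_N))$, which is $\gg N$ for $N$ large because $h \sim (N-1)/12$ dominates the $O(N^{1/2+\epsilon})$ correction. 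The product $T(-_N) \cdot \dim S^{-_N}(\calO_N)$ is then $\gg_\epsilon N^{3/2-\epsilon}$, which rules out any $O(N^{3/2-\epsilon})$ bound.

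There is no substantive obstacle here: the upper bound is just Proposition 3.9(ii) combined with the trivial bound $h \ll N$, and optimality is read off from the prime-level formula of Proposition 3.11 together with Siegel. The only care needed is to verify that the subtraction $h - T(-_N)$ in the dimension of $S^{-_N}$ does not spoil the lower bound on $\dim S^{-_N}(\calO_N)$, which is immediate from $T(-_N) = O(N^{1/2+\epsilon}) = o(h)$.
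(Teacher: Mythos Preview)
Your proposal is correct and follows essentially the same route as the paper: the upper bound comes from \cref{triv0-bound} (your Proposition~3.9(ii)) combined with the trivial bound $\dim S(\calO_N) \ll N$, and optimality is read off from the prime-level case via \cref{prop:nfixedpoints} and Siegel (the paper packages the latter as \cref{triv0-prime-lb}). The only cosmetic difference is that the paper invokes the asymptotic $\dim M^\eps(\calO_N) \sim \phi(N)/(12\cdot 2^{\omega(N)})$ in place of your explicit computation $\dim S^{-_N}(\calO_N) = \tfrac12(h - T(-_N))$, but both yield $\dim S^{-_N}(\calO_N) \gg N$ for prime $N$.
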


\begin{proof} 
By \cref{triv0-bound}, the maximum number of trivial zeroes of an eigenform
$\phi \in S(\calO_N)$ grows like $O(N^{1/2+\epsilon})$.  Since $\dim S(\calO_N)
\sim \frac{\phi(N)}{12} = O(N)$, using this upper bound on every eigenform (up to scaling) 
in $S(\calO_N)$ gives the first statement.

We know that the number of trivial zeroes cannot be $O(N^{3/2-\epsilon})$ for any
$\epsilon > 0$ because \cref{triv0-prime-lb} tells us it is not when we restrict to
the subsequence of $N \in \Sq_1$.
\end{proof}

For comparison, the total number of values (with multiplicity)
of a basis of eigenforms for $S(\calO_N)$ is $O(N^2)$.
Precisely, there are $h-1$ linearly independent eigenforms in $S(\calO_N)$,
each of which have $h$ (not necessarily distinct) values $\phi(x_1), \dots,
\phi(x_h)$, giving  $h^2-h$ total values 
where $h = 1 + \dim S_2^\new(N) \sim \frac{\phi(N)}{12}$.

We expect that most zeroes are trivial zeroes.  Quantitatively,
we predict the following.

\begin{conj} \label{conj:nnontriv}
As $N \to \infty$ along $\Sq_r$ for some $r$ or along $\Sq_\odd$,
the number of nontrivial zeroes for
$S(\calO_N)$ is $O(N^{1+\epsilon})$ for any $\epsilon > 0$.
\end{conj}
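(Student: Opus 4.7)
The plan is to aggregate the per-eigenform bound of \cref{cor43} across Galois orbits and Atkin--Lehner sign patterns. Organize a basis of eigenforms for $S(\calO_N)$ by sign pattern $\eps$ and then by Galois orbit: write $D_\eps := \dim M^\eps(\calO_N)$ and let $d_{\eps,1}, \dots, d_{\eps,m_\eps}$ denote the sizes of the Galois orbits inside $M^\eps(\calO_N)$, so that $\sum_i d_{\eps,i} = D_\eps$ (with the single Eisenstein orbit for $\eps = +_N$ contributing a negligible correction). Eigenforms in the same Galois orbit share the same zero set, and by \cref{cor43} any eigenform in the $i$-th orbit has at most $2^{\omega(N)}(D_\eps - d_{\eps,i})$ nontrivial zeros. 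Summing gives
\[
Z(N) \;\le\; 2^{\omega(N)} \sum_{\eps} \sum_{i=1}^{m_\eps} d_{\eps,i}\,(D_\eps - d_{\eps,i}) \;=\; 2^{\omega(N)} \sum_{\eps} \Bigl( D_\eps^2 - \sum_i d_{\eps,i}^2 \Bigr),
\]
where $Z(N)$ is the total number of nontrivial zeros counted across the basis.

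Next I would use the identity $D_\eps^2 - \sum_i d_{\eps,i}^2 = 2\sum_{i<j} d_{\eps,i} d_{\eps,j}$, which vanishes precisely when $M^\eps(\calO_N)$ is a single Galois orbit. Let $d^*_\eps$ be the largest orbit size and $k_\eps := D_\eps - d^*_\eps$ be the total dimension of the remaining orbits; then $D_\eps^2 - \sum_i d_{\eps,i}^2 \le 2 D_\eps k_\eps$. Since $D_\eps = O(N/2^{\omega(N)})$ and $2^{\omega(N)} = O(N^\epsilon)$ for squarefree $N$, the conjectured bound would follow from the refined orbit-concentration estimate
\[
\sum_\eps k_\eps \;=\; O(N^\epsilon) \qquad \text{as } N \to \infty.
\]

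The main obstacle is establishing this estimate. The quantity $\sum_\eps m_\eps$ is exactly the number of Galois orbits in $M_2^\new(N)^*$, so \cref{gal-conj} predicts $\sum_\eps m_\eps \sim 2^{\omega(N)}$ on average, i.e., the expected $m_\eps$ is close to $1$ per sign pattern. However, $\sum_\eps k_\eps$ measures something strictly stronger: not merely the number of orbits, but the total dimension of non-dominant orbits in each Atkin--Lehner eigenspace. A natural strong refinement of \cref{gal-conj}, in the spirit of the strong form of Maeda's conjecture, would assert that for each $N$ and each sign pattern $\eps$ the space $M^\eps(\calO_N)$ decomposes as one generic Galois orbit of size $D_\eps - O(1)$ together with at most $O(1)$ small exceptional orbits. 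Such a statement would immediately give $\sum_\eps k_\eps = O(2^{\omega(N)}) = O(N^\epsilon)$ and hence \cref{conj:nnontriv}. Proving this level-by-level concentration result seems considerably harder than \cref{gal-conj} itself, as it must rule out pathological levels where $S_2^\new(N)$ splits into many small Galois orbits within a single Atkin--Lehner eigenspace; I expect this to be the essential difficulty.
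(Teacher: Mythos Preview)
Note first that \cref{conj:nnontriv} is a \emph{conjecture}: the paper does not prove it, and offers only heuristic and numerical evidence in \cref{sec52}. So your proposal should be read not as a proof but as a conditional reduction, and you have correctly flagged this yourself by isolating the unproven orbit-concentration hypothesis $\sum_\eps k_\eps = O(N^\epsilon)$ as the essential difficulty.

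Your reduction is sound and is genuinely different from the paper's heuristic. The paper argues probabilistically: modelling the values $\phi(x_i)$ of a degree-$d$ eigenform as samples from a near-normal distribution on the lattice $\frako_{K_\phi} \subset \R^d$, it expects the probability of a nontrivial zero to decay with $d$, so that almost all nontrivial zeros should come from forms of bounded degree; combined with the expectation that the number of small-degree forms in level $N$ is $O(N^\epsilon)$, this yields $O(N^{1+\epsilon})$. Your route instead uses \cref{cor43} to get a deterministic per-orbit bound, then sums to obtain $Z(N) \ll N \sum_\eps k_\eps$. This has the advantage of being a clean, rigorous inequality requiring no random model; the price is that the hypothesis $\sum_\eps k_\eps = O(N^\epsilon)$ is strictly stronger than what the paper invokes. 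In particular, it is not implied by \cref{gal-conj} even level-by-level (two orbits of size $D_\eps/2$ give $m_\eps = 2$ but $k_\eps = D_\eps/2$), and it rules out configurations that the paper's heuristic would still tolerate. So your argument trades a probabilistic step for a structural hypothesis on Galois orbits that is closer in spirit to a strong Maeda-type statement than to the paper's random-value model.
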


This together with \cref{thm14} suggests \cref{conj11}(i).

Before we present some heuristics for this, we present some data.
In \cref{ntriv-max}, we plot the
total number of numerical nontrivial zeroes of $S(\calO_N)$, for prime levels $N < 20000$.  
In \cref{ntriv-graph} we also graph the total
number of numerical nontrivial zeroes for all prime levels $\le X$ when
$X < 20000$.  
These graphs indeed suggest that the number of nontrivial
zeroes for $S(\calO_N)$ grows not much faster than linearly in $N$, and that
the same is true for the total number of nontrivial zeroes of all prime levels up to $X$.  
The latter quantity should not grow too much faster than the former quantity
as we expect 100\% of prime levels should have no nontrivial zeroes from 
\cref{thm14}.
We note that out of the 2262 prime levels $N < 20000$, there are numerical nontrivial
zeroes in 682 of these levels.

\begin{figure}
\begin{minipage}{.5\textwidth}
\captionof{figure}{Number of nontrivial zeroes in each prime level $N$}
\resizebox{.9\linewidth}{!}{
\begin{tikzpicture}
    \begin{axis}[xlabel=$N$, scaled x ticks = false, scaled y ticks = false]
    \addplot[only marks, mark size = 0.7] table {qmf-zeroes-tots.dat};
    \end{axis}
\end{tikzpicture}
}
 \label{ntriv-max}
\end{minipage}%
%
\begin{minipage}{.5\textwidth}
\captionof{figure}{Number of nontrivial zeroes for prime levels $\le X$}
\resizebox{.9\linewidth}{!}{
\begin{tikzpicture}
    \begin{axis}[xlabel=$X$, scaled x ticks = false, scaled y ticks = false]
    \addplot[]
        plot coordinates {      
  (3, 0)
(37, 0)
(79, 2)
(131, 7)
(181, 33)
(239, 81)
(293, 115)
(359, 213)
(421, 274)
(479, 435)
(557, 602)
(613, 810)
(673, 925)
(743, 1163)
(821, 1325)
(881, 1413)
(953, 1507)
(1021, 1741)
(1091, 1963)
(1163, 2083)
(1231, 2317)
(1301, 2604)
(1399, 3046)
(1459, 3330)
(1531, 3567)
(1601, 3823)
(1667, 4317)
(1747, 4574)
(1831, 4828)
(1907, 5295)
(1997, 5879)
(2069, 6247)
(2137, 6723)
(2237, 7321)
(2297, 7887)
(2377, 8734)
(2441, 9085)
(2543, 9151)
(2633, 10015)
(2693, 10148)
(2753, 10611)
(2837, 11031)
(2917, 11461)
(3011, 11828)
(3089, 12462)
(3191, 12844)
(3271, 13922)
(3347, 14531)
(3449, 14932)
(3527, 15547)
(3583, 16191)
(3671, 16508)
(3739, 16692)
(3833, 17202)
(3917, 17882)
(4003, 18879)
(4079, 19457)
(4157, 19873)
(4243, 20397)
(4337, 20812)
(4423, 21489)
(4513, 23151)
(4597, 23323)
(4673, 23469)
(4783, 23727)
(4871, 24376)
(4951, 24718)
(5011, 24907)
(5101, 25345)
(5197, 25841)
(5297, 26351)
(5399, 27904)
(5471, 29701)
(5531, 30283)
(5647, 30525)
(5711, 30936)
(5807, 31917)
(5867, 32608)
(5981, 32796)
(6073, 35108)
(6151, 35270)
(6247, 36404)
(6317, 36884)
(6379, 37678)
(6491, 38087)
(6581, 39286)
(6689, 39720)
(6779, 40376)
(6857, 40740)
(6949, 41477)
(7013, 42093)
(7121, 44403)
(7213, 44889)
(7309, 45609)
(7433, 45923)
(7517, 46370)
(7577, 46637)
(7669, 47488)
(7741, 48605)
(7853, 49533)
(7933, 50899)
(8053, 50899)
(8123, 51693)
(8231, 52337)
(8297, 52957)
(8419, 54867)
(8521, 55862)
(8609, 57175)
(8689, 57942)
(8753, 59765)
(8839, 60593)
(8941, 61535)
(9029, 63349)
(9133, 64608)
(9209, 65776)
(9311, 66909)
(9397, 67471)
(9463, 67847)
(9547, 69105)
(9649, 69709)
(9743, 70133)
(9829, 72477)
(9907, 73977)
(10037, 75591)
(10111, 77847)
(10193, 78159)
(10289, 78971)
(10369, 81650)
(10477, 82328)
(10597, 85480)
(10667, 85827)
(10771, 87544)
(10867, 88780)
(10973, 89687)
(11071, 92195)
(11161, 92573)
(11261, 94043)
(11353, 95752)
(11467, 96534)
(11551, 97164)
(11681, 97164)
(11783, 98012)
(11863, 98312)
(11941, 100761)
(12041, 102242)
(12119, 104222)
(12239, 107036)
(12323, 107828)
(12413, 108252)
(12497, 108254)
(12577, 110627)
(12653, 112174)
(12757, 113440)
(12853, 114280)
(12953, 114622)
(13033, 114622)
(13127, 117023)
(13219, 117023)
(13327, 117353)
(13421, 117353)
(13523, 118719)
(13633, 119729)
(13711, 121128)
(13799, 122946)
(13901, 123200)
(13999, 123684)
(14087, 124300)
(14221, 126537)
(14341, 127963)
(14431, 130021)
(14537, 130665)
(14627, 131474)
(14717, 131476)
(14779, 131740)
(14869, 133159)
(14957, 134616)
(15083, 136016)
(15173, 138347)
(15269, 140809)
(15331, 142656)
(15427, 144604)
(15527, 144604)
(15629, 145581)
(15727, 149616)
(15791, 151220)
(15889, 153244)
(15991, 154839)
(16087, 156709)
(16189, 157247)
(16301, 157699)
(16417, 160065)
(16493, 160454)
(16619, 160456)
(16699, 161212)
(16829, 162871)
(16927, 166073)
(17021, 168329)
(17107, 169812)
(17207, 170292)
(17327, 172787)
(17401, 175913)
(17489, 179117)
(17581, 180700)
(17683, 182399)
(17791, 185301)
(17909, 187015)
(17981, 188039)
(18077, 190500)
(18169, 193178)
(18253, 194436)
(18341, 196013)
(18439, 198232)
(18523, 199536)
(18671, 200230)
(18773, 200678)
(18913, 202180)
(19031, 202184)
(19141, 203679)
(19237, 204237)
(19373, 204609)
(19429, 207109)
(19489, 208279)
(19583, 209315)
(19717, 209317)
(19813, 210162)
(19919, 210840)
(19997, 212516)      
        };
\end{axis}
\end{tikzpicture}
}
 \label{ntriv-graph}
 \end{minipage}
\end{figure}

In the rest of this section, we explain our main reasons for believing
in \cref{conj:nnontriv}, which are interwined with \cref{conj12}.

Say $\phi \in S(\calO)$ has rationality field $K=K_\phi$ of degree $d$.
Normalize $\phi$ so that $\phi$ is integral and primitive, i.e.,
$\phi(x_1), \dots \phi(x_h)$ all lie in $\frako_K$ and have no common
factors besides units.  Since $K$ is totally real, we have
$d$ real embeddings, $\sigma_1, \dots, \sigma_d: K \to \R$.  
This allows us to realize $\frako_K$ as a lattice $\Gamma \subset \R^d$ via
$\alpha \mapsto (\sigma_1(\alpha), \dots, \sigma_d(\alpha))$.
Similarly, let $\Lambda = \Gamma^h \subset \R^{dh}$ be the lattice obtained by 
a component-wise embedding.  
Then we may view $\phi \in \Lambda$
as the lattice point corresponding to $(\phi(x_1), \dots \phi(x_h)) \in \frako_K^h$.

For convenience, we will take the metric on $\R^{dh}$ to be the product of
$\frac 1{\sqrt{e_i}}$ times the standard metric on $\R^d$ over $1 \le i \le h$.  Let
$\lambda = \prod e_i^{-1/2}$.  Then $\Lambda \subset \R^{dh}$ is a lattice with volume 
$\lambda \Delta_K^{h/2}$ where $\Delta_K$ is the discriminant of $K$.
Then the square length of $\phi \in \Lambda$ is
\[ \langle \phi, \phi \rangle_\Lambda := \sum_{\sigma_j} \sum_{i=1}^h \frac 1{e_i} 
\sigma_j(\phi(x_i))^2 = \tr_{K/\Q} (\phi, \phi). \]
We would like to model $\phi$ as a random lattice point in $\Lambda$ on the sphere of
radius $\langle \phi, \phi \rangle_\Lambda^{1/2}$.

For a hypersphere $S^{n-1}$ in $\R^{n}$, and coordinate functions $X_1, \dots, X_d$ 
of random point on $S^{n-1}$, a result of Poincar\'e tells us
that the distribution of $(X_1, \dots, X_d)$ tends to a $d$-dimensional normal distribution
as $n \to \infty$---in fact $d$ is also allowed to vary,
provided that $d$ grows slower than $n$, i.e., $d=o(n)$ \cite{DF}.
Consequently, viewing $\phi \in \Lambda$ as a randomly chosen lattice point on some hypersphere of radius $R$ in $\R^{dh}$, a given value 
$\phi(x_i) \in \frako_K \subset \R^d$  should be distributed approximately like a 
$d$-dimensional normal distribution when $N$ is large. Note that $d \le h$ so $d=o(n)$ where $n=dh$.  (See \cref{fig:deg1vals,fig:deg2vals} below for 
two numerical examples when $d=1, 2$.) 
As long as the radius $R$ is not too small with respect to $\Delta_K$, the probability that 
$\phi(x_i) = 0$ will decrease as $d$ increases, essentially because the number of lattice
points $\frako_K$ in the ball of radius $R$ is increasing.
While we do not have precise heuristics about 
$\langle \phi, \phi \rangle_\Lambda = R^2$, but numerically it appears
to increase suitably rapidly with $d$ and $\Delta_K$.

This idea, together with data and \cref{cor43}, is what leads us to believe in \cref{conj12},
i.e., that forms with smaller degree are more likely to have zeroes.  
In \cref{tab:deg-hist} we tabulate the nontrivial exact zeroes by degree
among prime levels $N < 4000$, which supports this conjecture.  
The first row of data tells us how many Galois orbits of eigenforms of degree $d$
there are in this range, and the second row counts how many of these orbits 
have nontrivial zeroes.
The third row counts the total number of nontrivial zeroes coming from degree $d$ forms
in this range, the fourth row gives the proportion of all zeroes which are nontrivial zeroes,
and the final row gives the proportion all values (with multiplicity) which are nontrivial
zeroes.
(In the third row, we count each eigenform in a Galois orbit separately; in the last
two rows the proportion is the same looking at individual eigenforms or looking at
Galois orbits.)  While this range $N < 4000$ is rather limited,
the key point of this table is that it indicates that the frequency of nontrivial zeroes
tends to decrease in $d$.

\begin{table}
\caption{Degree histograms about nontrivial zeroes for prime levels $N < 4000$}
\begin{tabular}{r|rrrrrrrrrr}
$d$ & 1  &  2  &  3  &  4  &  5  &  6  &  7  &  8  &  9  & $\ge 10$ \\
\hline
tot \#orbits & 179 & 133 & 57 & 25 & 19 & 10 & 18 & 3 & 12 & 983 \\
\#orbits w/nontriv 0's & 152  &  110  &  35  &  14  &  5  &  2  &  2  &  0  &  2 & 0  \\
\#nontriv 0's & 9730 & 5896 & 1860 & 864 & 210 & 120 & 28 & 0 & 27 & 0 \\
\#nontriv 0's/all 0's & 0.873 & 0.713 & 0.543 & 0.451 & 0.214 & 0.175&  0.035& 0 & 0.032 & 0 \\
\#nontriv 0's/all vals & 0.396  & 0.177  &  0.123  &  0.101  &  0.046  &  0.042  &  0.007  &  0  &  0.006  &  0   
\end{tabular} \label{tab:deg-hist}
\end{table}

Moreover, from \cite{me:maeda}, we have the expectation that 100\% of the time
that an eigenspace $S^\eps(\calO_N)$ has multiple Galois orbits it is because of the
existence of degree 1 forms.  In summary, most of the small Galois orbits
consist of degree 1 forms, and degree 1 forms are more likely than higher degree forms to
have zeroes.  This leads to the following more precise conjecture.

\begin{conj} \label{conj:deg1}
As $N \to \infty$ along $\Sq_r$ for some odd $r$, or along $\Sq_\odd$,
$100\%$ of nontrivial zeroes of $S(\calO_N)$ come from degree $1$ forms. 
\end{conj}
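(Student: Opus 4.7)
The plan is to reduce the conjecture to a quantitative comparison between the zero contributions of degree $1$ and degree $\geq 2$ Galois orbits, combining \cref{cor43} with the Galois orbit structure expected in \cite{me:maeda}. First I would decompose $S(\calO_N) = \bigoplus_\eps S^\eps(\calO_N)$ and appeal to \cref{thm:nozeroes} to conclude that nontrivial zeroes can only arise within sign-pattern eigenspaces containing at least two Galois orbits. Within such an eigenspace, list orbit degrees $d_1 \geq d_2 \geq \cdots \geq d_k$ with $\sum d_i = \dim S^\eps(\calO_N)$. The refined Maeda-type heuristic in \cite{me:maeda} predicts that, with asymptotic probability one, such multi-orbit eigenspaces have $d_1 \approx \dim S^\eps$ and $d_2 = \cdots = d_k = 1$, so the only ``extra'' orbits are degree $1$. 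It then suffices to show that the dominant orbit (of degree $d_1$) contributes a vanishing fraction of the total nontrivial zero count.

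By \cref{cor43}, the dominant orbit's total zero count is bounded by $d_1 \cdot 2^{\omega(N)}(\dim M^\eps - d_1) = O(2^{\omega(N)} \dim M^\eps \cdot (k-1))$. When the number $k-1$ of extra orbits is bounded (as expected from \cite{me:maeda}), this is on the order of $2^{\omega(N)} N$. Unfortunately, the degree $1$ forms from the small orbits admit the same upper bound from \cref{cor43}, so that corollary alone cannot separate the two contributions. To close the gap one must refine \cref{cor43} for $d \geq 2$ using the lattice heuristic of \cref{sec52}: for a primitive degree $d$ eigenform $\phi$, the condition $\phi(x_i) = 0$ is essentially a codimension-$d$ event in the lattice $\Gamma \subset \R^d$ carrying the values, so one expects a degree $d \geq 2$ form to have $O(N^{1-\delta(d)})$ nontrivial zeroes with $\delta(d) > 0$ increasing in $d$. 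This picture is consistent with the data in \cref{tab:deg-hist}.

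The main obstacle is rigorizing this codimension heuristic. One would need lower bounds on $\langle \phi, \phi \rangle_\Lambda$ that grow suitably with both $N$ and $\Delta_{K_\phi}$, together with a genuine equidistribution statement for the values $\phi(x_i) \in \calO_{K_\phi}$ viewed in $\R^d$ via the real embeddings. Current sup-norm bounds such as those of \cite{blomer-michel}, and the known value-distribution results for Hecke eigenforms, are far from sufficient. A rigorous attack would likely require new arithmetic input, for instance a mod-$\frakp$ nonvanishing result for degree $\geq 2$ eigenforms in the spirit of \cite{emerton}, or an effective joint equidistribution theorem for the real embeddings of Galois-conjugate form values; both appear substantially beyond current techniques, which is why we state this only as a conjecture.
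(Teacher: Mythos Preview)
This statement is a \emph{conjecture}, not a theorem, and the paper offers no proof---only heuristic motivation and numerical evidence (the lattice model for values in \cref{sec52}, the expectation from \cite{me:maeda} that extra Galois orbits are almost always degree $1$, and the data in \cref{tab:deg-hist} and \cref{d1rat-graph}). Your proposal correctly recognizes this: you sketch a plausible line of attack, identify precisely where it breaks down (the codimension heuristic for degree $\ge 2$ values cannot currently be made rigorous), and conclude that the statement must remain conjectural. That is an accurate assessment and matches the paper's own reasoning.

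One minor correction: your invocation of \cref{thm:nozeroes} is slightly off. That theorem assumes each Atkin--Lehner eigenspace has at most one Galois orbit and concludes there are \emph{no} nontrivial zeroes at all; it does not by itself localize nontrivial zeroes to multi-orbit eigenspaces in general. What you actually need (and what the paper's heuristic uses) is the softer observation, implicit in \cref{prop42} and \cref{cor43}, that a degree-$d$ orbit filling an entire eigenspace of dimension $d$ has no nontrivial zeroes---so nontrivial zeroes can only come from eigenspaces with at least two orbits. The conclusion is the same, but the justification should cite \cref{cor43} rather than \cref{thm:nozeroes}.
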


\begin{rem} \label{rem:deg2}
We similarly expect that if one restricts to forms of degree
$\ge 2$, that 100\% of non-trivial zeroes come from degree 2 forms.
More generally, we expect a similar sort of phenomenon that degree $d$ forms
tend to contribute most of the zeroes among degree $\ge d$ forms.
However, we are hesitant to formulate a precise conjecture about this in general, 
as it is not even clear if for fixed $d$ there should be infinitely
many forms of degree $d$ with prime level.  See \cite{me:maeda} for
a brief discussion of this latter issue.
\end{rem}

In \cref{d1rat-graph}, we present some data on the proportion of numerical
nontrivial zeroes of prime levels $\le X$ which are accounted for by the (actual) nontrivial
zeroes of degree 1 forms.  Note that this proportion appears to be slowly increasing.
While it is not clear from the graph alone whether this proportion is tending to 1,
we expect that the slow rate of increase is due to the existence of many small degree
forms in small levels (cf.\ \cite{me:maeda}).  In addition, we believe the numerical
nontrivial zeroes may be a slight overcount of the actual nontrivial zeroes for $N$ large.

\begin{figure}
\begin{minipage}{.5\textwidth}
\captionof{figure}{Proportion of nontrivial zeroes for prime levels $\le X$ coming from degree 1 forms}
\resizebox{.9\linewidth}{!}{
\begin{tikzpicture}
    \begin{axis}[xlabel=$X$, ymin=0, ymax=1.1, scaled x ticks = false, scaled y ticks = false]]
    \addplot[]
        plot coordinates {     
(73, 1.0)
(83, 1.0)
(101, 1.0)
(113, 1.0)
(139, 1.0)
(179, 0.8181818181818182)
(229, 0.43209876543209874)
(269, 0.47126436781609193)
(307, 0.630057803468208)
(347, 0.6038647342995169)
(359, 0.6150234741784038)
(389, 0.6178861788617886)
(433, 0.5479041916167665)
(467, 0.5287356321839081)
(557, 0.4833887043189369)
(571, 0.4878048780487805)
(643, 0.45823389021479716)
(677, 0.4440961337513062)
(709, 0.4583333333333333)
(739, 0.46345657781599314)
(811, 0.4505660377358491)
(829, 0.45428365730741543)
(1019, 0.43365881677197016)
(1091, 0.41721854304635764)
(1171, 0.4145437702640111)
(1259, 0.46197874080130824)
(1297, 0.46870109546165883)
(1373, 0.42678923177938277)
(1439, 0.40950639853747717)
(1531, 0.4087468460891506)
(1607, 0.4076269002834321)
(1621, 0.4282603435760948)
(1747, 0.4188893747267162)
(1867, 0.4141248720573183)
(1901, 0.4242541650523053)
(1913, 0.4379060701689252)
(1949, 0.4514492753623188)
(1979, 0.467744770609949)
(2017, 0.4743740547807091)
(2089, 0.4630373345232783)
(2143, 0.4701306326141201)
(2213, 0.48154457694491765)
(2237, 0.49241906843327415)
(2269, 0.48822242244819153)
(2341, 0.45591880089696685)
(2357, 0.4661308840413318)
(2539, 0.4643208392525407)
(2609, 0.45676711193952574)
(2699, 0.4630640988789882)
(2797, 0.4607070892104781)
(2843, 0.46415498616194983)
(2939, 0.465698980473475)
(3023, 0.46937416777629826)
(3109, 0.4577106518282989)
(3181, 0.4676113360323887)
(3229, 0.4712626338814301)
(3257, 0.47776205021232215)
(3313, 0.4784102060843965)
(3347, 0.48792237285802764)
(3391, 0.49966514867398876)
(3469, 0.501585657886221)
(3623, 0.5050278652774413)
(3797, 0.5040042397833)
(3851, 0.5037633163501621)
(3877, 0.5123463814292216)
(3931, 0.5131607929515418)
(3967, 0.5193488123832399)
(4021, 0.5205680448566787)
(4139, 0.5219194013247712)
(4229, 0.5180279617365711)
(4289, 0.5181234615570249)
(4339, 0.5244355530151472)
(4451, 0.5203016508024095)
(4481, 0.5351780065053692)
(4507, 0.5315969072610255)
(4603, 0.5379436703736844)
(4799, 0.543567443386938)
(5077, 0.5317814164529493)
(5197, 0.5365891412871019)
(5303, 0.5374562163383677)
(5393, 0.5175602064220184)
(5419, 0.5247112683383623)
(5503, 0.5173029251363411)
(5651, 0.5215179968701096)
(5689, 0.5232738557020946)
(5741, 0.5252726410621148)
(5867, 0.5235525024533857)
(6011, 0.5329116483262571)
(6067, 0.5393642474649653)
(6199, 0.5410753880266075)
(6323, 0.5403582604942269)
(6427, 0.5398351648351648)
(6571, 0.537638006022081)
(6691, 0.5383373086861459)
(6899, 0.5398858313817331)
(6967, 0.5380058525444293)
(7057, 0.5494922981280652)
(7187, 0.5498350427542249)
(7219, 0.5541660222276232)
(7451, 0.5520454151499393)
(7669, 0.545822102425876)
(7699, 0.5513956487374683)
(7757, 0.5541555068582096)
(7867, 0.5589610806687942)
(8101, 0.5513187347931874)
(8219, 0.5534898828744483)
(8243, 0.5587174500066091)
(8419, 0.5566187325714911)
(8539, 0.5514345820001061)
(8597, 0.5562046348928728)
(8699, 0.5593400198541745)
(8731, 0.5595195744608765)
(8803, 0.56802220863559)
(8929, 0.5712683838465914)
(9011, 0.5651075786515967)
(9127, 0.5692793462109955)
(9161, 0.5740786090363659)
(9277, 0.5799610571915895)
(9341, 0.5778037971869395)
(9467, 0.5822309565013346)
(9479, 0.5841641971884443)
(9539, 0.5877867013964257)
(9587, 0.590959348659554)
(9811, 0.5925328288448698)
(9901, 0.591862890800758)
(9941, 0.5906363831154917)
(10061, 0.5907074695623561)
(10091, 0.5944282136894825)
(10163, 0.5972483763114408)
(10331, 0.5950258389810262)
(10357, 0.6029883649724433)
(10499, 0.6004463482236061)
(10597, 0.600421151146467)
(10691, 0.6033261329059532)
(10789, 0.5996540107438769)
(10859, 0.6037846361793197)
(10957, 0.6071894477460501)
(10987, 0.610382429054876)
(11059, 0.6075926026357178)
(11171, 0.6107291565798393)
(11197, 0.6121799662616643)
(11321, 0.618917728026186)
(11467, 0.6209832805022065)
(11731, 0.6183324490878668)
(11867, 0.6191246990267689)
(11923, 0.6220308013573479)
(11971, 0.62765305113243)
(12011, 0.6317755912443027)
(12101, 0.630423519026693)
(12197, 0.6357498155958617)
(12211, 0.636777432285618)
(12277, 0.6389013035381751)
(12413, 0.6409673724272993)
(12553, 0.6374766812163802)
(12763, 0.6330765451163444)
(13093, 0.6339983625630198)
(13451, 0.6323757796919421)
(13537, 0.6350259206217599)
(13649, 0.63778644036804)
(13723, 0.6361501194801794)
(14107, 0.6289779767376794)
(14173, 0.629001793941693)
(14389, 0.6224354118029035)
(14461, 0.6237389405951971)
(14891, 0.6168861800184197)
(15013, 0.6167753408417309)
(15091, 0.6157861552222174)
(15131, 0.6152997372127125)
(15193, 0.6171527523298341)
(15299, 0.6148187889141453)
(15391, 0.611957909933922)
(15619, 0.6123257842712991)
(15643, 0.6165597683418581)
(15773, 0.6194023914945516)
(15859, 0.6170053412696849)
(15889, 0.6195870637675863)
(15973, 0.6198583026020912)
(16061, 0.6195240860448347)
(16193, 0.6208258345151259)
(16363, 0.6234646256054664)
(16411, 0.6228907006528598)
(16649, 0.6253351539225422)
(16811, 0.6270856078409602)
(16883, 0.6308200170496798)
(16937, 0.6324454653959419)
(16987, 0.631828146071087)
(17299, 0.6337127673153015)
(17333, 0.635166918040555)
(17389, 0.6354561629896597)
(17483, 0.6350301936091596)
(17573, 0.6312230215827338)
(17681, 0.6307271421444196)
(17827, 0.6315755609769175)
(17989, 0.6295957631239562)
(18097, 0.6339997284283312)
(18149, 0.637220594477632)
(18269, 0.64054469918449)
(18289, 0.641550351120363)
(18379, 0.6440047748457064)
(18523, 0.6460037286504691)
(18859, 0.6439779411035399)
(19079, 0.6458188419738835)
(19211, 0.6484721181764322)
(19403, 0.6511029785012256)
(19583, 0.6490886940735255)
(19927, 0.6495650940720431)        
        };
\end{axis}
\end{tikzpicture}
}
 \label{d1rat-graph}
\end{minipage}%
\begin{minipage}{.5\textwidth}
\captionof{figure}{Proportion of values of degree 1 forms (excluding trivial zeroes) which are 0}
\resizebox{.9\linewidth}{!}{
\begin{tikzpicture}
    \begin{axis}[xlabel=$X$, ymin=0, ymax=1.1, scaled x ticks = false, scaled y ticks = false]]
    \addplot[]
        plot coordinates { 
        (11, 0.0)
(19, 0.0)
(43, 0.0)
(61, 0.0)
(73, 0.06896551724137931)
(83, 0.06060606060606061)
(101, 0.044444444444444446)
(113, 0.109375)
(139, 0.16666666666666666)
(179, 0.25471698113207547)
(229, 0.26515151515151514)
(269, 0.25)
(307, 0.3784722222222222)
(347, 0.3787878787878788)
(359, 0.3411458333333333)
(389, 0.3431151241534989)
(433, 0.34398496240601506)
(467, 0.35276073619631904)
(557, 0.34437869822485206)
(571, 0.3437815975733064)
(643, 0.3395225464190981)
(677, 0.33650039588281866)
(709, 0.35047067342505434)
(739, 0.3583776595744681)
(811, 0.37080745341614907)
(829, 0.3660092807424594)
(1019, 0.3619367209971237)
(1091, 0.3685868586858686)
(1171, 0.3560063643595863)
(1259, 0.36428110896196003)
(1297, 0.36106088004822184)
(1373, 0.36692068868190797)
(1439, 0.35887850467289717)
(1531, 0.36477358018513883)
(1607, 0.3744378698224852)
(1621, 0.38228941684665224)
(1747, 0.3908608731130151)
(1867, 0.39175058094500387)
(1901, 0.39970797590801244)
(1913, 0.38552050988723646)
(1949, 0.3888888888888889)
(1979, 0.3961000297707651)
(2017, 0.4014505119453925)
(2089, 0.3867080745341615)
(2143, 0.38272195005376985)
(2213, 0.38916934373565853)
(2237, 0.39707016191210487)
(2269, 0.40175828831691557)
(2341, 0.393741718479258)
(2357, 0.39866457187745485)
(2539, 0.4018346888594666)
(2609, 0.40268395106810295)
(2699, 0.40871035940803385)
(2797, 0.4100766345997065)
(2843, 0.4080527431127855)
(2939, 0.4074074074074074)
(3023, 0.4122204356088291)
(3109, 0.40595036661026507)
(3181, 0.40871044572984006)
(3229, 0.41064738744659873)
(3257, 0.40854940434477927)
(3313, 0.40106951871657753)
(3347, 0.40391955791032874)
(3391, 0.4063061591243261)
(3469, 0.4094679558302953)
(3623, 0.4147761194029851)
(3797, 0.4133983771251932)
(3851, 0.4091892400300978)
(3877, 0.411563071297989)
(3931, 0.41395700071073205)
(3967, 0.4205021824625092)
(4021, 0.4177811422323156)
(4139, 0.4223404255319149)
(4229, 0.4153392330383481)
(4289, 0.4106575876974867)
(4339, 0.40982691233947516)
(4451, 0.40996376811594204)
(4481, 0.4137444023424044)
(4507, 0.41387543718052194)
(4603, 0.41412451617135737)
(4799, 0.4146066712560235)
(5077, 0.4110524871145811)
(5197, 0.41201640221073277)
(5303, 0.4134913505838718)
(5393, 0.4085315832649713)
(5419, 0.40832905994440094)
(5503, 0.4078436482084691)
(5651, 0.40721977546396476)
(5689, 0.40257640944020295)
(5741, 0.4040313168644233)
(5867, 0.4058866883811607)
(6011, 0.4101771780450392)
(6067, 0.4117148261691997)
(6199, 0.4115266242253046)
(6323, 0.4131259797046448)
(6427, 0.41273175263561823)
(6571, 0.41286561264822136)
(6691, 0.4151084374281554)
(6899, 0.41515344788774666)
(6967, 0.41544651222603934)
(7057, 0.41238271327607956)
(7187, 0.4114643690902067)
(7219, 0.4132437019096107)
(7451, 0.41177896659285956)
(7669, 0.4105098113745427)
(7699, 0.41183141476751955)
(7757, 0.41216063263057817)
(7867, 0.41211726577183194)
(8101, 0.4115365056302216)
(8219, 0.41289071965107826)
(8243, 0.4136214946738614)
(8419, 0.414979482022991)
(8539, 0.415851219837355)
(8597, 0.41619442735803375)
(8699, 0.4161890935024579)
(8731, 0.4129863959187756)
(8803, 0.4077505219206681)
(8929, 0.40991405949368565)
(9011, 0.41025670410268167)
(9127, 0.411147255105805)
(9161, 0.41224830739470675)
(9277, 0.41527154823020807)
(9341, 0.41437256861036115)
(9467, 0.41524590506871956)
(9479, 0.4117526006564664)
(9539, 0.4120618818158762)
(9587, 0.41103209955456127)
(9811, 0.41234837997696056)
(9901, 0.4073590961130609)
(9941, 0.4077964346475331)
(10061, 0.40891654745742634)
(10091, 0.40899545919557767)
(10163, 0.4093758780556336)
(10331, 0.4093472657128523)
(10357, 0.4077417431344619)
(10499, 0.4085342955573334)
(10597, 0.4102637889688249)
(10691, 0.4094452194610613)
(10789, 0.4096503572622593)
(10859, 0.4092002106918479)
(10957, 0.4101913226875565)
(10987, 0.4101359887047633)
(11059, 0.41065171175133786)
(11171, 0.4107486576841252)
(11197, 0.4094401599542988)
(11321, 0.41059869987054387)
(11467, 0.41184149056033414)
(11731, 0.4108968622026957)
(11867, 0.4097184098121393)
(11923, 0.40943123170352946)
(11971, 0.40889146612544697)
(12011, 0.4109738950074122)
(12101, 0.41278616842158167)
(12197, 0.41193375040594116)
(12211, 0.40895010676370636)
(12277, 0.4079669431314843)
(12413, 0.4077236321755328)
(12553, 0.40872326960035765)
(12763, 0.409070035094085)
(13093, 0.40968552128175)
(13451, 0.41012488235520744)
(13537, 0.40993258256878257)
(13649, 0.4099585948978229)
(13723, 0.4080127649970344)
(14107, 0.4081122435747815)
(14173, 0.4078963552129884)
(14389, 0.4076335414103674)
(14461, 0.40735200684372985)
(14891, 0.40711476122707146)
(15013, 0.4063718957024366)
(15091, 0.4053257150578368)
(15131, 0.40441053681252354)
(15193, 0.40251469168549014)
(15299, 0.403448307809185)
(15391, 0.40368123440397424)
(15619, 0.40363412436439045)
(15643, 0.4009679903200968)
(15773, 0.39990242942180876)
(15859, 0.3975086663365205)
(15889, 0.3974981579476187)
(15973, 0.3975174059086899)
(16061, 0.39783879916895803)
(16193, 0.39573950584753836)
(16363, 0.3958421042080151)
(16411, 0.3955761867920411)
(16649, 0.39358261195662364)
(16811, 0.39299174703801776)
(16883, 0.3948689030851676)
(16937, 0.3950331919739859)
(16987, 0.3942929382803927)
(17299, 0.396677767407402)
(17333, 0.39502956027960967)
(17389, 0.39383103156708005)
(17483, 0.3950338679186054)
(17573, 0.3939203470140491)
(17681, 0.3933128205128205)
(17827, 0.3948273434828952)
(17989, 0.3953868163456336)
(18097, 0.3954312554764022)
(18149, 0.3951813030706753)
(18269, 0.3953909522347138)
(18289, 0.3933259330335585)
(18379, 0.39377327490084385)
(18523, 0.3947600526750988)
(18859, 0.39395952792694394)
(19079, 0.39354157961346875)
(19211, 0.3945331001837989)
(19403, 0.3955557252491314)
(19583, 0.3954627616378068)
(19927, 0.39623114853369473)
        };  
   \end{axis}
\end{tikzpicture}
}
 \label{d1port}
\end{minipage}
\end{figure}

At the least, we expect that almost all nontrivial zeroes are accounted for by
small degree forms.  Further, we expect the number of forms of small degree in 
a given squarefree level $N$ to be $O(N^\eps)$ for any $\eps > 0$.  (In fact,
it would not be that surprising if the number of forms of small degree of a given prime level $N$
is $O(1)$---e.g., see the data in \cite{me:maeda}.)  These two expectations
suggest \cref{conj:nnontriv}.

\begin{rem}
One might further ask if the number of nontrivial zeroes for $S(\calO)$ is
actually $o(N)$.  This is certainly not clear from the limited amount of data we have,
but we suspect this is possible for the following reasons.  First,
we expect 100\% of nontrivial zeroes to come from small degree forms.
Second, if $\phi$ is degree $d$, we expect the proportion of $\phi(x_i)$'s
which vanish nontrivially can be modeled with a close-to-normal $d$-dimensional discrete 
distribution with 
variance depending on some radius $R = \langle \phi, \phi \rangle_\Lambda^{1/2}$.  
It appears that, for fixed $d$, $\langle \phi, \phi \rangle_\Lambda^{1/2}$ 
tends to increase with $N$.  If the rate of increase is strictly faster than linear,
then as $N$ increases the proportion of $\phi(x_i)$'s which are nontrivially 0 should
tend to 0, which would suggest the total number of nontrivial zeroes for 
$S(\calO_N)$ (and thus also for a single form in $S(\calO_N)$) is
$o(N)$.  In \cref{d1port}, we graph the proportion of nontrivial zeroes among 
values of degree 1 forms (excluding trivial zeroes) of prime levels $\le X$.
We suspect that this proportion may be decreasing very slowly, similar to 
how the average rank of elliptic curves appears to converge very slowly.
Indeed, using the proportion of zeroes of a form as a proxy for the probability of
vanishing $L$-values as mentioned in the introduction, 
the expectation that 0\% of elliptic curves of should have rank $\ge 2$ also suggests 
that the proportion of values which are nontrivial zeroes over all degree 1 forms 
should be 0. 
\end{rem}

\subsection{Values of quaternionic modular forms} \label{sec:vals}
One phenomenon we observed is that the values of degree 1 (primitive integral) eigenforms
tend to be quite small.
Of the 529 normalized degree 1 eigenforms of prime level $N < 20000$, only 55 take
on a value of size $\ge 5$.  Of these, only 6 have absolute values $\ge 10$---there are 
2 forms with
maximum absolute value 11 (in levels 9473 and 16193) and 4 with maximum absolute
value 12 (2 in level 8747 and 1 each in levels 13723 and 17333).  Of these 6 forms, only the one in level 13723 has root number $-1$.  In \cref{fig:deg1vals}, we present a histogram
of the values of the unique-up-to-scalars degree 1 eigenform of level 17333.  The
general shape of the histogram seems to be typical for degree 1 eigenforms---the
values tend to be small and cluster roughly symmetrically around 0 following a roughly
normal-shaped distribution.  (The histogram will
be completely symmetric in the case of root number $-1$.)

\begin{figure}
\begin{minipage}{.5\textwidth}
\captionof{figure}{Histogram of values of the degree 1 form of prime level 17333}
\resizebox{.9\linewidth}{!}{
\begin{tikzpicture}
\begin{axis}[
    ymin=0, ymax=275,
    area style,
    xtick = {-12,-9,-6,-3,0,3,6,9,12}
    ]
\addplot+[ybar interval,mark=no] plot coordinates { ( -12.5000000000000 , 1 )
( -11.5000000000000 , 0 )
( -10.5000000000000 , 0 )
( -9.50000000000000 , 0 )
( -8.50000000000000 , 2 )
( -7.50000000000000 , 0 )
( -6.50000000000000 , 5 )
( -5.50000000000000 , 20 )
( -4.50000000000000 , 61 )
( -3.50000000000000 , 98 )
( -2.50000000000000 , 196 )
( -1.50000000000000 , 214 )
( -0.500000000000000 , 262 )
( 0.500000000000000 , 200 )
( 1.50000000000000 , 185 )
( 2.50000000000000 , 114 )
( 3.50000000000000 , 61 )
( 4.50000000000000 , 20 )
( 5.50000000000000 , 5 )
( 6.50000000000000 , 0 )
( 7.50000000000000 , 1 )
( 8.50000000000000 , 0 )
( 9.50000000000000 , 0 )
( 10.5000000000000 , 0 )
( 11.5000000000000 , 0 ) (12.5, 0) };
\end{axis}
\end{tikzpicture} 
}
\label{fig:deg1vals}
\end{minipage}%
\begin{minipage}{.5\textwidth}
\captionof{figure}{Histogram of values for a degree 2 form of prime level 16889}
\resizebox{.9\linewidth}{!}{
\begin{tikzpicture}
\begin{axis}[
    view = {130}{45},
    xmin = -7.5,
    ymin = -7.5, 
    xmax = 7.5,
    ymax = 7.5,
    zmin = 0,
    unbounded coords = jump,
    colormap={pos}{color(0cm)=(white); color(6cm)=(blue)}
    ]
\addplot3+[only marks, scatter, mark=cube*,mark size = 6] table {qmf-zeroes-d2hist.dat};
\end{axis}
\end{tikzpicture} 
}
\label{fig:deg2vals}
\end{minipage}
\end{figure}

Consider a degree $d$ eigenform $\phi$, normalized so its values
are in $K=K_\phi$, as a lattice point in $K^h \hookrightarrow (\R^d)^h$
as explained above.
We expect that the $(d+1)$-dimensional histograms of values of degree $d$
eigenforms will have an analogous shape of clustering about $0$ and then rapidly
tapering off, similar to a $d$-dimensional normal distribution.
In \cref{fig:deg2vals}, we plot a histogram of the values the unique (up to
scaling and Galois conjugation) degree 2 form of prime level 16889, which has
has root number $-1$, $h=1408$ total values, and 182 zeroes (72 trivial, 110 nontrivial).  
The most common nonzero value occurs 65 times.
So even if we exclude trivial zeroes, the distribution of values
looks similar to a 2-dimensional normal
distribution.


\subsection{Zerofree forms}
In \cref{fig:zerofree,fig:zerofree2}, we plot the proportion of
zero-free forms among eigenforms with no trivial zeroes in levels $N \le X$
for $X < 4000$.
The former plot restricts to $N$ prime and the latter plot restricts to $N$ non-prime.
These data indeed suggest that this proportion tends to 100\%, in line with what
we expect from \cref{conj11}(ii).  Note that convergence appears to be slower
in the non-prime level case, which is to be expected in light of \cref{conj12}
as there tend to be more small Galois orbits when there are more Atkin--Lehner
eigenspaces (cf.\ \cite{me:maeda}).

\begin{figure}
\begin{minipage}{.5\textwidth}
\captionof{figure}{Proportion of zero-free forms for prime level}
\resizebox{.9\linewidth}{!}{
\begin{tikzpicture}
   \begin{axis}[xlabel=$X$, ymin = 0.9, ymax = 1.01, scaled x ticks = false, scaled y ticks = false]
    \addplot[only marks, mark size = 0.7] coordinates {
(13, 1.0)
(19, 1.0)
(29, 1.0)
(37, 1.0)
(43, 1.0)
(53, 1.0)
(61, 1.0)
(71, 0.9743589743589743)
(79, 0.9787234042553191)
(89, 0.9830508474576272)
(101, 0.9857142857142858)
(107, 0.9879518072289156)
(113, 0.9787234042553191)
(131, 0.9819819819819819)
(139, 0.9761904761904762)
(151, 0.9583333333333334)
(163, 0.9620253164556962)
(173, 0.9666666666666667)
(181, 0.9651741293532339)
(193, 0.968609865470852)
(199, 0.963265306122449)
(223, 0.9664179104477612)
(229, 0.9556313993174061)
(239, 0.9596273291925466)
(251, 0.9629629629629629)
(263, 0.9659685863874345)
(271, 0.9685990338164251)
(281, 0.9638009049773756)
(293, 0.9661016949152542)
(311, 0.9607072691552063)
(317, 0.9591078066914498)
(337, 0.961335676625659)
(349, 0.9636363636363636)
(359, 0.9659969088098919)
(373, 0.9677891654465594)
(383, 0.9696551724137931)
(397, 0.9672346002621232)
(409, 0.9689054726368159)
(421, 0.9705535924617197)
(433, 0.9632107023411371)
(443, 0.964021164021164)
(457, 0.9655870445344129)
(463, 0.9671814671814671)
(479, 0.9689213893967094)
(491, 0.9659685863874345)
(503, 0.9633333333333334)
(521, 0.9649960222752586)
(541, 0.9663351185921959)
(557, 0.9668874172185431)
(569, 0.9676511954992968)
(577, 0.9586440677966102)
(593, 0.958984375)
(601, 0.9606741573033708)
(613, 0.9620481927710843)
(619, 0.9633294528521537)
(641, 0.9646662927650028)
(647, 0.9654054054054054)
(659, 0.9661105318039624)
(673, 0.965133906013138)
(683, 0.9653150952613581)
(701, 0.9659896079357582)
(719, 0.9667122663018696)
(733, 0.9672855879752431)
(743, 0.9679213002566296)
(757, 0.9688667496886675)
(769, 0.9698310539018503)
(787, 0.970714564623194)
(809, 0.9708554125662378)
(821, 0.9717223650385605)
(827, 0.9725196288365453)
(839, 0.9733748271092669)
(857, 0.9740915208613729)
(863, 0.9748366013071895)
(881, 0.9755089058524173)
(887, 0.9761904761904762)
(911, 0.9768560264502555)
(929, 0.977491961414791)
(941, 0.9780876494023905)
(953, 0.9786407766990292)
(971, 0.9792172739541161)
(983, 0.979758149316509)
(997, 0.9797279958942776)
(1013, 0.9802054622901528)
(1021, 0.980698753970193)
(1033, 0.9811815150071462)
(1049, 0.9816535067347887)
(1061, 0.9811534968210718)
(1069, 0.9811446317657497)
(1091, 0.9815977484303962)
(1097, 0.9813559322033898)
(1109, 0.981799379524302)
(1123, 0.9821681864235056)
(1151, 0.9825846032060163)
(1163, 0.9829424307036247)
(1181, 0.9829351535836177)
(1193, 0.9821892393320965)
(1213, 0.9825295723384896)
(1223, 0.9813200498132005)
(1231, 0.9817232375979112)
(1249, 0.9820757937862752)
(1277, 0.9821249582358837)
(1283, 0.9821720641151456)
(1291, 0.9825264507855082)
(1301, 0.9827044025157232)
(1307, 0.9830351634793337)
(1321, 0.9815737803957106)
(1361, 0.9816405093278058)
(1373, 0.981841952353283)
(1399, 0.9821886577372471)
(1423, 0.9822327923894796)
(1429, 0.9825693110074115)
(1439, 0.9826449616574734)
(1451, 0.98297479213409)
(1459, 0.9832706523148749)
(1481, 0.9831996945399007)
(1487, 0.9833853841349157)
(1493, 0.9836569181617105)
(1511, 0.9839759036144579)
(1531, 0.9841307437233539)
(1549, 0.984391380314502)
(1559, 0.9846839638815865)
(1571, 0.9846119285634056)
(1583, 0.9848769179821172)
(1601, 0.9851216333622936)
(1609, 0.985364811451768)
(1619, 0.9853991596638656)
(1627, 0.9854112778065184)
(1657, 0.9856327695129407)
(1667, 0.9858618269327183)
(1693, 0.9857749678948928)
(1699, 0.9859935803910126)
(1721, 0.9862148190695003)
(1733, 0.9864176570458404)
(1747, 0.9866133680394162)
(1759, 0.9868131868131869)
(1783, 0.9870047829618266)
(1789, 0.9871908913004803)
(1811, 0.987391646966115)
(1831, 0.9875904860392968)
(1861, 0.987784187309128)
(1871, 0.9878863826232247)
(1877, 0.9879696769940672)
(1889, 0.9881522356569017)
(1907, 0.9880838131797824)
(1931, 0.9880248956117546)
(1949, 0.988116504854369)
(1973, 0.9882884262094305)
(1987, 0.9883843717001056)
(1997, 0.9885408140486643)
(2003, 0.98869890658252)
(2017, 0.9888430051438093)
(2029, 0.9887776983559685)
(2053, 0.9883688143239814)
(2069, 0.9883939120161235)
(2083, 0.9884111636837414)
(2089, 0.9882305194805194)
(2111, 0.988401546460472)
(2129, 0.9885488647581441)
(2137, 0.9886924876527164)
(2143, 0.9887755756526201)
(2161, 0.988914227796782)
(2203, 0.9889229613868202)
(2213, 0.9890069169960475)
(2237, 0.9890773736880645)
(2243, 0.9892207635794291)
(2267, 0.9892927250014871)
(2273, 0.9894229639205547)
(2287, 0.9895506792058516)
(2297, 0.9895576338286763)
(2311, 0.9896889694634865)
(2339, 0.9894807520143241)
(2347, 0.989379356123465)
(2357, 0.9891891891891892)
(2377, 0.9893134715025906)
(2383, 0.9893327644141021)
(2393, 0.9892461781760675)
(2411, 0.9893871605452086)
(2423, 0.9895104895104895)
(2441, 0.9896304579881056)
(2459, 0.9897559505875264)
(2473, 0.9898658718330849)
(2503, 0.9899783847514246)
(2531, 0.9900922778047596)
(2543, 0.9902059628402707)
(2551, 0.9903216623968119)
(2579, 0.9904297241508726)
(2593, 0.9903547414792487)
(2617, 0.9902796882164145)
(2633, 0.9903859235408825)
(2657, 0.9904873014448533)
(2663, 0.9905949159309703)
(2677, 0.9906492822336362)
(2687, 0.9907519972212574)
(2693, 0.9908469769240685)
(2707, 0.9908576774248417)
(2713, 0.990956507108606)
(2729, 0.9910565723793677)
(2741, 0.9911493495801087)
(2753, 0.9912377226229775)
(2777, 0.9913260983580102)
(2791, 0.991419905818501)
(2801, 0.9914654865857995)
(2819, 0.991553261379634)
(2837, 0.9915976148067839)
(2851, 0.9915286721864459)
(2861, 0.9915357169968876)
(2887, 0.9916247277097574)
(2903, 0.991711577773648)
(2917, 0.9917553093599323)
(2939, 0.9918053683942164)
(2957, 0.9918473359546914)
(2969, 0.9918551066338013)
(2999, 0.9919400452488688)
(3011, 0.9920190422850742)
(3023, 0.9920274532912753)
(3041, 0.9921032754240198)
(3061, 0.9921750076548839)
(3079, 0.9922493681550126)
(3089, 0.9922547906790412)
(3119, 0.992331592516692)
(3137, 0.9924026590693258)
(3167, 0.9924760823739257)
(3181, 0.992512853470437)
(3191, 0.9925857570164832)
(3209, 0.9926236484569555)
(3221, 0.9926895560623574)
(3251, 0.9927592288888202)
(3257, 0.9927922954238744)
(3271, 0.9927348998388911)
(3301, 0.9928025055712822)
(3313, 0.9928048962531721)
(3323, 0.9928715097018457)
(3331, 0.9929069699278973)
(3347, 0.9929683867968386)
(3361, 0.9930329638692961)
(3373, 0.9930369271160322)
(3391, 0.9930983764213385)
(3413, 0.9931606682363494)
(3449, 0.9932207157146032)
(3461, 0.9932799030543391)
(3467, 0.9932559726962458)
(3491, 0.9933142052836725)
(3511, 0.9933719744539258)
(3527, 0.99337730730358)
(3533, 0.9934326783594883)
(3541, 0.9934886639993724)
(3557, 0.993489482011776)
(3571, 0.99354523478887)
(3583, 0.9935475643968376)
(3607, 0.9935994737907307)
(3617, 0.9936510326482472)
(3631, 0.9936806488530626)
(3643, 0.9937299432238953)
(3671, 0.9937873006555131)
(3677, 0.9938355499466072)
(3697, 0.9938836447697939)
(3709, 0.9938855450463361)
(3727, 0.9939373845498034)
(3739, 0.9939836901600433)
(3767, 0.9940330513017738)
(3779, 0.9940590397373957)
(3797, 0.9941050989747001)
(3821, 0.9941072078631235)
(3833, 0.9941528389208715)
(3851, 0.9941994580188573)
(3863, 0.9942454674724494)
(3881, 0.9942894939918422)
(3907, 0.994309975051429)
(3917, 0.9943131253934145)
(3923, 0.9943581903141756)
(3931, 0.9944014701482969)
(3947, 0.9943591483586394)
(3989, 0.9943821406779304)    
    };
\end{axis}
\end{tikzpicture} 
}
\label{fig:zerofree}
\end{minipage}%
\begin{minipage}{.5\textwidth}
\captionof{figure}{Proportion of zero-free forms for non-prime level}
\resizebox{.9\linewidth}{!}{
\begin{tikzpicture}
   \begin{axis}[xlabel=$X$, ymin = 0.9, ymax = 1.01, scaled x ticks = false, scaled y ticks = false]
    \addplot[only marks, mark size = 0.7] coordinates {
(42, 1.0)
(78, 1.0)
(110, 1.0)
(138, 1.0)
(170, 1.0)
(186, 1.0)
(222, 0.9767441860465116)
(238, 0.9838709677419355)
(258, 0.9726027397260274)
(282, 0.9764705882352941)
(290, 0.9693877551020408)
(322, 0.9541284403669725)
(357, 0.9603174603174603)
(374, 0.9420289855072463)
(402, 0.9477124183006536)
(418, 0.9390243902439024)
(430, 0.9444444444444444)
(438, 0.9427083333333334)
(465, 0.9488372093023256)
(483, 0.9426229508196722)
(506, 0.943609022556391)
(534, 0.9464285714285714)
(574, 0.9446254071661238)
(595, 0.9424242424242424)
(606, 0.9454022988505747)
(615, 0.9459459459459459)
(638, 0.945679012345679)
(646, 0.9411764705882353)
(658, 0.9439461883408071)
(670, 0.9431578947368421)
(705, 0.9444444444444444)
(730, 0.943609022556391)
(754, 0.945750452079566)
(777, 0.9411764705882353)
(790, 0.9433333333333334)
(806, 0.9422776911076443)
(826, 0.9430284857571214)
(854, 0.9382183908045977)
(885, 0.9413369713506139)
(897, 0.9392338177014531)
(906, 0.9387755102040817)
(938, 0.9399038461538461)
(957, 0.9423298731257209)
(970, 0.9418994413407821)
(987, 0.9434167573449401)
(1002, 0.944560669456067)
(1015, 0.9469469469469469)
(1030, 0.9457889641819942)
(1045, 0.9476635514018692)
(1066, 0.9491371480472298)
(1085, 0.948943661971831)
(1095, 0.9490662139219015)
(1106, 0.9487603305785124)
(1130, 0.9480415667466027)
(1158, 0.9485981308411215)
(1173, 0.9500372856077554)
(1185, 0.9485507246376812)
(1221, 0.9477769936485533)
(1239, 0.9483344663494222)
(1258, 0.948582729070534)
(1270, 0.9485530546623794)
(1298, 0.9495641344956414)
(1311, 0.9523248969982343)
(1338, 0.9533141210374639)
(1358, 0.9520089285714286)
(1374, 0.952354874041621)
(1394, 0.9533762057877814)
(1407, 0.9544264012572027)
(1426, 0.9561491935483871)
(1442, 0.9554013875123885)
(1455, 0.9540396710208031)
(1474, 0.9548049476688868)
(1491, 0.956581352833638)
(1505, 0.9556933983163491)
(1515, 0.9554779572239197)
(1534, 0.955945252352438)
(1547, 0.9569397993311036)
(1562, 0.957769577695777)
(1581, 0.9565741857659831)
(1595, 0.9559228650137741)
(1605, 0.9551552318896129)
(1615, 0.9559819413092551)
(1634, 0.9563448020717721)
(1653, 0.9576149425287356)
(1670, 0.9565831274267561)
(1695, 0.9566724436741768)
(1705, 0.9573378839590444)
(1738, 0.957817207900904)
(1749, 0.9585934932632271)
(1771, 0.9590058102001291)
(1790, 0.9591384225530567)
(1810, 0.9597378277153558)
(1834, 0.9577723378212974)
(1855, 0.9586454899610428)
(1878, 0.958653278204371)
(1887, 0.9582366589327146)
(1905, 0.958974358974359)
(1930, 0.9591088550479413)
(1955, 0.9600440892807937)
(1970, 0.9602612955906369)
(1990, 0.9605157131345688)
(2013, 0.9611419508326725)
(2022, 0.9606523427388041)
(2054, 0.9612541422380831)
(2067, 0.9610778443113772)
(2085, 0.9601279842480925)
(2093, 0.9588249458222972)
(2110, 0.9559137236311922)
(2121, 0.9549170754496613)
(2139, 0.9547970479704797)
(2158, 0.9553327256153145)
(2185, 0.9552406657669815)
(2202, 0.9555801104972376)
(2230, 0.9558407657167718)
(2238, 0.9562513403388376)
(2255, 0.9573756790639365)
(2266, 0.9560845475066694)
(2278, 0.9563451776649746)
(2290, 0.9567839195979899)
(2301, 0.957089183310263)
(2318, 0.9574926542605289)
(2337, 0.9573606019679722)
(2345, 0.9574387231616949)
(2365, 0.957983193277311)
(2379, 0.9579276134484659)
(2390, 0.9586776859504132)
(2405, 0.9590207557211282)
(2410, 0.9593053850201719)
(2431, 0.9590673575129534)
(2451, 0.9591280653950953)
(2482, 0.959493670886076)
(2494, 0.9595741141241058)
(2510, 0.9596589045588717)
(2526, 0.9600324939073923)
(2546, 0.9596398134748352)
(2570, 0.9599620193068523)
(2595, 0.95984375)
(2613, 0.9601043424888752)
(2634, 0.9599393019726858)
(2658, 0.960422556167237)
(2667, 0.9601704879482658)
(2679, 0.9607730025959043)
(2690, 0.9610222729868646)
(2698, 0.9614461234288942)
(2710, 0.9613553626435172)
(2717, 0.9619572708476912)
(2737, 0.962166575939031)
(2751, 0.9621979208856487)
(2765, 0.9618401808270177)
(2770, 0.9620036813042335)
(2782, 0.9617237338888166)
(2795, 0.9621718991250643)
(2810, 0.9623676489348131)
(2829, 0.9624353964452288)
(2847, 0.962657674534782)
(2874, 0.9629721362229102)
(2895, 0.9634340222575517)
(2915, 0.9636341669687085)
(2930, 0.9638236703402012)
(2945, 0.9641330166270784)
(2955, 0.96435713445477)
(2994, 0.9642980935875216)
(3014, 0.9645642201834862)
(3026, 0.9649778986739205)
(3055, 0.9650811415780638)
(3070, 0.9652954873045793)
(3082, 0.9653846153846154)
(3115, 0.9657906168549087)
(3129, 0.9659666128163705)
(3145, 0.9657534246575342)
(3165, 0.9660963244613435)
(3178, 0.9663979901601591)
(3206, 0.9662211169826961)
(3243, 0.9663564781675018)
(3262, 0.9666227046768794)
(3282, 0.9665760595993154)
(3297, 0.9668124006359301)
(3310, 0.9666896619690549)
(3333, 0.9669687225957322)
(3345, 0.9672619047619048)
(3358, 0.9675577965940443)
(3370, 0.9674168942461625)
(3382, 0.9675609984107694)
(3405, 0.967997786590427)
(3417, 0.9679334916864608)
(3426, 0.967470097861544)
(3441, 0.967773788150808)
(3454, 0.9680766161213089)
(3471, 0.9682288923995085)
(3485, 0.9684641519296106)
(3502, 0.9685420447670902)
(3515, 0.9687339825730394)
(3530, 0.9688137321549966)
(3538, 0.9688686408504176)
(3562, 0.9690747569560845)
(3570, 0.9693928334439283)
(3590, 0.9692421868557763)
(3598, 0.9688470973017171)
(3614, 0.9691722631280848)
(3621, 0.9694492937628948)
(3638, 0.9694560340077147)
(3657, 0.9697605721393034)
(3674, 0.9696548529071114)
(3685, 0.9697132341665389)
(3702, 0.9699202430687429)
(3723, 0.9698454579721071)
(3731, 0.9700612214424369)
(3759, 0.9703660236191594)
(3782, 0.9704662835527752)
(3790, 0.9701320251064137)
(3806, 0.9702133753401117)
(3818, 0.9702991301888126)
(3835, 0.970464725643897)
(3846, 0.9704430071632242)
(3857, 0.970519516217702)
(3878, 0.9705922582410067)
(3890, 0.9707061782097465)
(3913, 0.9709794437726723)
(3922, 0.9709496460531588)
(3938, 0.971136619998673)
(3954, 0.9705360078921408)
(3965, 0.9701686966716603)
(3982, 0.9703027950310559)
(3999, 0.9703408872054953)
    };
\end{axis}
\end{tikzpicture} 
}
\label{fig:zerofree2}
\end{minipage}%
\end{figure}


\section{Periods and $L$-values} \label{sec:Lvals}


Consider a quadratic field $K=\Q(\sqrt{-D})$ of discriminant $-D$.
Identify the ideal class group $\Cl(K)$ with $K^\times \bs \hat K^\times / \hat \frako_K^\times$.
It is well known that 
$K$ embeds in $B$ if and only if $K$ is imaginary quadratic and
$K_p$ is non-split (i.e., ${-D \leg p} \ne 1$) for all $p | N$.
Assume $K$ embeds into $B$. 

For an ideal $\calJ$ in $K$ (resp.\ $B$) or an idele $\alpha$ in $\hat K^\times$
(resp.\ $\hat B^\times$), we
sometimes denote the corresponding $\frako_K$- (resp.\ $\calO$-)ideal class by $[\calJ]$ or $[\alpha]$.

\subsection{The ideal class map} \label{sec61}

Let $\calO$ be a maximal order in $B$ such that $\frako_K$ embeds into
$\calO$.  There is always some such $\calO$.
Denote by $\iota: K \to B$ an embedding such that 
$\iota(\frako_K) \subset \calO$.\footnote{Eichler showed that, summing over
maximal ideals $\calO$, the total number of such embeddings up to conjugation
by units is essentially $h_K$.  So in general, there are many choices for
such an embedding.  Moreover $\frako_K$ embeds in every maximal $\calO$ when
$h_K$ is sufficiently large relative to $N$.}
Then this embedding induces a map (necessarily not injective if $h_K > h$)
of ideal classes
$\iota_*: \Cl(K) \to \Cl(\calO)$ given by $\iota_*([t]) =  [\iota(t)]$ for  $t \in \hat K^\times$. 
This is well defined as 
$\iota(K^\times t \hat \frako_K^\times) \subset B^\times \iota(t) \hat \calO^\times$.
In what follows, we fix our embedding $\iota : K \to B$, and will simply write
$t$ for $\iota(t)$, where $t \in \hat K^\times$.  

The precise behavior of these ideal class maps is mysterious and connected
to deep problems in arithmetic, such as values of $L$-functions.
However, we can identify a few elementary properties of these maps.
First, note that $\iota_*$ is not just a map of sets, but a map of pointed sets,
where the distinguished points for $\Cl(K)$ and $\Cl(\calO)$ are the ideal
classes of $\frako_K$ and $\calO$, respectively.  In particular, if $K$ has class
number 1, then $\iota_*$ does not depend upon the choice of $\iota$ (and in this
case, there is only one choice for $\calO$ up to isomorphism).

\begin{rem} \label{rem:61}
If $K=\Q(i)$ or $\Q(\sqrt{-3})$, then $\frako_K^\times$ has order 4 or 6.
If $N > 2$ is prime and $K$ embeds in $B$, then there is a unique $i$ such that 
$[\frako_K^\times: \Z^\times]$ divides $e_i$ (e.g., see \cite[Table 1.3]{gross}).  Consequently, there is a unique maximal order $\calO \subset B$
up to isomorphism such that $\calO^\times$ contains a 4th or 6th root of unity.
This provides one way to specify exactly which $\calO$ possess
an embedding of $\frako_K$ in these
special cases.
\end{rem}

Next we give two slightly more interesting properties.
For $d | N$, write $\sigma_d(x) = \prod_{p | d} \sigma_p(x)$.  

\begin{prop} \label{prop:classmap1}
The class map satisfies $\iota_*(t^{-1}) = \sigma_N(\iota_*(t))$ for
$t \in \Cl(K)$.
\end{prop}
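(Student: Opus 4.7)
The plan is to construct a global element $\xi \in B^\times$ with two properties:
\begin{enumerate}
\item[(a)] conjugation by $\xi$ realizes the nontrivial Galois automorphism of $\iota(K)$, i.e.\ $\xi\iota(t)\xi^{-1} = \iota(\bar t)$ for every $t \in K$;
\item[(b)] $\xi$ generates the two-sided ideal $\prod_{p \mid N}\frakP_p$ as a right $\calO$-ideal; equivalently $\xi \in \hat\varpi_N \hat\calO^\times$ adelically.
\end{enumerate}
Granted such a $\xi$, the proposition follows formally. Since $\iota(t^{-1}) = \iota(\bar t)/N_{K/\Q}(t)$ and $N_{K/\Q}(t) \in \Q^\times$ is a central scalar, $\iota_*(t^{-1}) = [\iota(\bar t)]$. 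Observe also that $\xi^2$ commutes with $\iota(K)$ (by (a), conjugation by $\xi^2$ is trivial on $\iota(K)$) and with $\xi$ itself, hence with all of $B$, so $\xi^2 \in Z(B) = \Q^\times$. Combining,
\[
[\iota(\bar t)] \;=\; [\xi\iota(t)\xi^{-1}] \;=\; [\iota(t)\xi^{-1}] \;=\; [\iota(t)\xi] \;=\; [\iota(t)\hat\varpi_N] \;=\; \sigma_N(\iota_*(t)),
\]
where the first step is (a), the second and third exploit that left multiplication by elements of $B^\times$ (namely $\xi$ and $\xi^2 \in \Q^\times$) is trivial on $\Cl(\calO) = B^\times \bs \hat B^\times / \hat\calO^\times$, and the fourth uses that $\xi \hat\varpi_N^{-1} \in \hat\calO^\times$ by (b).

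The substance of the argument lies in producing $\xi$. Applying Skolem--Noether to the two embeddings $\iota,\bar\iota: K \hookrightarrow B$ yields some $\xi_0 \in B^\times$ satisfying (a), and every element with property (a) has the form $\iota(k)\xi_0$ for $k \in K^\times$. I would choose $k$ so that $\xi := \iota(k)\xi_0$ also satisfies (b); this breaks up into finitely many local conditions on $v_{K_p}(k_p)$, namely $v_{B_p}(\iota(k_p)\xi_{0,p}) = 1$ at each $p \mid N$ and $\iota(k_p)\xi_{0,p} \in \calO_p^\times$ at each $p \nmid N$ (the latter being automatic at almost all $p$). For $p \mid N$ the analysis splits according to whether $K_p$ is ramified or inert over $\Q_p$---both occur, since $K_p$ is non-split---and in the inert subcase the apparent parity obstruction (namely $v_{B_p}(\iota(k_p)) = 2v_{K_p}(k_p)$ is forced to be even) is exactly cancelled by the odd $B_p$-valuation of the anti-commuting uniformizer of the local division algebra $B_p$. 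Solving these local conditions simultaneously by a global $k \in K^\times$ then reduces to a Hasse-principle check that the required value of $N_{K/\Q}(k) \in \Q^\times$ lies in the image of the global norm map, together with weak approximation in $K^\times$.

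The main obstacle is the construction of $\xi$: it genuinely relies on the standing hypothesis $\iota(\frako_K) \subset \calO$ and on the non-splitness of $K_p$ for every $p \mid N$. Without such an embedded $K$ the conclusion can fail---already the trivial class $t = 1$ specializes the proposition to the statement that $\prod_{p \mid N}\frakP_p$ is principal as a right $\calO$-ideal, which is not automatic for a maximal order in a definite quaternion algebra, and the hypothesis $\iota(\frako_K)\subset \calO$ is precisely what forces this principality (since the $\frako_K$-module structure on $\calO$ provides the required anti-commuting generator). Once $\xi$ is in hand, the rest of the proof is the short double-coset calculation displayed above.
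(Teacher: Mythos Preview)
Your strategy is exactly the paper's: produce a global element (the paper's $\alpha$, your $\xi$) that conjugates $\iota(K)$ by Galois and lies in $\hat\varpi_N\hat\calO^\times$, then run the double-coset computation you display---that chain of equalities is the paper's argument verbatim, and your closing remark that the hypothesis $\iota(\frako_K)\subset\calO$ is what makes such a $\xi$ available is on target. The paper constructs $\alpha$ concretely rather than abstractly: via a prime-by-prime Hilbert symbol calculation and Dirichlet's theorem on primes in progressions, it finds an auxiliary prime $q\nmid DN$ split in $K$ with $B\simeq{-D,\,-Nq \leg \Q}$, and then takes $\alpha=j\,\iota(a)$ where $j^2=-Nq$ and $a\in K$ has norm $q^{-1}$.

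Your abstract construction has a real gap at the globalization step. The local solvability you describe is correct, but the resulting conditions on $k$ are $k\in k_p^{\mathrm{loc}}\,\frako_{K,p}^\times$ at \emph{every} finite prime---equivalently, the principal ideal $(k)$ must equal a specific fractional ideal $\mathfrak a$ determined by $\xi_0$. The obstruction is therefore the class $[\mathfrak a]\in\Cl(K)$, and you have not shown it vanishes. This is not a Hasse-norm-plus-weak-approximation problem: weak approximation controls only finitely many places with open conditions, whereas here there is an integrality constraint at every place; and once $N_{K/\Q}(k)$ is pinned down (it must equal $N/N_{B/\Q}(\xi_0)$), the remaining freedom in $k$ is just the finite group $\ker N_{K/\Q}=\mu_K$, leaving no room to adjust. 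The paper's auxiliary-prime device is precisely a way to manufacture $\alpha$ directly and sidestep this class-group obstruction.
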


\begin{proof}
Consider the orthogonal decomposition (with respect to the norm form)
$B = K \oplus j K$, where $j \in B^\times$.  Let $\varpi_N \in B^\times$ be an element of reduced norm $N$.
We claim there is a rational element $\alpha \in jK \cap
\varpi_N \hat \calO^\times$.  Assuming there is such an $\alpha$, note conjugation by
$\alpha$ acts on $K$ as the standard (Galois) involution $a \mapsto \bar a$ since
$\alpha \in jK$.
Then for $t \in \hat K^\times$, we have
\[ B^\times \bar t \hat \calO^\times = B^\times \alpha^{-1} t \alpha \hat \calO^\times
= B^\times t \varpi_N \hat \calO^\times = \sigma_N(B^\times t \hat \calO^\times). \]
Since $[ \hspace{1.5pt} \bar t \hspace{1.5pt} ] = [t^{-1}]$ in $\Cl(K)$,
we get that $\iota_*([t^{-1}]) = \sigma_N(\iota_*([t]))$.  

So it suffices to prove the existence of an $\alpha$ as above.  For this, we
claim that there exists a prime $q \nmid DN$ split in $K$
such that $B \simeq {-D, -Nq \leg \Q }$.  That is, we want to show
the existence of a prime $q \nmid DN$ split in $K$ such that ${-D, -Nq \leg \Q}$
has discriminant $N$, i.e., such that ${-D, -Nq \leg \Q_p}$ is a division algebra if and only if
$p | N$.  In fact, since ${-D, -Nq \leg \Q}$ is necessarily definite, it
must be ramified at an odd number of finite primes, so 
we may restrict this latter condition to odd primes $p$.

Standard splitting criteria for quaternion algebras imply the following result.

\begin{lem} Suppose $p$ is odd.  Assume $a, b \in \Z$ are nonzero
and squarefree with $v_p(a) \le v_p(b)$.  Then ${a, b \leg \Q_p}$ is a
division algebra if and only if (1) $p \nmid a$, $p | b$ and ${a \leg p} = -1$;
or (2) $p | a$ and $-b/a$ is a nonsquare mod $p$.
\end{lem}

Assume $q$ is a prime not dividing $DN$.
First, we want $q$ to satisfy (i) ${-D \leg q} = +1$ for $q$ to be split in $K$, 
and by the above lemma, this also means that ${-D, -Nq \leg \Q}$
is split at $q$---i.e., the localization at $q$ is isomorphic to
$M_2(\Q_q)$---as desired.  Also, if $p \nmid DNq$ is odd, then 
${-D, -Nq \leg \Q}$ is split at $p$  by the lemma.

Next consider a prime $p$ such that $p | N$ but $p \nmid D$. 
Then ${-D \leg p} = -1$ since $K$ embeds in $B$, which means that 
${-D, -Nq \leg \Q}$ is divison at $p$, also as desired.

Now consider an odd prime $p$ such that $p | D$ but $p \nmid N$.
Then ${-D, -Nq \leg \Q}$ is split at $p$ if and only if (ii) ${q \leg p} = {-N \leg p}$.

Finally, consider an odd prime $p | \gcd(D,N)$.  Then ${-D, -Nq \leg \Q}$
is division at $p$ if and only if (iii) ${cq \leg p} = -1$ where $c$ is an integer equivalent
to $N/D \mod p$.  

By Dirichlet's theorem on primes in progressions, there exists a prime $q$
satisfying (i), (ii) and (iii) as above, which then gives us a $q$ as claimed.  Consequently,
we may take our orthogonal decomposition $B = K \oplus j K$ above with
$j^2 = -Nq$, and then we may take $\alpha = ja$ where $a \in K$ is an element of
norm $q^{-1}$.  
\end{proof}

\begin{cor} \label{cor:fixed-pt-1cpg}
Suppose $t \in \Cl(K)$ has order $\le 2$.  Then $\iota_*(t)$ is
a fixed point of $\sigma_N$.
\end{cor}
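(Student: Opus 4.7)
The plan is to deduce the corollary as an immediate consequence of \cref{prop:classmap1}, which tells us that $\iota_*(t^{-1}) = \sigma_N(\iota_*(t))$ for all $t \in \Cl(K)$. I would apply this identity to the given $t$ of order at most $2$.

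More precisely, if $t \in \Cl(K)$ satisfies $t^2 = 1$, then $t = t^{-1}$ in $\Cl(K)$, so $\iota_*(t) = \iota_*(t^{-1})$. Combining this with \cref{prop:classmap1} gives
\[ \sigma_N(\iota_*(t)) = \iota_*(t^{-1}) = \iota_*(t), \]
which is exactly the statement that $\iota_*(t)$ is a fixed point of $\sigma_N$. There is nothing else to check: the only inputs are the preceding proposition and the fact that order-$\le 2$ elements in an abelian group are their own inverses.

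There is no real obstacle here; the work has already been done in establishing \cref{prop:classmap1}. The only thing worth noting is that the conclusion depends on the choice of embedding $\iota$ (since $\iota_*$ does), but the statement is about the image of a single $t$ under a fixed $\iota_*$, so no subtlety arises. In particular, applying this to $t = 1$ recovers the fact that the trivial class maps to a fixed point of $\sigma_N$, giving an alternative (embedding-dependent) proof of the existence of fixed points asserted in \cref{sigmaN-fixpt} whenever some imaginary quadratic $K$ embeds in $B$ with $\frako_K \hookrightarrow \calO$.
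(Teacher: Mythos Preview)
Your proof is correct and is exactly the intended argument: the paper states this as an immediate corollary of \cref{prop:classmap1} with no further proof, and the one-line deduction $\sigma_N(\iota_*(t)) = \iota_*(t^{-1}) = \iota_*(t)$ from $t = t^{-1}$ is precisely what is meant.
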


Note that this gives an alternative proof of \cref{sigmaN-fixpt}.
Here is another result in the spirit of this corollary.

\begin{prop} \label{prop:classmap2}
Let $d = D$ if $4 \nmid D$ and $d = \frac D4$ if
$4 | D$.  Suppose $d | N$.  Then for $t \in \Cl(K)$, 
$\sigma_d(\iota_*(t)) = \iota_*(t)$, i.e., the image of $\iota_*$ lies in 
the set of fixed points of $\sigma_d$.
\end{prop}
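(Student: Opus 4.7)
The plan is to compute how $\sigma_d$ acts on the image $\iota_*(\Cl(K))$ by translating it into multiplication by a specific element of $\Cl(K)$, and then to show that element is trivial in $\Cl(K)$.

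First, I will verify that every $p \mid d$ ramifies in $K$. Indeed, if $4 \nmid D$ then $d = D$ is odd squarefree and each $p \mid d$ divides the discriminant $-D$; if $4 \mid D$, then $d = D/4$ and again each $p \mid d$ divides $D$ (the case $p = 2$ occurring precisely when $d \equiv 2 \pmod 4$). For each such $p$, fix a uniformizer $\pi_p \in K_p$, so that $N_{K_p/\Q_p}(\pi_p)$ has $p$-adic valuation $1$.

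Next, I claim $\iota(\pi_p)$ is a uniformizer of the unique maximal order $\calO_p$ in the division algebra $B_p$. Using the identity $v_{B_p}(x) = v_p(N_{B_p/\Q_p}(x))$ for the normalized valuation on $B_p$, we compute $v_{B_p}(\iota(\pi_p)) = v_p(N_{K_p/\Q_p}(\pi_p)) = 1$. Thus $\iota(\pi_p) = \varpi_p u_p$ with $u_p \in \calO_p^\times$; lifting to ideles, $\hat \varpi_p$ and $\iota(\hat \pi_p)$ differ by an element of $\hat \calO^\times$, where $\hat \pi_p$ denotes the idele with $\pi_p$ at $p$ and $1$ elsewhere. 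Consequently
\[
\sigma_p(\iota_*([t])) = [\iota(t)\hat \varpi_p] = [\iota(t\hat \pi_p)] = \iota_*\bigl([t][\frakp_p]\bigr),
\]
where $\frakp_p \subset \frako_K$ is the prime above $p$. Since the $\sigma_p$ for distinct $p$ commute, iterating gives $\sigma_d(\iota_*([t])) = \iota_*\bigl([t]\prod_{p \mid d}[\frakp_p]\bigr)$.

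It remains to show $\prod_{p \mid d}[\frakp_p] = 1$ in $\Cl(K)$, which I will establish via the stronger statement $\prod_{p \mid d}\frakp_p = (\sqrt{-d})$ in $\frako_K$. The element $\sqrt{-d}$ lies in $\frako_K$ in both cases: when $4 \nmid D$, one has $\sqrt{-D} = 2 \cdot \tfrac{1+\sqrt{-D}}{2} - 1 \in \Z[\tfrac{1+\sqrt{-D}}{2}] = \frako_K$; when $4 \mid D$, the fact that $-D = -4d$ is a fundamental discriminant forces $-d \equiv 2, 3 \pmod 4$, hence $\frako_K = \Z[\sqrt{-d}]$. Squarefreeness of $d$ then gives $(\sqrt{-d})^2 = (d) = \prod_{p\mid d}(p) = \prod_{p\mid d}\frakp_p^2$, and unique factorization of ideals yields $(\sqrt{-d}) = \prod_{p\mid d}\frakp_p$. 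The main subtlety is the case analysis matching the two definitions of $d$ to the correct ring of integers and confirming that every $p \mid d$ (including $p = 2$ when $d \equiv 2 \pmod 4$) ramifies in $K$; the remaining arithmetic is routine.
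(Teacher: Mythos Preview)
Your proof is correct and follows essentially the same idea as the paper's: both hinge on the element $\sqrt{-d}\in\frako_K$, whose image under $\iota$ is a uniformizer of $B_p$ for each $p\mid d$ and a unit elsewhere. The paper uses this global element in a single stroke (conjugating $t$ by $\alpha=\iota(\sqrt{-d})$, which commutes with $t$, and then absorbing $\alpha^{-1}$ into $B^\times$ and $\alpha$ into $\prod_{p\mid d}\varpi_{B_p}\hat\calO^\times$), whereas you decompose prime by prime and then reassemble via the principality of $(\sqrt{-d})=\prod_{p\mid d}\frakp_p$; as a bonus, your intermediate step $\sigma_p(\iota_*([t]))=\iota_*([t][\frakp_p])$ is exactly the content of the paper's subsequent \cref{prop:classmap3}.
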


\begin{proof}
Let $\alpha = \iota(\sqrt{-d})$.  Then $\alpha_p \in \calO_p^\times$
for $p \nmid d$ and $\alpha_p$ is a uniformizer $\varpi_{B_p}$ in $B_p$ for $p | d$.
Thus for $t \in \hat K^\times$, $t$ commutes with $\alpha \in K$, so
\[ B^\times t \hat \calO^\times = B^\times \alpha^{-1} t \alpha \hat \calO^\times
= B^\times t \prod_{p | d} \varpi_{B_p} \hat \calO^\times, \]
which proves the assertion.
\end{proof}

We remark that, when $d=p$, the conclusion that $\sigma_p$ has fixed points in this situation provides a different proof of part of \cref{lem:fixed-pt}.

Here is one more elementary result in the case that $K$ and $B$ have joint
ramification.

\begin{prop} \label{prop:classmap3}
 Suppose $p | \gcd(D,N)$.  Let $\calJ_p$ denote the class of
an ideal in $\frako_K$ of norm $p$.  Multiplication by $\calJ_p$ acts on $\Cl(K)$ 
as an involution, which we also denote by $\sigma_p$.  
Then $\iota_*: \Cl(K) \to \Cl(\calO)$ respects the action of $\sigma_p$, i.e.,
$\iota_*(\sigma_p(t)) = \iota_*(t \calJ_p) = \sigma_p(\iota_*(t))$.
\end{prop}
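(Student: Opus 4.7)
The plan is to observe that since $p$ divides both $D$ and $N$, the prime $p$ is ramified in both the quadratic field $K$ and the quaternion algebra $B$, and this joint ramification lets us choose a uniformizer in $B_p$ that literally comes from a uniformizer in $K_p$ via $\iota$. With such a compatible choice, the two incarnations of $\sigma_p$ are matched essentially tautologically.

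More precisely, first I would note that $p \mid D$ implies $K_p/\Q_p$ is a ramified quadratic extension with a unique prime $\frakp$ above $p$, and a uniformizer $\pi_p \in \frako_{K,p}$ represents the idele class corresponding to $\calJ_p \in \Cl(K)$. The fact that $\calJ_p^2 = (p)$ is principal shows that multiplication by $\calJ_p$ on $\Cl(K)$ is an involution, which I would denote $\sigma_p$. Next, since $p \mid N = \Delta$, $B_p$ is the quaternion division algebra over $\Q_p$, with valuation $v_{B_p}$ satisfying $v_{B_p}(p) = 2$. Because $\iota(\pi_p) \in \iota(\frako_{K,p}) \subset \calO_p$ and $v_{B_p}(\iota(\pi_p)) = [K_p:\Q_p]\, v_{K_p}(\pi_p)\cdot \frac{1}{e(B_p/K_p)} = 1$ (using that $K_p/\Q_p$ is ramified, so $\iota(\pi_p)$ already has valuation $1$ in $B_p$), the element $\iota(\pi_p)$ is a uniformizer of $\calO_p$. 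Hence we are free to take $\varpi_p = \iota(\pi_p)$ in the definition of $\sigma_p$ on $\Cl(\calO)$, making $\iota(\hat\pi_p) = \hat\varpi_p$ in $\hat B^\times$.

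Having engineered this compatibility, the verification is then a one-line idele computation: for $t \in \hat K^\times$,
\[
\iota_*(t \calJ_p) = [\iota(t\hat\pi_p)] = [\iota(t)\,\iota(\hat\pi_p)] = [\iota(t)\,\hat\varpi_p] = \sigma_p([\iota(t)]) = \sigma_p(\iota_*(t)),
\]
which is exactly the claim. I expect the only point needing care is the local valuation calculation showing $\iota(\pi_p)$ is a uniformizer of $B_p$; everything else is bookkeeping in the idelic description of the ideal class maps. Since $\sigma_p$ on $\Cl(\calO)$ is independent of the choice of uniformizer $\varpi_p$, there is no loss of generality in making the choice $\varpi_p = \iota(\pi_p)$, so the argument is complete.
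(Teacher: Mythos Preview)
Your proposal is correct and takes essentially the same approach as the paper. The paper's own proof is a single sentence---``This is clear, as a uniformizer for $K_p$ is also a uniformizer for $B_p$''---and your argument is precisely a detailed unpacking of this observation, together with the idelic bookkeeping that makes the conclusion explicit.
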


\begin{proof} This is clear, as a uniformizer for $K_p$ is also a uniformizer for
$B_p$.
\end{proof}

Other properties of the ideal class map require much deeper understanding
of arithmetic.  Notably, Michel uses subconvexity of $L$-functions
to show that the ideal class maps are equidistributed in a suitable sense
(for fixed $B$ and varying $K$, as $D \to \infty$) \cite[Theorem 10]{michel}.
One application of equidistribution of the ideal class map is to the existence
of nonvanishing twisted central $L$-values \cite{michel-venkatesh}---see also
\cref{rem614}.

\begin{rem} Here is an unexpected (at least to us) 
consequence of equidistribution combined with
\cref{cor:fixed-pt-1cpg}.  
The latter says that if $K \subset B$ has one class per genus, i.e.,
$\Cl(K)$ has exponent $\le 2$, then the image of the class map $\iota_*$
is contained in the fixed points of $\sigma_N$.  However, assuming 
$\dim S_2^{-_N}(N) \ne 0$ (e.g., if $N > 71$ is prime \cite[Proposition 4.2]{me:dim}), 
then not all elements of 
$\Cl(\calO)$ are fixed by $\sigma_N$.  Thus the ideal class map is
not surjective.  But equidistribution of this map means that for $D$ sufficiently
large, $\Cl(K) \to \Cl(\calO)$ is surjective.  Consequently, there can only
be finitely many one-class-per-genus fields $K$ embedding in $B$.
Even though the much stronger result that there are at most 66 imaginary quadratic
one-class-per-genus fields is known \cite{weinberger}, 
 this argument is interesting, as to our knowledge it is rather different
than existing proofs of finiteness results for certain class group structures.
Of course, deep facts about $L$-functions (subconvexity)  still underlie
this argument.
\end{rem}

\subsection{Toric periods}
Let $\chi$ be a character of $\Cl(K)$.
We fix Haar measures on $\hat K^\times$ and $\hat \Q^\times$ such that the
quotient $K^\times \hat \Q^\times \bs \hat K^\times / \hat \frako_K^\times \simeq \Cl(K)$ has measure $h_{K}$.

Define the period integral $P_{K,\chi} : M(\calO) \to \C$ by
\[ P_{K,\chi}(\phi) = \int_{K^\times \hat \Q^\times \bs \hat K^\times} 
\phi(t) \chi^{-1}(t) \, dt = \sum_{t \in \Cl(K)} \phi(\iota_*(t)) \chi^{-1}(t). \]
In the case that $\chi$ is trivial 
we simply denote $P_{K,\chi}$ by $P_K$. 

For $\pi$ an automorphic representation occurring in $S(\calO)$ with
 corresponding cuspidal automorphic representation $\pi'$of $\GL_2(\A_\Q)$, and
$\phi \in \pi^{\hat \calO^\times}$, Waldspurger \cite{wald} proved a formula of the
form
\begin{equation}
 |P_{K,\chi}(\phi)|^2 = c(\phi, K, \chi) L(\tfrac 12, \pi'_K \otimes \chi).
\end{equation}
Here $\pi'_K$ denotes the base change of $\pi'$ to $\GL_2(\A_K)$. 
If $\chi$ is trivial, then $L(s, \pi'_K) = L(s, \pi')L(s, \pi' \otimes \eta_{D})$,
where $\eta_D$ is the quadratic Dirichlet associated to $K/\Q$.

For simplicity in what follows, for an eigenform $\phi \in \pi \cap S(\calO)$ as above,
we write $L(s, \phi) =  L(s, \pi')$, $L(s, \phi_K \otimes \chi) = L(s, \pi'_K \otimes \chi)$,
etc., and similarly for $\eps$-factors.

In particular, $P_{K,\chi}(\phi) \ne 0$ implies the twisted central
$L$-value $L(\frac 12, \phi_K \otimes \chi) \ne 0$.  The converse is not true
as $c(\phi, K, \chi)$ may be $0$.  Indeed, by Tunnell \cite{tunnell}, 
$P_{K,\chi}$ is identically $0$
on $\pi$ (the period is well defined for any smooth $\phi \in \pi$, 
not just $\phi \in S(\calO)$)
unless the local $\eps$-factors satisfy $\eps_v(\frac 12, \phi_K \otimes \chi) = +1$ 
if and only if $B_v$ is ramified.  Note that this local $\eps$-factor
condition forces the global root number of $\phi_K \otimes \chi$ to be $+1$.

Precisely, the following holds.  
For an eigenform $\phi \in M(\calO)$ and $d | N$, let
$\eps_d(\phi)$ be the $T_d$-eigenvalue of $\phi$.
For $p | D$, let $\varpi_{K_p}$ denote a uniformizer for $K_p$.

\begin{prop} \label{prop:per-van}
 Let  $\phi \in S(\calO)$ be an eigenform and $\chi$ be a character of $\Cl(K)$.  
 Then $P_{K,\chi}(\phi) = 0$ unless:
\begin{enumerate}[(i)]
\item $\eps_N(\phi) = +1$ if $\chi$ is quadratic; and

\item $\eps_p(\phi) = \chi_p(\varpi_{K_p})$ for all $p | \gcd(N,D)$.
\end{enumerate}
Moreover, assuming (ii), $L(\frac 12, \phi_K \otimes \chi) \ne 0 \iff P_{K,\chi}(\phi) \ne 0$.
\end{prop}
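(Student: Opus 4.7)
The strategy is to combine Waldspurger's identity
\[ |P_{K,\chi}(\phi)|^2 = c(\phi,K,\chi)\, L(\tfrac12, \pi'_K \otimes \chi) \]
with Tunnell's local dichotomy, which asserts that $c(\phi,K,\chi) \ne 0$ if and only if, at every place $v$,
\[ \eps_v(\tfrac12, \pi'_{K,v}\otimes\chi_v)\,\chi_v(-1)\eta_{D,v}(-1) = \eps(B_v), \]
with $\eps(B_v) = -1$ when $B_v$ is division and $+1$ otherwise. The plan is to translate this local criterion into explicit conditions on $\phi$ and $\chi$ at the primes dividing $N$, then use the global root number to extract (i).

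At each archimedean place and at a finite $p \nmid N$, the algebra $B_v$ is split and $\pi'_v$ is either the weight-$2$ holomorphic discrete series or an unramified principal series, and Tunnell's condition is automatic. At a prime $p | N$ with $p \nmid D$, $K_p/\Q_p$ is the unramified quadratic extension, $\chi_p$ is unramified, and $\pi'_p$ is (twisted) Steinberg with Atkin--Lehner sign $\eps_p(\phi)$; a direct computation of $\eps_p(\tfrac12, \pi'_{K,p}\otimes\chi_p)$ for Steinberg base-changed to an unramified quadratic shows Tunnell's condition is automatically satisfied. Finally, at $p \mid \gcd(N,D)$, $K_p/\Q_p$ is ramified quadratic and $\chi_p$ is trivial on $\frako_{K_p}^\times$; computing the $\eps$-factor of Steinberg base-changed to a ramified quadratic and then twisted by $\chi_p$, Tunnell's condition reduces to exactly $\eps_p(\phi) = \chi_p(\varpi_{K_p})$, which is condition (ii).

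Under (ii) all local Tunnell conditions hold, so $c(\phi,K,\chi) \ne 0$, and Waldspurger's identity collapses to the biconditional $P_{K,\chi}(\phi) \ne 0 \iff L(\tfrac12, \phi_K \otimes \chi) \ne 0$, giving the ``moreover'' clause. For condition (i), when $\chi$ is quadratic the representation $\pi'_K \otimes \chi$ is self-dual and $L(\tfrac12, \phi_K \otimes \chi)$ must vanish whenever the global root number is $-1$. Multiplying together the local $\eps$-factors computed in the previous paragraph (and using that $\prod_v \chi_v(-1)\eta_{D,v}(-1) = 1$ for $\chi$ quadratic), the global root number of $\pi'_K\otimes\chi$ reduces to $\eps_N(\phi)$; so $\eps_N(\phi) = -1$ forces $P_{K,\chi}(\phi) = 0$, which is (i).

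The main obstacle is the local $\eps$-factor bookkeeping: for a Steinberg representation base-changed to a ramified quadratic extension and then twisted by an unramified character of $K_p$, one must track conductor shifts together with the sign relation between the Atkin--Lehner eigenvalue $\eps_p(\phi)$ and the local root number $\eps_p(\tfrac12, \pi'_p)$ (which differ by a fixed sign in weight $2$, and further by a $\chi_p(\varpi_{K_p})$ factor after twisting). Assembling these into the global root number in the quadratic-$\chi$ case requires the parities coming from the various $\chi_v(-1)$ and $\eta_{D,v}(-1)$ to cancel out across places, which is precisely where condition (ii) gets promoted from a local to a global statement.
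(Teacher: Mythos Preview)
Your approach is the ``standard'' indirect one via Waldspurger and Tunnell--Saito, whereas the paper deliberately avoids this and gives direct arguments by manipulating the period sum using explicit properties of the ideal class map $\iota_*$. Specifically, for (i) the paper uses \cref{prop:classmap1} ($\iota_*(t^{-1}) = \sigma_N(\iota_*(t))$) to show in two lines that $\eps_N(\phi)P_{K,\chi}(\phi) = P_{K,\chi}(\phi)$ whenever $\chi = \chi^{-1}$; for (ii) it uses \cref{prop:classmap3} analogously. For the ``moreover'' clause the paper checks the local $\Hom$ spaces directly rather than through $\eps$-factors, but this is equivalent to your Tunnell computation. The paper's \cref{rem610} explicitly contrasts its direct argument with the standard approach you outline.

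There is, however, a genuine gap in your argument for (i). Your claim that ``the global root number of $\pi'_K\otimes\chi$ reduces to $\eps_N(\phi)$'' is not correct: once (ii) holds, Tunnell's condition is satisfied at every place, and then multiplying the local identities $\eps_v \cdot \chi_v(-1)\eta_{D,v}(-1) = \eps(B_v)$ over all $v$ gives global root number $+1$, not $\eps_N(\phi)$. (Concretely, for $\chi=1$ and $(N,D)=1$ one checks $\eps(\pi') = \eps(\pi'\otimes\eta_D) = \eps_N(\phi)$, so their product is $+1$.) What actually forces $L(\tfrac12,\phi_K\otimes\chi)=0$ when $\eps_N(\phi)=-1$ and $\chi$ is quadratic is that the degree-$4$ $L$-function \emph{factors} as a product of two degree-$2$ $L$-functions over $\Q$ (via the genus-character decomposition of $\chi$), and it is the root numbers of these \emph{factors} that must each equal $\eps_N(\phi)$. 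The paper's \cref{rem610} flags exactly this computation as ``less obvious'' and not in the literature for honestly quadratic $\chi$. So your route for (i) can likely be made to work, but not via the global root number of $\pi'_K\otimes\chi$ itself; you would need to carry out the factored root-number computation, which you have not done. The paper's direct argument via \cref{prop:classmap1} bypasses this difficulty entirely.
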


Condition (ii) corresponds to Tunnell's $\eps$-factor criterion.
See \cref{rem610} about (i).

\begin{proof}
First suppose $\chi$ is quadratic.  Then by \cref{prop:classmap1},
\[ \eps_N(\phi) P_{K,\chi}(\phi) = 
P_{K,\chi}(T_N(\phi)) = \sum \phi(\sigma_N(\iota_*(t))) \chi^{-1}(t) = 
\sum \phi(\iota_*(t)) \chi(t) = P_{K,\chi}(\phi). \]
In particular, $P_{K,\chi}(\phi) = 0$ unless (i) holds.

Similarly, by \cref{prop:classmap3}, one sees that 
$\eps_p(\phi) P_{K,\chi}(\phi) = \chi_p(\varpi_{K_p})P_{K,\chi}(\phi)$
for $p | \gcd(N,D)$. 
Hence $P_{K,\chi}(\phi) = 0$ unless (ii) holds.

Now we explain the final part. 
The above integral definition of $P_{K,\chi}(\psi)$ makes sense for smooth elements in 
$\psi  \in \pi$, so we may view $P_{K,\chi}$ as an element
of $\Hom_{\A_K^\times}(\pi, \chi)$.  Via the factorization of $\pi = \otimes \pi_v$,
this global Hom space can only be nonzero if the local Hom spaces
$\Hom_{K_v^\times}(\pi_v, \chi_v)$ are all nonzero.  
It follows from Jacquet's relative trace formula approach to Waldspurger's theorem
that $L(\frac 12, \pi_K \otimes \chi) \ne 0$ if and only if $P_{K,\chi}$ is not
identically zero on $\pi$ (the main result of \cite{jacquet}, with a gap fixed in \cite{jacquet-chen}---see p.\ 41 of the latter paper).  From the work of Gross and Prasad on
test vectors, $P_{K,\chi}$ is not identically 0 on $\pi$ if and only if $P_{K,\chi}(\phi) \ne 0$
if $\phi$ is a nonzero element of $\pi^{\hat \calO^\times}$ 
\cite[Proposition 2.6]{gross-prasad}.

It remains to check that (ii) guarantees each local Hom space is nonzero.
While one could use the $\eps$-factor criterion in \cite{tunnell} to check this, it
is simple to give a direct argument.
The local archimedean Hom space is nonzero since $\pi_\infty$ and $\chi_\infty$
are trivial.  For finite $p \nmid N$, $\pi_p$ is an unramified principal series and thus
the local Hom space $\Hom_{K_p^\times}(\pi_p, \mu_p) \ne 0$ for any character $\mu_p$
of $K_p$ with trivial central character.  

Now suppose $p | N$.  Then $\pi_p$ is 1-dimensional, either trivial if $\eps_p(\phi) = 1$
or $\eta_p \circ \det$ if $\eps_p(\phi) = -1$, where $\eta_p$ is the unramified quadratic
character of $\Q_p^\times$.  In either case, the local Hom space 
$\Hom_{K_p^\times}(\pi_p, \chi_p)$ being nonzero simply means that 
$\pi_p |_{K_p^\times} = \chi_p$.  Note $\pi_p |_{K_p^\times}$ and $\chi_p$ are
both characters of $K_p^\times/\frako_{K_p}^\times \Q_p^\times$.
If $K_p/\Q_p$ is unramified, then this quotient is trivial and 
$\pi_p |_{K_p^\times} = \chi_p$ holds automatically.  
If $K_p/\Q_p$ is ramified, then the quotient has order 2 and is generated
by a uniformizer $\varpi_{K_p}$, in which case (ii) is equivalent to both characters
agreeing on $K_p^\times$.
\end{proof}

\begin{rem} \label{rem610}
 While this type of result is clear to experts and has often been used,
to our knowledge this is the first direct proof for why the periods
must vanish when (i) or (ii) fails to hold.  Rather the standard argument for vanishing
of periods in such situations is either because the local Hom space is zero or
that a root number condition forces an $L$-value to vanish.  For instance,
regarding (i), when $\chi$
is quadratic, the automorphic induction to $\GL_2(\A_\Q)$ is a nontrivial isobaric sum
and so $L(s, \phi_K \otimes \chi)$ factors into 2 degree 2 $L$-functions over $\Q$.  
When $\chi = 1$, we see $L(s,\phi_K) = L(s, \phi) L(s, \phi \otimes \eta_{K/\Q})$
where $\eta_{K/\Q}$ is the quadratic character over $\Q$ associated to $K/\Q$.  
From this it is not hard to see that the two $L$-functions on the right have root number $+1$
if and only if $\phi$ does assuming (ii).   When $\chi$ is nontrivial quadratic, it is 
less obvious that the root numbers of the factored $L$-functions match the root 
number of $L(s,\phi)$.  We expect one can check that this is so, but have not
attempted to do so and have not seen this in the literature.
Accordingly, while the $\chi=1$ case of (i) is well known, we are unsure whether
(ii) was previously known for $\chi$ of order 2.
\end{rem}

In the following results, recall that we assume $\frako_K \subset \calO$.

\begin{thm} \label{thm:1cpg}
Suppose $K$ has one class per genus. 
Let $\chi$ be a character of ${\Cl}(K)$, $\eps$ be a sign pattern for $N$, and $\phi \in S^\eps(\calO)$ be an eigenform.
Assume one of the following:
\begin{enumerate}
\item $h_K = 1$; or

\item for each $p | D$, we have that
$p | N$ and  $\chi_p(\varpi_{K_p}) = \eps_p$.
\end{enumerate}
Then $L(\frac 12, \phi_K \otimes \chi) \ne 0 \iff \phi(1) \ne 0$.
\end{thm}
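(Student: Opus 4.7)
The plan is to show that the period simplifies to $P_{K,\chi}(\phi) = h_K \phi(1)$ in both cases, and then to invoke \cref{prop:per-van} to translate this into a statement about $L(\tfrac{1}{2}, \phi_K \otimes \chi)$. The key input is that, under the one-class-per-genus hypothesis, every element of $\Cl(K)$ is represented by a product of classes $\calJ_p$ of primes $p | D$, and the ideal class map respects these via the involutions $\sigma_p$ by \cref{prop:classmap3}.

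Case (1) is essentially trivial: when $h_K = 1$, the sum defining $P_{K,\chi}$ has only one term, so $P_{K,\chi}(\phi) = \phi(\iota_*(1)) = \phi(1)$, and moreover $\chi_p(\varpi_{K_p}) = \chi(\calJ_p) = 1$ for all $p | D$ since each $\calJ_p$ is principal in $\Cl(K)$.

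For case (2), genus theory implies that $\Cl(K)$, which has exponent $\le 2$, is generated by the classes $\calJ_p$ for $p | D$. So for each $t \in \Cl(K)$, choose a subset $S_t \subseteq \{p : p | D\}$ with $t = \prod_{p \in S_t} \calJ_p$. By iterating \cref{prop:classmap3},
\[
\iota_*(t) = \Big(\prod_{p \in S_t} \sigma_p\Big)(\iota_*(1)),
\]
so $\phi(\iota_*(t)) = \big(\prod_{p \in S_t} \eps_p\big) \phi(1)$ since $\phi \in S^{\eps}(\calO)$. Meanwhile $\chi(t) = \prod_{p \in S_t} \chi_p(\varpi_{K_p}) = \prod_{p \in S_t} \eps_p$ by the hypothesis in (2), and $\chi^{-1}(t) = \chi(t)$ as $\chi$ is quadratic. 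The signs cancel, each summand of $P_{K,\chi}(\phi)$ equals $\phi(1)$, and summing over $t$ yields $P_{K,\chi}(\phi) = h_K \phi(1)$.

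Finally, I verify the hypotheses of \cref{prop:per-van} and conclude. Hypothesis (ii), that $\eps_p = \chi_p(\varpi_{K_p})$ for all $p | \gcd(N,D)$, holds in (2) by assumption and in (1) reduces to $\eps_p = 1$ for such $p$; if this fails at some $p$, the same argument applied to $\calJ_p \sim 1$ shows $\sigma_p$ fixes $\iota_*(1)$, so $\phi(1) = 0$. Hypothesis (i) requires $\eps_N = +1$ when $\chi$ is quadratic (which it always is, since $\chi^2 = 1$); if instead $\eps_N = -1$, then \cref{cor:fixed-pt-1cpg} forces $\iota_*(1)$ to be a fixed point of $\sigma_N$, again giving $\phi(1) = 0$, and the global root number of $L(s, \phi_K \otimes \chi)$ is $-1$, forcing $L(\tfrac{1}{2}, \phi_K \otimes \chi) = 0$. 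When (i) and (ii) both hold, \cref{prop:per-van} gives $L \ne 0 \iff P \ne 0 \iff \phi(1) \ne 0$. The main obstacle I anticipate is in handling the cases where a hypothesis of \cref{prop:per-van} fails: on the $\phi(1) = 0$ side the argument via $\sigma_p$ and $\sigma_N$ fixing $\iota_*(1)$ is clean, but matching the vanishing of $L(\tfrac{1}{2}, \phi_K \otimes \chi)$ on the other side requires a local root-number analysis at the offending primes to show that the global root number flips to $-1$.
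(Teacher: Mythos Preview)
Your argument is essentially the paper's: compute $P_{K,\chi}(\phi)=h_K\,\phi(1)$ using \cref{prop:classmap3} and the one-class-per-genus structure of $\Cl(K)$, then invoke \cref{prop:per-van}. The paper does exactly this, writing in case~(1) simply ``$P_K(\phi)=\phi(x_0)=\phi(1)$'' and in case~(2) the same product-of-signs computation you give.

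Two remarks on the edge-case analysis you add. First, your separate treatment of hypothesis~(i) of \cref{prop:per-van} is unnecessary: the ``Moreover'' clause there assumes only~(ii), so once~(ii) holds the equivalence $L\ne 0\iff P\ne 0$ is available regardless of $\eps_N$. Second, your concern about hypothesis~(ii) failing in case~(1) is legitimate and is something the paper glosses over: it simply asserts ``by the previous result, it suffices\ldots'' without checking~(ii) when $h_K=1$. Your observation that $\calJ_p$ trivial forces $\sigma_p(\iota_*(1))=\iota_*(1)$ and hence $\phi(1)=0$ when $\eps_p=-1$ handles one direction; for the other direction (showing $L(\tfrac12,\phi_K)=0$ in this situation) the root-number flip you anticipate is indeed what is needed, and it does hold---at a prime $p\mid\gcd(N,D)$ with $\eps_p\ne\chi_p(\varpi_{K_p})$, Tunnell's dichotomy places the nonzero Hom space on the split algebra rather than on $B_p$, which changes the local $\eps$-factor sign and forces the global root number of $\pi'_K\otimes\chi$ to be $-1$. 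The paper does not spell this out.
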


\begin{proof} By the previous result,
it suffices to show that $P_{K,\chi}(\phi)$ is a non-zero multiple of $\phi(1)$.

Case (1) is clear, as then $x_0 := \iota_*([\frako_K])$ is a fixed point of 
$\sigma_N$ and thus $P_K(\phi) = \phi(x_0) = \phi(1)$.  (Necessarily $\chi=1$ here.)

Now assume (2).
For $p | D$, let $\hat \varpi_{K_p} \in \hat K$ denote an element whose local components 
$(\hat \varpi_{K_p})_v \in \frako_{K,v}^\times$ are integral units for all finite primes 
$v$ of $K$ not lying above $p$, and whose local component is a uniformizer at 
the prime above $p$.
Then $\Cl(K)$ is generated by the $\hat \varpi_{K_p}$'s, i.e., the ideals of
absolute norm $p$, for $p | D$.  Thus for any $t \in \Cl(K)$, we may choose a
representative in $\hat K^\times$ of the form $\prod \hat \varpi_{K_p}$ for some subset
of $\{ p : p | D \}$.  By \cref{prop:classmap3}, we see that $\phi(t) = \phi(\prod \hat \varpi_{K_p})
= \prod \eps_p(\phi) \cdot \phi(1) = \chi(t) \phi(1)$.  
Since $\Cl(K)$ has exponent 2, $\chi$ must be quadratic.  Thus we deduce 
$P_{K,\chi}(\phi) = h_K \phi(1)$.
\end{proof}

\begin{cor} Fix $\calO \subset B$, a sign pattern $\eps$ for $N$, 
and let $Z$ be the collection of pairs $(K,\chi)$
satisfying the hypotheses of the previous theorem.  Then, for any fixed eigenform 
$\phi \in S^\eps(\calO)$, the twisted central values $L(\frac 12, \phi_K \otimes \chi)$
for $(K, \chi) \in Z$ are either all zero or all nonzero.
\end{cor}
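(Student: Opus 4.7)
The plan is to deduce this corollary directly from \cref{thm:1cpg}. That theorem, applied to any pair $(K,\chi) \in Z$ and our fixed eigenform $\phi \in S^\eps(\calO)$, yields the equivalence
\[ L(\tfrac 12, \phi_K \otimes \chi) \ne 0 \iff \phi(1) \ne 0, \]
where $\phi(1)$ denotes the value of $\phi$ on the trivial class $[\calO] \in \Cl(\calO)$. This value is identified in the proof of \cref{thm:1cpg} as $\phi(\iota_*([\frako_K]))$ in case (1), and as $\phi(1)$ up to a nonzero scalar $\chi(t)$ in case (2), after using \cref{prop:classmap3} to reduce each period term $\phi(\iota_*(t))$ to $\phi(1)$.

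The crux of the corollary is the observation that the right-hand side $\phi(1) \ne 0$ depends only on the fixed eigenform $\phi$, and not at all on the choice of $(K,\chi) \in Z$. Thus for any two pairs $(K_1,\chi_1), (K_2,\chi_2) \in Z$, both nonvanishing statements $L(\frac 12, \phi_{K_i} \otimes \chi_i) \ne 0$ are equivalent to the same single condition $\phi(1) \ne 0$. Consequently, the nonvanishing/vanishing status is uniform over all elements of $Z$: if $\phi(1) \ne 0$, every $L(\frac 12, \phi_K \otimes \chi)$ for $(K,\chi) \in Z$ is nonzero; and if $\phi(1) = 0$, they are all zero.

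There is no serious technical obstacle, as the substantive work has already been carried out in \cref{thm:1cpg}. The only point worth confirming is that the quantity ``$\phi(1)$'' is unambiguously defined and does not depend on the choice of embedding $\iota : K \hookrightarrow \calO$ (which may vary with $K$): this is immediate since $1 \in \hat B^\times$ represents the trivial right $\calO$-ideal class irrespective of any choice of embedding of a quadratic field. Thus $\phi(1)$ is a single complex number attached to $\phi$ alone, and the corollary follows.
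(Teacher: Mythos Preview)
Your proposal is correct and is exactly the intended argument: the paper states this corollary without proof because it is immediate from \cref{thm:1cpg}, and your deduction---that each nonvanishing condition is equivalent to the single $(K,\chi)$-independent condition $\phi(1)\ne 0$---is precisely that immediate step. Your additional remark clarifying why $\phi(1)$ is independent of the embedding $\iota$ is a helpful elaboration but not something the paper spells out.
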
 

Note that one can compute examples of $B$ such that $|Z| > 1$, so that
this corollary has some content, however we do not know if there are infintely
many such $Z$.

\begin{thm} \label{thm:existchi}
Suppose  $\phi \in S(\calO)$ is
an eigenform such that $\phi(1) \ne 0$.  
Then there exists a character $\chi$ of $\Cl(K)$ such that
$L(\frac 12, \phi_K \otimes \chi) \ne 0$.
\end{thm}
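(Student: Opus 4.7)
The plan is to apply Fourier analysis on the finite abelian group $\Cl(K)$. By Waldspurger's formula, encoded for us in \cref{prop:per-van}, the implication
\[ P_{K,\chi}(\phi) \ne 0 \implies L(\tfrac 12, \phi_K \otimes \chi) \ne 0 \]
holds unconditionally (this is just the easy direction of $|P_{K,\chi}(\phi)|^2 = c(\phi,K,\chi) L(\tfrac 12, \phi_K \otimes \chi)$). Thus it suffices to exhibit a single character $\chi$ of $\Cl(K)$ for which the toric period $P_{K,\chi}(\phi)$ is nonzero.

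First I would view the sum
\[ P_{K,\chi}(\phi) = \sum_{t \in \Cl(K)} \phi(\iota_*(t)) \chi^{-1}(t) \]
as the discrete Fourier transform of the function $F : \Cl(K) \to \C$ defined by $F(t) = \phi(\iota_*(t))$. Standard orthogonality of characters on the finite abelian group $\Cl(K)$ then gives the inverse transform
\[ h_K \, F(t_0) = \sum_\chi \chi(t_0) P_{K,\chi}(\phi), \]
where the outer sum ranges over all $h_K$ characters of $\Cl(K)$.

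Next I would specialize to $t_0 = [\frako_K]$. Since the embedding $\iota$ was chosen so that $\iota(\frako_K) \subset \calO$, the class $\iota_*([\frako_K])$ is the trivial class in $\Cl(\calO)$, which we write as $1$. Therefore
\[ h_K \, \phi(1) = \sum_\chi \chi([\frako_K]) P_{K,\chi}(\phi), \]
and the left-hand side is nonzero by hypothesis. Hence at least one term on the right must be nonzero, yielding a character $\chi$ with $P_{K,\chi}(\phi) \ne 0$, and therefore with $L(\tfrac 12, \phi_K \otimes \chi) \ne 0$ by the above.

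There is no real obstacle here; once the identification $\phi(\iota_*([\frako_K])) = \phi(1)$ is made, the argument is a one-line application of character orthogonality together with the easy direction of Waldspurger's formula. The only mild subtlety worth noting is that we never need to verify the local sign condition (ii) of \cref{prop:per-van}: if that condition fails for a given $\chi$ then $P_{K,\chi}(\phi)$ is automatically zero and that term drops out of the sum, which only strengthens the conclusion that some other $\chi$ contributes nonvanishingly.
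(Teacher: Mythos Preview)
Your proof is correct and is essentially identical to the paper's own argument: both use orthogonality of characters to obtain $\sum_{\chi} P_{K,\chi}(\phi) = h_K \phi(1)$ (your $\chi([\frako_K])$ factors are all $1$), and then conclude that some $P_{K,\chi}(\phi)$ is nonzero. Your write-up simply makes explicit the Fourier-analytic viewpoint and the use of the easy direction of Waldspurger's formula, which the paper leaves implicit.
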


\begin{proof} 
By orthogonality of characters, we have 
$\sum_{\chi \in \widehat{\Cl}(K)} P_{K,\chi}(\phi) = h_K \phi(1)$.
Hence some $P_{K,\chi}(\phi) \ne 0$ if $\phi(1)$ is.
\end{proof}

When $h_K=1$, this theorem overlaps with the previous one.

\begin{rem} \label{rem614}
The above argument similarly works (via a different linear combination of
characters) if $\phi$ is nonzero on some element in
the image of $\Cl(K)$.  This fact was used in \cite{michel-venkatesh} to
make the same conclusion for any eigenform $\phi$ if $D$ is sufficiently large
(and in fact get a lower bound on how many $\chi$ 
satisfy $L(\frac 12, \phi_K \otimes \chi) \ne 0$).
Our point is that the nonvanishing of $\phi$ at a certain value
also implies the nonvanishing of twisted $L$-values for small $D$.
\end{rem}

We also remark that \cref{prop:classmap1} implies 
$P_{K,\chi^{-1}}(\phi) = P_{K,\chi}(\sigma_N(\phi)) = \eps_N(\phi) P_{K,\chi}(\phi)$.
This corresponds to the fact that 
$L(\frac 12, \phi_K \otimes \chi) = \eps(\frac 12, \phi_K \otimes \chi)
L(\frac 12, \phi_K \otimes \chi^{-1})$, because the representations
$\pi_K \otimes \chi$ and $\pi_K \otimes \chi^{-1}$ are contragredient.  
In particular, another statement one can make about non-vanishing of twists is that
$L(\frac 12, \phi_K \otimes \chi) \ne 0$ if and only if $L(\frac 12, \phi_K \otimes \chi^{-1}) \ne 0$.

\begin{example} We continue the example $N=154$ from 
\cref{ex:154} and the notation therein.  
Here the fields $K=\Q(\sqrt{-D})$ with class number
1 which embed in $B$ are those with $D \in \{ 4, 11, 67, 163 \}$.
Since $\phi_1, \phi_2$ are zero-free,
$L(\frac 12, \phi_{K}) = L(\frac 12, \phi) L(\frac 12, \phi \otimes \eta_D)
\ne 0$ for $\phi = \phi_1, \phi_2$ and each such $K$.  
Since $\phi_5$ has root number $-1$,
each $L(\frac 12, \phi_{5,K}) = 0$.  

Now $\phi_3$ and $\phi_4$ correspond to weight 2 rational newforms
in $S_2(154)$ with root number $+1$.  All elliptic curves of conductor 154
and root number $+1$ are rank 0, so $L(\frac 12, \phi_3)$ and $L(\frac 12, \phi_4)$
are nonzero.  Hence for $\phi \in \{ \phi_3, \phi_4 \}$, we have
$L(\frac 12, \phi \otimes \eta_D) \ne 0$ if and only if $P_K(\phi) \ne 0$.
Recall that $\sigma_2$ fixes the ideal classes in $X_3$.  
Thus \cref{prop:classmap2} implies $\Z[i]$ must embed into the
left order $\calO_\ell(x_5) = \calO_\ell(x_6)$ associated to $X_3$.  (Alternatively
one can check in Magma that $\calO_\ell(x_5)$ has 4 units and apply \cref{rem:61}.)
Similarly, one sees that $\Z[\frac {1+\sqrt{-11}}2]$ embeds in the left order 
associated to $X_1$.
Consequently, we see $L(\frac 12, \phi_3 \otimes \eta_4) \ne 0$,
$L(\frac 12, \phi_3 \otimes \eta_{11}) = 0$, 
$L(\frac 12, \phi_4 \otimes \eta_4) = 0$, and
$L(\frac 12, \phi_4 \otimes \eta_{11}) \ne 0$. 
\end{example}

We find this example interesting
because here it is \emph{trivial} zeroes that force the elliptic curves associated
to $\phi_3 \otimes \eta_{11}$ and $\phi_4 \otimes \eta_4$ to have rank at
least 2.   However, we expect that this phenomenon of
trivial zeroes forcing analytic rank $\ge 2$ is quite rare---possibly it only
happens finitely often.

%
%

\begin{bibdiv}
\begin{biblist}

\bib{blomer-michel}{article}{
   author={Blomer, Valentin},
   author={Michel, Philippe},
   title={Hybrid bounds for automorphic forms on ellipsoids over number
   fields},
   journal={J. Inst. Math. Jussieu},
   volume={12},
   date={2013},
   number={4},
   pages={727--758},
   issn={1474-7480},
}

\bib{magma}{article}{
   author={Bosma, Wieb},
   author={Cannon, John},
   author={Playoust, Catherine},
   title={The Magma algebra system. I. The user language},
   note={Computational algebra and number theory (London, 1993)},
   journal={J. Symbolic Comput.},
   volume={24},
   date={1997},
   number={3-4},
   pages={235--265},
   issn={0747-7171},
}

\bib{dembele-voight}{article}{
   author={Demb\'{e}l\'{e}, Lassina},
   author={Voight, John},
   title={Explicit methods for Hilbert modular forms},
   conference={
      title={Elliptic curves, Hilbert modular forms and Galois deformations},
   },
   book={
      series={Adv. Courses Math. CRM Barcelona},
      publisher={Birkh\"{a}user/Springer, Basel},
   },
   date={2013},
   pages={135--198},
}

\bib{DF}{article}{
   author={Diaconis, Persi},
   author={Freedman, David},
   title={A dozen de Finetti-style results in search of a theory},
   language={English, with French summary},
   journal={Ann. Inst. H. Poincar\'{e} Probab. Statist.},
   volume={23},
   date={1987},
   number={2, suppl.},
   pages={397--423},
   issn={0246-0203},
}

\bib{gross}{article}{
   author={Gross, Benedict H.},
   title={Heights and the special values of $L$-series},
   conference={
      title={Number theory},
      address={Montreal, Que.},
      date={1985},
   },
   book={
      series={CMS Conf. Proc.},
      volume={7},
      publisher={Amer. Math. Soc., Providence, RI},
   },
   date={1987},
   pages={115--187},
}

\bib{gross-prasad}{article}{
   author={Gross, Benedict H.},
   author={Prasad, Dipendra},
   title={Test vectors for linear forms},
   journal={Math. Ann.},
   volume={291},
   date={1991},
   number={2},
   pages={343--355},
   issn={0025-5831},
}

\bib{HPS}{article}{
   author={Hijikata, H.},
   author={Pizer, A.},
   author={Shemanske, T.},
   title={Orders in quaternion algebras},
   journal={J. Reine Angew. Math.},
   volume={394},
   date={1989},
   pages={59--106},
   issn={0075-4102},
}

\bib{ILS}{article}{
   author={Iwaniec, Henryk},
   author={Luo, Wenzhi},
   author={Sarnak, Peter},
   title={Low lying zeros of families of $L$-functions},
   journal={Inst. Hautes \'{E}tudes Sci. Publ. Math.},
   number={91},
   date={2000},
   pages={55--131 (2001)},
   issn={0073-8301},
}

\bib{jacquet}{article}{
   author={Jacquet, Herv\'{e}},
   title={Sur un r\'{e}sultat de Waldspurger. II},
   language={French},
   journal={Compositio Math.},
   volume={63},
   date={1987},
   number={3},
   pages={315--389},
   issn={0010-437X},
}

\bib{jacquet-chen}{article}{
   author={Jacquet, Herv\'{e}},
   author={Chen, Nan},
   title={Positivity of quadratic base change $L$-functions},
   language={English, with English and French summaries},
   journal={Bull. Soc. Math. France},
   volume={129},
   date={2001},
   number={1},
   pages={33--90},
   issn={0037-9484},
}

\bib{me:cong}{article}{
   author={Martin, Kimball},
   title={The Jacquet-Langlands correspondence, Eisenstein congruences, and
   integral $L$-values in weight 2},
   journal={Math. Res. Lett.},
   volume={24},
   date={2017},
   number={6},
   pages={1775--1795},
   issn={1073-2780},
   note={Corrected version available at \url{http://arxiv.org/abs/1601.03284}}
}

\bib{me:dim}{article}{
   author={Martin, Kimball},
   title={Refined dimensions of cusp forms, and equidistribution and bias of
   signs},
   journal={J. Number Theory},
   volume={188},
   date={2018},
   pages={1--17},
   issn={0022-314X},
}

\bib{me:cong2}{article}{
   author={Martin, Kimball},
   title={Congruences for modular forms mod 2 and quaternionic $S$-ideal
   classes},
   journal={Canad. J. Math.},
   volume={70},
   date={2018},
   number={5},
   pages={1076--1095},
   issn={0008-414X},
}

\bib{me:basis}{unpublished}{
   author={Martin, Kimball},
   title={The basis problem revisited},
   note={Trans. Amer. Math. Soc., to appear, \url{http://arxiv.org/abs/arXiv:1804.04234}},  
}

\bib{me:maeda}{unpublished}{
   author={Martin, Kimball},
   title={An on-average Maeda-type conjecture in the level aspect},
   note={\url{http://arxiv.org/abs/1912.06911}},
}

\bib{michel}{article}{
   author={Michel, P.},
   title={The subconvexity problem for Rankin-Selberg $L$-functions and
   equidistribution of Heegner points},
   journal={Ann. of Math. (2)},
   volume={160},
   date={2004},
   number={1},
   pages={185--236},
   issn={0003-486X},
}

\bib{michel-venkatesh}{article}{
   author={Michel, Philippe},
   author={Venkatesh, Akshay},
   title={Heegner points and non-vanishing of Rankin/Selberg $L$-functions},
   conference={
      title={Analytic number theory},
   },
   book={
      series={Clay Math. Proc.},
      volume={7},
      publisher={Amer. Math. Soc., Providence, RI},
   },
   date={2007},
   pages={169--183},
}

\bib{sage}{manual}{
      author={Developers, The~Sage},
       title={{S}agemath, the {S}age {M}athematics {S}oftware {S}ystem
  ({V}ersion 8.9)},
        date={2019},
        label={Sage},
        note={{\tt https://www.sagemath.org}},
}

\bib{tunnell}{article}{
   author={Tunnell, Jerrold B.},
   title={Local $\epsilon $-factors and characters of ${\rm GL}(2)$},
   journal={Amer. J. Math.},
   volume={105},
   date={1983},
   number={6},
   pages={1277--1307},
   issn={0002-9327},
}

\bib{wald}{article}{
   author={Waldspurger, J.-L.},
   title={Sur les valeurs de certaines fonctions $L$ automorphes en leur
   centre de sym\'{e}trie},
   language={French},
   journal={Compositio Math.},
   volume={54},
   date={1985},
   number={2},
   pages={173--242},
   issn={0010-437X},
}

\bib{weinberger}{article}{
   author={Weinberger, P. J.},
   title={Exponents of the class groups of complex quadratic fields},
   journal={Acta Arith.},
   volume={22},
   date={1973},
   pages={117--124},
   issn={0065-1036},
}

\bib{jordan-thesis}{thesis}{
  author={Wiebe, Jordan},
   title={Arithmetic in Quaternion Algebras and Quaternionic Modular Forms},
   note={PhD Thesis, University of Oklahoma},
   year={2019}
}

\end{biblist}
\end{bibdiv}

\end{document}